\newtheorem{theorem}{Theorem}[section]
\newtheorem{lemma}[theorem]{Lemma}
\newtheorem{proposition}[theorem]{Proposition}
\newtheorem{definition}[theorem]{Definition}
\newtheorem{corollary}[theorem]{Corollary}
\newtheorem{remark}[theorem]{Remark}
\newcommand{\mods}[1]{\textcolor{black}{#1}\index {#1}}
\def \N {{\mathbb N}}
\def \R {{\mathbb R}}
\newcommand{\norm}[1]{\left\|#1 \right\|}
\newcommand{\rp}{\mathbb{P}}
\DeclareMathOperator*{\argmax}{arg\,max}
\def \sign{{\rm sign}}
\newcommand{\var}{{\rm Var}}
\newcommand{\gep}{\epsilon}
\newcommand{\E}{\mathbb{E}}
\newcommand{\uf}{u}
\newcommand{\e}{{\rm e}}
\newcommand{\F}{\mathcal{F}}
\newcommand{\X}{\mathcal{X}}
\definecolor{darkred}{rgb}{.7,0,0}
\definecolor{darkgreen}{rgb}{0,0.5,0}
\definecolor{darkblue}{rgb}{0,0,0.7}
\newcommand{\rkhs}{H}
\newcommand\pe{\mu}
\newcommand\sh{\mathcal{Q}}
\newcommand\om{\mathcal{Z}}
\newcommand\f{\bar{f}}
\newcommand\g{\bar{g}}
\newcommand\gammal{\bar\gamma}
\newcommand\varphil{\bar\varphi}
\newcommand\sha{\mathcal{Q}_\alpha}
\newcommand\oma{\mathcal{Z}_\alpha}
\newcommand\pel{\bar{\mu}}
\newcommand\shl{\bar{\mathcal{Q}}_\alpha}
\newcommand\oml{\bar{\mathcal{Z}}_\alpha}
\newcommand\ms{T}
\newcommand\mss{\mathcal{T}}
\newcommand\dens{\pi}
\begin{document}

\begin{frontmatter}
\title{Rates of contraction of posterior distributions based on $\lowercase{p}$-exponential priors}
\runtitle{Posterior contraction for $\lowercase{p}$-exponential  priors}

\begin{aug}

\author[a]{\fnms{Sergios} \snm{Agapiou}\ead[label=e1]{agapiou.sergios@ucy.ac.cy}},
\author[b]{\fnms{Masoumeh} \snm{Dashti}\ead[label=e2]{m.dashti@sussex.ac.uk}}
\and
\author[c]{\fnms{Tapio} \snm{Helin}\ead[label=e3]{tapio.helin@lut.fi}}

\address[a]{Department of Mathematics and Statistics, University of Cyprus.\\ \printead{e1}}
\address[b]{Department of Mathematics, University of Sussex.\\\printead{e2}}
\address[c]{School of Engineering Science, LUT University.\\ \printead{e3}}

\end{aug}
\runauthor{S. Agapiou, M. Dashti and T. Helin}

\begin{abstract}
We consider a family of infinite dimensional product measures with tails between Gaussian and exponential, which we call $p$-exponential measures. We study their measure-theoretic properties and in particular their concentration. Our findings are used to
develop a general contraction theory of posterior distributions on nonparametric models with $p$-exponential priors in separable Banach parameter spaces. 
Our approach builds on the general contraction theory for Gaussian process priors in \cite{VZ08}, namely we use prior concentration to verify prior mass and entropy conditions sufficient for posterior contraction. 
However, the specific concentration properties of $p$-exponential priors lead to a more complex entropy bound which can influence negatively the obtained rate of contraction, depending on the topology of the parameter space. Subject to the more complex entropy bound, we show that the rate of contraction depends on the position of the true parameter relative to a certain Banach space associated to $p$-exponential measures and on the small ball probabilities of these measures. \mods{For example, we apply our theory in the white noise model under Besov regularity of the truth and obtain minimax rates of contraction using (rescaled) $\alpha$-regular $p$-exponential priors. In particular, our results suggest that when interested in spatially inhomogeneous unknown functions, in terms of posterior contraction, it is preferable to use Laplace rather than Gaussian priors.} \end{abstract}

\begin{keyword}[class=MSC]
\kwd[Primary ]{62G20}
\kwd[; secondary ]{62G05}
\kwd{60G50}
\end{keyword}

\begin{keyword}
\kwd{Bayesian nonparametric inference}
\kwd{non-Gaussian priors}
\kwd{concentration of measure}
\end{keyword}
\end{frontmatter}

\section{Introduction}\label{sec:intro}
Gaussian processes are routinely used as priors in many nonparametric inference problems, for example in spline smoothing  \cite{KW70}, density estimation \cite{PL88}, nonparametric regression \cite{SKW99}, inverse problems \cite{stuart} and drift estimation of diffusions \cite{PPRS12}. At the same time, there is a growing number of problems 
for which it is preferable to utilize heavier-tailed priors, while maintaining the favourable convexity properties offered by the Gaussian distribution. 
For example, priors constructed using infinite products of Laplace distributions are extensively used in the literature of Bayesian
inverse problems in the form of Besov-space priors with integrability parameter $p = 1$  \cite{LSS09, DS15, KLNS12, HB15, ABDH18}.
Such priors on the one hand have attractive sparsity-promoting properties at the level of maximum a posteriori estimates \cite{KLNS12,ABDH18}, and on the other hand are logarithmically concave, thus computationally and analytically tractable. Besov-space priors are defined through expansions in a wavelet basis and for $p=1$ use  $\ell_1$-type penalization on the corresponding coefficients, an idea widely-used in the statistical literature  \cite{rev5, rev6, rev7, rev8,rev9,rev10}.


The study of the asymptotic performance of posterior distributions in the infinitely-informative data-limit, under the frequentist assumption that the available data is generated from an underlying fixed value of the unknown, has received great attention in the last two decades. In particular, there has been enormous progress in the study of rates of posterior contraction, that is the concentration rates of posterior distributions around the underlying value of the unknown. The works of Ghosal and van der Vaart \cite{GGV00} and Shen and Wasserman \cite{SW01} for independent and identically distributed (i.i.d.) observations, together with the work of Ghosal and van der Vaart \cite{GV07} for non-i.i.d. observations, paved the way for a comprehensive theory for rates of posterior contraction under general assumptions on the prior and model. 

For Gaussian priors, posterior contraction has been vigorously studied aided by the available very deep understanding of Gaussian processes; see for example \cite{VZrkhs} for a presentation of the relevant elements of Gaussian process theory. Of great importance in this context, has been the work of van der Vaart and van Zanten \cite{VZ08}, who studied general posterior contraction based on the concentration properties of the Gaussian prior. In particular, they showed that the rate of contraction depends on the position of the true parameter underlying the data relative to the reproducing kernel Hilbert space and the centered small ball probabilities of the Gaussian prior. An incomplete list of other contributions which advanced the theory of posterior contraction  under Gaussian priors in several models, often using mixtures of Gaussian processes to achieve adaptation, includes \cite{BG03, IC08, GN11, KVZ12, PSZ13, SVZ13, VZ07, VZ09,  LZ00}. See also the recent books \cite{GV17, GN16}.

On the contrary, the frequentist asymptotic performance of posterior distributions arising from infinite-dimensional Laplace-type priors is much less understood. In particular, there is no general theory for posterior contraction and the only applicable contraction result we are aware of, refers to undersmoothing product priors in the white noise model \cite[Corollary 3]{CN13}. Of some relevance are existing posterior contraction results under sieve priors, which include randomly truncated products of exponential distributions \cite{AGR13, KR13}. For such priors a mechanism for choosing the truncation point is necessary either using a hyperprior or with an empirical Bayes procedure.  \mods{Also relevant, is the result of \cite[Section 3]{CSV15} for the independent and identically distributed product Laplace prior in the sparse Gaussian sequence model setting.}

In this work, we consider a class of infinite-dimensional priors spanning between Gaussian and Laplace product priors. We call such priors \emph{$p$-exponential}, with $p\in[1,2]$ reflecting the tail behaviour, where $p=2$ corresponds to Gaussian and $p=1$ to exponential tails. {Our aim is twofold: first, to develop the relevant measure theory for these priors and to study their concentration properties and second, to study posterior contraction for general models based on prior concentration, analogously to the Gaussian contraction theory in \cite{VZ08}. }


\subsection{General posterior contraction theory}
Consider the problem of inferring an unknown parameter $\theta\in\Theta$ from observations $X^{(n)}$ drawn from distributions $P^{(n)}_\theta$, where $n\to\infty$ corresponds to the infinitely-informative data-limit. We put a prior $\Pi$ on $\theta$ and aim to study the frequentist asymptotic properties of the resulting posterior distribution on $\theta$ after observing $X^{(n)}$, $\Pi_n(\cdot|X^{(n)})$. In particular, we make the frequentist assumption that the available observations have been generated from a fixed underlying true parameter $\theta_0\in\Theta$, and we are interested in investigating the concentration rate of the posterior distribution around the truth in the limit $n\to\infty$. We say that the posterior distribution contracts with a rate $\epsilon_n$ at $\theta_0$ with respect to a metric $d$ on $\Theta$, if $\Pi_n(\theta: d(\theta,\theta_0)\geq M_n\epsilon_n|X^{(n)})\to0$ in $P^{(n)}_{\theta_0}$-probability, for every $M_n\to\infty$.

Posterior contraction in this general-prior and general-model setup, has been studied by Ghosal and van der Vaart in \cite{GV07}. 
Given a model and distance $d$, assuming that there exist exponentially powerful tests for separating $\theta_0$ from $d$-balls at a certain distance from it, 
they derived conditions on the prior securing that an $\epsilon_n$ is a rate of contraction around $\theta_0$ with respect to $d$: the prior needs to put sufficient mass around the true $\theta_0$ and almost all its mass on sets of bounded complexity. These conditions are expressed via norms and discrepancies which are relevant to the statistical setting of interest. In particular, they involve both neighbourhoods of $\theta_0$ expressed via the metric $d$, as well as neighbourhoods of $P^{(n)}_{\theta_0}$ expressed via Kullback-Leibler divergence and variations. For a comprehensive and up to date treatment see \cite[Chapter 8]{GV17}.

\subsection{Gaussian concentration and posterior contraction}\label{ssec:gauss}
We briefly describe the posterior contraction theory for Gaussian priors of van der Vaart and van Zanten \cite{VZ08}, which relies on a good understanding of the concentration properties of Gaussian measures; see also \cite[Chapter 11]{GV17}.

Let  $(X, \norm{\cdot})$ be a  separable Banach space and let $\mu$ be a centered Gaussian prior in $X$. Denote by $\rkhs$ the reproducing kernel Hilbert space (RKHS) of $\mu$, with corresponding norm $\norm{\cdot}_\rkhs$. Moreover, denote by $B_X$ the closed unit ball of $X$ centered at the origin. The concentration properties of $\mu$ at a point $w$ in the topological support of $\mu$, ${\rm supp}(\mu)=\overline{H}^{\norm{\cdot}_X}$, were shown to be captured by the concentration function 
\begin{equation}\label{eq:gaussian_conc}
\varphi_w(\epsilon)=\inf_{h\in\rkhs:\norm{h-w}\leq \epsilon}\frac12 \norm{h}^2_H-\log\mu(\epsilon B_X), \;\epsilon>0.
\end{equation}For $w=0$, the first term vanishes and the concentration function measures the probability of centered balls of size $\epsilon$ in $X$. The idea is that for nonzero $w\in {\rm supp}(\mu)$, the concentration function measures the probability of balls of radius $\epsilon$ centered at $w$, with the first term measuring the loss of probability due to shifting
from centered to noncentered balls; this is made precise by the bounds in \cite[Lemma 5.3]{VZrkhs}. 

Using the above interpretation of the concentration function, together with a concentration inequality due to Borell, \cite[Theorem 3.1]{CB75}, van der Vaart and van Zanten showed in \cite[Theorem 2.1]{VZ08} that for a $w_0\in {\rm supp}(\mu)$, if $\epsilon_n$ satisfies
\begin{equation}\label{eq:gcon}
\varphi_{w_0}(\epsilon_n)\leq n\epsilon_n^2,
\end{equation} then the prior puts a certain minimum mass in $\epsilon_n$ balls in $X$ around the $w_0$  and it is possible to find $\Theta_n\subset X$ which contains the bulk of the prior mass and has exponentially bounded complexity. 
These assertions point to the conditions of general-model general-prior results discussed in the previous subsection, see for example \cite[Theorem 8.9 and Theorem 8.19]{GV17}. However unlike the conditions of these general results which involve statistically relevant norms and discrepancies, the  assertions of \cite[Theorem 2.1]{VZ08}  are expressed purely in the Banach space norm. To bridge this gap and indeed prove that $\epsilon_n$ is a posterior contraction rate in specific statistical settings, one needs to relate the statistically relevant quantities appearing in general-model general-prior results to the Banach space norm.

In a range of models, this reconciliatory work has been done in \cite{GV07} in the general-prior context, and there exist general-prior contraction theorems with assumptions purely expressed in the Banach space norm \cite{GV07}; for example see \cite[Theorem 8.31]{GV17} in the white noise model, or \cite[Theorem 8.26]{GV17} in the normal fixed-design regression setting. 
In other models such as density estimation or nonparametric binary classification, the reconciliatory work has been done in the context of Gaussian priors in \cite{VZ08}, see \cite[Lemmas 3.1 and 3.2]{VZ08} respectively, and no general-prior theorems were explicitly formulated. 
We stress that the reconciliatory work for these models is not explicit to Gaussian priors, thus the proofs of all the Gaussian contraction results found in \cite[Section 3]{VZ08}, can be easily used to get contraction results for priors for which analogous results to \cite[Theorem 2.1]{VZ08} hold. 

\subsection{Our contribution}
In the present paper we consider parameter spaces $X$ which are separable Banach and which possess a Schauder basis. We use the Schauder basis to construct $p$-exponential measures in $X$, by identifying them to infinite products of independent univariate $p$-exponential distributions. Our main contribution is that we generalize the aforementioned Gaussian general contraction theorem \cite[Theorem 2.1]{VZ08} to $p$-exponential measures, and to achieve this we develop the necessary concentration theory for $p$-exponential measures. The obtained general contraction result enables the study of contraction rates of posterior distributions based on $p$-exponential priors, in a range of standard nonparametric statistical models. A brief summary of the paper is as follows.

In Section \ref{sec:props}, we introduce $p$-exponential measures in $X$ 
and study their properties relating to convexity, equivalence and singularity under translations, topological support and ultimately concentration. 
We find that the concentration of a $p$-exponential measure at a point $w$ in its support, depends on the position of $w$ relative to a Banach space, rather than relative to a Hilbert space as was the case for Gaussian measures. We define the corresponding concentration function $\varphi_w(\cdot)$ and show in Theorem \ref{thm:cf} that it has a similar interpretation to the Gaussian concentration function. In Proposition \ref{p:tal}, we  derive a concentration inequality for $p$-exponential measures, which follows from Talagrand's work in \cite{TA94} and, although substantially more intricate, is analogous to the aforementioned Gaussian concentration inequality \cite[Theorem 3.1]{CB75} used for studying contraction in \cite{VZ08}. 

In Section \ref{sec:main} we use the interpretation of the concentration function, together with the available concentration inequality to generalize the Gaussian contraction result \cite[Theorem 2.1]{VZ08} to $p$-exponential measures in Theorem \ref{t:main}, which is the main result of this paper. 
Since the concentration properties of $p$-exponential measures are more intricate, we get a more complicated complexity bound compared to the Gaussian case. 

In Section \ref{sec:mod}, we present posterior contraction results for general $p$-exponential measures in two standard statistical models: the white noise model and density estimation. These results  follow immediately from Theorem \ref{t:main}, as discussed at the end of the last subsection. 

\mods{In Section \ref{sec:l2} we consider $\alpha$-regular $p$-exponential priors in separable Hilbert spaces, study bounds on the corresponding concentration function for Besov-type regularity of the truth and compute posterior contraction rates in the white noise model, with $L_2$ loss. In this case, the complexity bound in Theorem \ref{t:main}, which is more complicated for $p$-exponential priors compared to Gaussian priors, does not affect the rates. Our bounds are particularly interesting for Besov spaces of spatially inhomogeneous functions, that is for Besov integrability parameter $q<2$. In this case, Gaussian priors appear to be suboptimal, more specifically to be limited by the minimax rate over linear estimators, which is slower than the minimax rate \cite[Theorem 1]{DJ98}.  On the other hand, $p$-exponential priors with $p<2$ can do better than the linear minimax rate, see Theorem \ref{thm:wn2} and Remark \ref{rem:l2ratesqle2}. Furthermore, we can achieve the minimax rate using {rescaled} undersmoothing $p$-exponential priors for $p=q$, or the minimax rate up to lower order logarithmic terms using rescaled undersmoothing $p$-exponential priors with $p<q$; see Theorem \ref{thm:wn3} and Remark \ref{rem:rem3}. To our knowledge, this is the first occurrence of a prior achieving the minimax rate over such Besov spaces in the literature. Although we only show upper bounds on the rate of contraction, our results indicate that when interested in spatially inhomogeneous unknowns, in terms of posterior contraction rates, it is beneficial to use Laplace rather than Gaussian priors.} 

In Section \ref{sec:l8} we consider $\alpha$-regular $p$-exponential priors constructed via wavelet expansions in the space of continuous functions on the unit interval, $C[0,1]$, and study bounds on the corresponding concentration function in the supremum norm, under H\"older-type regularity of the truth. To this end, we prove new centered small ball probability bounds in the supremum norm for $p$-exponential measures, see Proposition \ref{prop:supnorm}. We then compute posterior contraction rates in density estimation, with Hellinger-distance loss. In this case, the rates are affected by the more complicated complexity bound in Theorem \ref{t:main} and appear to be suboptimal, see Theorem \ref{thm:decontr}.

\mods{The proofs of our results are contained in the supplementary material below, together with some complementary technical results and discussions.}

\subsection{Notation} We denote by $\R^\infty$ the space of all real sequences and by $\mathcal{B}(\R^\infty)$ the Borel $\sigma$-algebra with respect to the product topology. We denote by $\ell_p$ the space of $p$-summable real sequences. 
The space of square integrable real functions on the unit interval is denoted by $L_2[0,1]$, while $C[0,1]$ is the space of continuous real functions on the unit interval with the supremum norm. For $s>0$, we use $C^s=C^s[0,1]$ to denote the space of $s$-H\"older real functions on the unit interval . For a normed space $(Y,\norm{\cdot}_Y)$, we denote by $B_Y$ the closed unit ball in $Y$. The notation $N(\epsilon, A, d)$ is used for the $\epsilon$-\emph{covering number} of a subset $A$ of a metric space with metric $d$, that is the minimum number of balls of radius $\epsilon$ with respect to $d$ which are needed to cover the set $A$. For two positive sequences $(a_n), (b_n)$, $a_n\asymp b_n$ means $a_n/b_n$ is bounded away from zero and infinity, while $a_n\lesssim b_n$ means that $a_n/b_n$ is bounded.
\section{$p$-exponential measures and their properties}\label{sec:props}
In this section we introduce $p$-exponential measures and study some of their properties. In particular, we discuss their convexity, 
 behaviour under translations, topological support and  concentration properties. 

\subsection{$p$-exponential measures}\label{ssec:defn}
\begin{definition}\label{def:pexp}
Let $\gamma=(\gamma_\ell)_{\ell\in \N}$ be a deterministic decaying sequence of positive real numbers and let $\xi_\ell, \,\ell\in\N$, be independent and identically distributed real random variables with probability density function $f_p(x)\propto\exp(-\frac{|x|^p}p)$, $x\in\R$ for $p\in[1,2]$. We define the probability measure $\pe$ on the measurable space  $(\R^\infty, \mathcal{B}(\R^\infty))$ to be the law of the sequence $(\gamma_\ell \xi_\ell)_{\ell\in\N}$ and call it a \emph{$p$-exponential measure with scaling sequence $\gamma$}. 
\end{definition}

In the following we will often suppress the dependence on $\gamma$ and call $\pe$ a $p$-exponential measure. For $p=1$ and $p=2$ we get centered Laplace and centered  Gaussian measures respectively, both in sequence space. 
While we restrict $p$ between 1 and 2,
        many of the results in this section as for example the ones in the following subsection on convexity, clearly hold in greater generality and in particular for $p\geq1$. However, our treatment on the concentration of $p$-exponential measures in Subsection \ref{ssec:conc}, is explicit to $p\in[1,2]$.  
 
 Depending on the decay properties of $\gamma$, draws from $\pe$ almost surely belong to certain subspaces of $\R^\infty$. 
For example, $\gamma\in\ell_2$ if and only if $\pe(\ell_2)=1$, see \cite[Lemma S.M.1.2]{APSS17}. Lemma \ref{lem:bessup} in Section \ref{sec:l2} below studies Besov-type regularity of $\pe$, for certain choices of the scaling sequence $\gamma$; this result includes Sobolev-type regularity as a special case.

Any Gaussian random element in a separable Banach space can be identified with a Gaussian product measure as above with $p=2$, for example using the Karhunen-Loeve expansion  \cite[Theorem 2.6.10]{GN16}. Likewise, a $p$-exponential measure can be identified naturally with a measure on a separable Banach space $X$, provided $X$ possesses a Schauder basis, which can be normalized or not. 

\begin{definition}
\mods{Let $(X, \norm{\cdot}_X)$ be a separable Banach space. A Schauder basis is a sequence $\{\psi_\ell\}\subset X$, such that for every $u\in X$, there exists a unique real sequence $(u_\ell)_{\ell\in\N}$, so that \[u=\sum_{\ell=1}^\infty u_\ell\psi_\ell,\] where the convergence is with respect to $\norm{\cdot}_X$.}\end{definition}
For example, if $\gamma\in \ell_2$,  a $p$-exponential measure can be identified with a measure on a subspace of the space of square integrable functions on the unit interval, $X=L_2[0,1]$, via the random series expansion  \begin{equation}\label{eq:kl}u(x)=\sum_{\ell=1}^\infty \gamma_\ell \xi_\ell\psi_\ell(x),\end{equation}
where $\{\psi_\ell\}$ is an orthonormal basis in $L_2[0,1]$. It can also be identified with a measure on the space of continuous functions on the unit interval, $X=C[0,1]$, 
{using a similar random series expansion, where $\{\psi_\ell\}$ is a Schauder basis in $C[0,1]$; see Section \ref{sec:l8} below.}

In the general  separable Banach space setting, we also have that depending on the speed of decay of the scaling sequence $\gamma$, draws from a $p$-exponential measure almost surely belong to subspaces of $X$. If $X$ is a function-space, these subspaces correspond to a form of  higher regularity. We stress here, that such function-space regularity is not solely linked to the speed of decay of $\gamma$, but also depends on the scaling and regularity of the Schauder basis $\{\psi_\ell\}$. For example, one can study the H\"older regularity of draws using the Kolmogorov Continuity Test. See \cite[Corollary 5]{DS15} for a result under general conditions on the Schauder basis and scaling sequence, or Proposition \ref{prop:hol} in Section \ref{sec:l8} below for a result under more specific conditions.

While developing our posterior contraction theory for $p$-exponential priors below, we will use the sequence space or the general separable Banach space representation of the measure $\pe$ interchangeably. The particular random series expansion representation, and specifically the choice of the Schauder basis, will become relevant through the concentration function when actually computing the contraction rate in specific settings with specific priors in Sections \ref{sec:l2} and \ref{sec:l8}.

\subsection{Convexity}

We next study the convexity properties of $p$-exponential measures. The convexity of measures in infinite dimensional spaces has been extensively studied in \cite{CB74}. 
\begin{proposition}\label{prop:logconc}
A $p$-exponential measure $\pe$ is logarithmically-concave. That is, for any measurable sets $A, B\in\mathcal{B}(\R^\infty)$ and any $s\in[0,1]$ it holds \[\pe(sA+(1-s)B)\geq \pe(A)^s\cdot\pe(B)^{1-s}.\]
\end{proposition}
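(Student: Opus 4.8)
The plan is to reduce the infinite-dimensional statement to the well-understood finite-dimensional case and then pass to the limit. Recall that a $p$-exponential measure $\pe$ is, by Definition \ref{def:pexp}, the law on $(\R^\infty,\mathcal{B}(\R^\infty))$ of the sequence $(\gamma_\ell\xi_\ell)_{\ell\in\N}$ with $\xi_\ell$ i.i.d.\ having density $f_p(x)\propto\exp(-|x|^p/p)$. For $p\in[1,\infty)$ the function $x\mapsto |x|^p/p$ is convex on $\R$, hence $f_p$ is a log-concave density on $\R$; consequently, for any finite $d$, the product density on $\R^d$ given by $x\mapsto\prod_{\ell=1}^d f_p(x_\ell/\gamma_\ell)/\gamma_\ell$ is log-concave (its negative logarithm is a sum of convex functions plus a constant).

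First I would invoke the classical Pr\'ekopa--Leindler inequality (equivalently, the Borell--Brascamp--Lieb theorem, or \cite{CB74,CB75}) which states that a measure on $\R^d$ with a log-concave density is logarithmically concave in the sense of the displayed inequality: for Borel $A,B\subset\R^d$ and $s\in[0,1]$, $\nu(sA+(1-s)B)\geq \nu(A)^s\nu(B)^{1-s}$. Thus the finite-dimensional marginals $\pe_d := \pe\circ P_d^{-1}$ of $\pe$, where $P_d:\R^\infty\to\R^d$ is the projection onto the first $d$ coordinates, are each log-concave on $\R^d$.

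Next I would lift this to $\R^\infty$. The key point is a measure-theoretic approximation argument: the product $\sigma$-algebra $\mathcal{B}(\R^\infty)$ is generated by the algebra of cylinder sets, so for measurable $A,B$ and any $\epsilon>0$ one can find finite $d$ and cylinder sets $\tilde A = P_d^{-1}(A_d)$, $\tilde B = P_d^{-1}(B_d)$ (with $A_d,B_d\subset\R^d$ Borel) such that $\pe(A\triangle\tilde A)<\epsilon$ and $\pe(B\triangle\tilde B)<\epsilon$. For cylinder sets based on the first $d$ coordinates the Minkowski sum satisfies $s\tilde A + (1-s)\tilde B = P_d^{-1}(sA_d+(1-s)B_d)$ (here I should be slightly careful: this identity holds because the cylinders are unrestricted in coordinates $>d$, so the sum in those coordinates is all of $\R$), and hence $\pe(s\tilde A+(1-s)\tilde B) = \pe_d(sA_d+(1-s)B_d)\geq \pe_d(A_d)^s\pe_d(B_d)^{1-s} = \pe(\tilde A)^s\pe(\tilde B)^{1-s}$ by the finite-dimensional result. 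One then checks $sA+(1-s)B \supseteq s\tilde A'+(1-s)\tilde B'$ for suitable subsets, or more cleanly uses inner regularity together with the monotonicity $\pe(sA+(1-s)B)\geq \pe(s\tilde A+(1-s)\tilde B) - (\text{error})$, letting $\epsilon\to 0$. The main obstacle I anticipate is precisely this last bookkeeping: the Minkowski sum operation does not interact straightforwardly with set differences (the map $(A,B)\mapsto sA+(1-s)B$ is not measure-continuous in general, and $s(A\triangle\tilde A)+(1-s)B$ is not controlled by $\pe(A\triangle\tilde A)$), so one cannot simply approximate $A,B$ by cylinders and take limits naively. The clean fix is to restrict attention to \emph{closed} (or compact) sets $A,B$ using inner regularity of $\pe$ on the Polish space $\R^\infty$: for compact $A,B$ one has $s\tilde A + (1-s)\tilde B \to sA+(1-s)B$ in an appropriate sense as $d\to\infty$ where $\tilde A = P_d^{-1}(P_d A)$ decreases to (the closure related to) $A$, and the inequality is stable under such monotone limits; the general Borel case then follows by approximating $A$ and $B$ from inside by compact sets and from outside appropriately, or by a direct application of the known fact that log-concavity of all finite-dimensional projections implies log-concavity of a Radon measure on a locally convex space \cite{CB74}.

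Alternatively, and perhaps more efficiently, I would simply cite \cite[Theorem 2.3 or its consequences]{CB74}: Borell proved that a Radon probability measure on a locally convex space is logarithmically concave if and only if its finite-dimensional projections have log-concave densities (whenever absolutely continuous), and the convolution/product structure is preserved. Since $\pe$ on $\R^\infty$ is Radon (as $\R^\infty$ with the product topology is Polish) and all its finite-dimensional marginals $\pe_d$ have the explicit log-concave densities above, the result is immediate. I would present the proof in the first, self-contained style for clarity, flagging that the reader may also deduce it directly from \cite{CB74}, and I expect the write-up to be short once the cylinder-approximation subtlety is handled via inner regularity on compact sets.
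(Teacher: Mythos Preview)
Your proposal is correct, and your ``alternative'' approach---citing Borell \cite{CB74} directly for the fact that a Radon measure on a locally convex space is log-concave iff all finite-dimensional projections are---is exactly what the paper does: the paper gives no self-contained proof but simply notes the result is straightforward from \cite{CB74} and points to \cite[Lemma 3.4]{ABDH18} for a written-out version.

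Your primary approach (finite-dimensional log-concavity via Pr\'ekopa--Leindler, then lift to $\R^\infty$) is a valid elaboration of what is behind the citation. You correctly identify the only real subtlety, namely that Minkowski sums do not interact well with symmetric-difference approximation, and your proposed fix via inner regularity on compact sets in the Polish space $\R^\infty$ is the standard and correct way to handle it (and is essentially what Borell does). So your self-contained route is sound but adds detail the paper deliberately omits; if you present it, I would recommend keeping it brief and simply remarking at the end that the lifting step is the content of \cite{CB74}, rather than reproving that lifting in full.
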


This is a straightforward result based on \cite{CB74}. A proof, done for a specific type of choice of $\gamma$ without loss of generality, can
be found in \cite[Lemma 3.4]{ABDH18}. Logarithmic concavity is a very strong property which for example implies unimodality, see \cite[Section 2]{ABDH18}. An immediate consequence is the following inequality called Anderson's inequality, implied by  \cite[Theorem 6.1]{CB74}, which holds since we consider centered measures. 

\begin{proposition}\label{prop:anderson}
Let $\pe$ be a $p$-exponential measure. For any closed, symmetric and convex set $A\subset \R^\infty$, we have \[\pe(A+x)\leq \pe(A), \forall x\in \R^\infty.\] 
\end{proposition}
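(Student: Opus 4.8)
The plan is to deduce Anderson's inequality directly from the logarithmic concavity established in Proposition \ref{prop:logconc}, together with the symmetry of the centered $p$-exponential measure $\pe$. The standard argument (as in \cite[Theorem 6.1]{CB74}) proceeds as follows. Fix a closed, symmetric, convex set $A\subset\R^\infty$ and a point $x\in\R^\infty$. Since $A$ is symmetric, $-A=A$, and hence $-(A+x)=A-x$. I would then exploit the elementary set inclusion
\[
A \supseteq \tfrac12(A+x)+\tfrac12(A-x),
\]
which holds because $A$ is convex: any point of the form $\frac12(a_1+x)+\frac12(a_2-x)=\frac12 a_1+\frac12 a_2$ with $a_1,a_2\in A$ lies in $A$. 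Applying the log-concavity inequality of Proposition \ref{prop:logconc} with $s=\tfrac12$, $B_1=A+x$, $B_2=A-x$ gives
\[
\pe(A)\;\geq\;\pe\!\left(\tfrac12(A+x)+\tfrac12(A-x)\right)\;\geq\;\pe(A+x)^{1/2}\,\pe(A-x)^{1/2}.
\]
Finally, because $\pe$ is centered (symmetric, i.e. the pushforward of $\pe$ under $u\mapsto -u$ is $\pe$ itself, which holds since each $\xi_\ell$ has a symmetric density $f_p$ and the sequence is multiplied by the deterministic $\gamma$), one has $\pe(A-x)=\pe(-(A-x))=\pe(A+x)$ because $A$ is symmetric. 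Substituting this into the previous display yields $\pe(A)\geq\pe(A+x)$, which is the claim.

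The only mild subtlety to handle carefully is the measurability and the status of $A\pm x$ as elements of $\mathcal{B}(\R^\infty)$: translates of Borel sets are Borel, so $A+x, A-x\in\mathcal{B}(\R^\infty)$, and the Minkowski combination $\tfrac12(A+x)+\tfrac12(A-x)$ is convex hence measurable when $A$ is (being equal to $A$ itself here, measurability is automatic). Thus all the sets appearing in the chain of inequalities are legitimate arguments for $\pe$. I would also remark that closedness of $A$ is not strictly needed for this particular derivation — convexity and symmetry suffice — but it is stated for consistency with \cite[Theorem 6.1]{CB74}.

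The main obstacle, such as it is, is essentially bookkeeping: verifying the set inclusion $A\supseteq\tfrac12(A+x)+\tfrac12(A-x)$ and the symmetry identity $\pe(A-x)=\pe(A+x)$ precisely, and making sure the $s=\tfrac12$ instance of Proposition \ref{prop:logconc} is applied to measurable sets. There is no real analytic difficulty here; the proposition is a short formal consequence of log-concavity and centering, and indeed the excerpt already signals this by calling it ``an immediate consequence.'' One could alternatively cite \cite[Theorem 6.1]{CB74} directly once log-concavity and symmetry are in hand, but spelling out the three-line argument above makes the paper self-contained.
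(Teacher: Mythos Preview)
Your proof is correct and is exactly the standard derivation of Anderson's inequality from log-concavity and symmetry. The paper does not actually give a proof of this proposition: it simply states that the result is ``implied by \cite[Theorem 6.1]{CB74}, which holds since we consider centered measures,'' so your write-up spells out precisely the argument that the paper leaves to that citation.
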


Logarithmic-concavity also implies  the following zero-one law, see \cite[Theorem 4.1]{CB74}.

\begin{proposition}\label{prop:01}
Let $\pe$ be a $p$-exponential measure. Then for any linear subspace $V\subset \R^\infty$ we have that $\pe(V)=0$ or $1$.
\end{proposition}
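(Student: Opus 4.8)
The plan is to deduce the zero-one law for $p$-exponential measures directly from the general zero-one law for logarithmically concave (more precisely, convex) measures established in \cite[Theorem 4.1]{CB74}, combined with the logarithmic concavity of $\pe$ proved in Proposition \ref{prop:logconc}. The only genuine point requiring care is that $V$ is an arbitrary linear subspace of $\R^\infty$, which need not be closed or Borel measurable, so one has to set up the statement so that the cited theorem actually applies.

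First I would record that by Proposition \ref{prop:logconc}, $\pe$ is logarithmically-concave, hence in particular a convex measure in the sense of Borell, so that \cite[Theorem 4.1]{CB74} is available. That theorem asserts that a convex measure assigns to any measurable linear subspace the value $0$ or $1$. So the first step is to handle measurability: if $V$ is Borel, we are immediately done. If $V$ is not assumed Borel, I would instead invoke the standard fact that $\pe$ is a Radon (tight) Borel measure on $\R^\infty$ with its product topology — indeed $\R^\infty$ is Polish and $\pe$ is an inner-regular probability measure — and then either (i) restrict attention to the case where $V$ is measurable, which is the intended reading in the paper, or (ii) pass to the $\pe$-completion: a linear subspace $V$ that is $\pe$-measurable in the completed $\sigma$-algebra still satisfies $\pe(V)\in\{0,1\}$ by the same argument applied on the completion, since completing a measure preserves logarithmic concavity of the relevant quantities. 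In the write-up I expect the cleanest route is simply to say: ``By Proposition \ref{prop:logconc} and \cite[Theorem 4.1]{CB74}, the claim follows,'' treating $V$ as measurable.

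The substantive mechanism behind \cite[Theorem 4.1]{CB74}, which I would at most sketch, is the following: for a logarithmically concave measure $\pe$ and a linear subspace $V$, one has $\pe(2V) = \pe(V)$ because $2V=V$ as sets (linearity), while log-concavity applied with $A=B=V$ gives $\pe(V) = \pe(\tfrac12 V + \tfrac12 V) \ge \pe(V)^{1/2}\pe(V)^{1/2} = \pe(V)$ — which is vacuous — so the real input is a $0$-$1$ law argument using translation quasi-invariance or the Hewitt–Savage/Kolmogorov-type structure; in Borell's treatment it comes from the fact that $t\mapsto \pe(tV + x)$ for $x\notin V$ must be constant in $t$ by convexity of $\log\pe$, forcing the measure of the (shift-invariant under $V$) set to be extreme. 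Since all of this is encapsulated in the cited theorem, I would not reproduce it.

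The main obstacle, such as it is, is purely bookkeeping about measurability of $V$; there is no analytic difficulty here, and nothing $p$-specific enters beyond log-concavity, consistent with the remark in the text that the convexity results ``clearly hold in greater generality.'' So the proof is essentially a one-line citation: apply \cite[Theorem 4.1]{CB74} to the logarithmically-concave measure $\pe$ of Proposition \ref{prop:logconc}.
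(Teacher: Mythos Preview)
Your proposal is correct and matches the paper's approach exactly: the paper does not give a separate proof but simply states that the zero-one law follows from \cite[Theorem 4.1]{CB74} together with the logarithmic concavity established in Proposition \ref{prop:logconc}. Your discussion of the measurability of $V$ is more careful than anything the paper records, but the core argument is the same one-line citation.
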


\subsection{Absolute continuity}
We next consider the equivalence or singularity of a $p$-exponential measure to its translations.
\begin{definition}For a measure $\nu$ on a measurable space $(\X,\F)$, we define the space of admissible shifts $\sh=\sh(\nu)$ to be the subspace of all translations $h\in\X$ such that $\nu_h(\cdot):=\nu(\cdot-h)$ is equivalent to $\nu$ as measures. 
\end{definition}
The next proposition identifies the space of admissible shifts of the $p$-exponential measure and provides an expression for the Radon-Nikodym derivative of $\pe_h$ with respect to $\pe$, for $h\in\sh(\pe)$. It also shows that the two measures are singular for $h\notin\sh(\pe)$. 
\begin{proposition}
\label{p:shift}
Let $\pe$ be a $p$-exponential measure  and let $h\in\R^\infty$. Then $\pe_h$ and $\pe$ are either equivalent or singular. The space of admissible shifts of $\pe$ is
\[\sh=\sh(\pe)=\{h\in \R^\infty: \sum_{\ell=1}^\infty h_\ell^2\gamma_\ell^{-2}<\infty\}.\] In particular, it is a separable Hilbert space with norm 
\[\norm{h}_{\sh}=\Big(\sum_{\ell=1}^\infty h_\ell^2\gamma_\ell^{-2}\Big)^\frac12, \quad \forall h\in \sh.\]
Furthermore, for $h\in \sh(\pe)$, 
\[\frac{d\pe_h}{d\pe}(u)=\lim_{N\to\infty}\exp\left(\frac1p \sum_{\ell=1}^N\left(\left|\frac{u_\ell}{\gamma_\ell}\right|^p -\left|\frac{h_\ell-u_\ell}{\gamma_\ell}\right|^p\right) \right)\quad\quad in \;\;L^1(\R^\infty,\pe).\]
\end{proposition}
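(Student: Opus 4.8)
The plan is to reduce the infinite-dimensional equivalence/singularity question to the one-dimensional theory of shifts of the densities $f_p$, combined with the Kakutani dichotomy for product measures. Since $\pe$ is, by Definition \ref{def:pexp}, the law of the product of independent coordinates $\gamma_\ell\xi_\ell$ with $\xi_\ell$ having density $f_p$, the shifted measure $\pe_h$ is the law of $(\gamma_\ell\xi_\ell+h_\ell)_\ell$, i.e. the product over $\ell$ of the laws of $\gamma_\ell\xi_\ell+h_\ell$. Each one-dimensional factor is absolutely continuous with respect to the corresponding factor of $\pe$ with Radon–Nikodym derivative $r_\ell(u_\ell)=f_p((u_\ell-h_\ell)/\gamma_\ell)/f_p(u_\ell/\gamma_\ell)=\exp\bigl(\tfrac1p(|u_\ell/\gamma_\ell|^p-|(u_\ell-h_\ell)/\gamma_\ell|^p)\bigr)$. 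Thus $\pe_h\ll\pe$ coordinatewise for every $h\in\R^\infty$, and the product structure lets me invoke Kakutani's theorem: $\pe_h$ and $\pe$ are equivalent if the infinite product of Hellinger affinities $\prod_\ell \rho_\ell$ is positive (equivalently $\sum_\ell(1-\rho_\ell)<\infty$), where $\rho_\ell=\int \sqrt{r_\ell(u_\ell)}\,\pe_\ell(\ud u_\ell)=\int_\R \sqrt{f_p(x-h_\ell/\gamma_\ell)f_p(x)}\,\ud x$, and singular otherwise; in either case the dichotomy is automatic, which gives the first sentence of the statement.

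The next step is to estimate the one-dimensional Hellinger affinity $\rho(t):=\int_\R\sqrt{f_p(x-t)f_p(x)}\,\ud x$ for small $t$ (here $t=h_\ell/\gamma_\ell$), and to show $1-\rho(t)\asymp t^2$ as $t\to 0$. This is where the restriction $p\in[1,2]$ and the explicit form $f_p(x)\propto e^{-|x|^p/p}$ matter: one writes $\sqrt{f_p(x-t)f_p(x)}=c_p\exp\bigl(-\tfrac1{2p}(|x-t|^p+|x|^p)\bigr)$ and Taylor-expands the exponent in $t$. The function $x\mapsto|x|^p$ is $C^1$ with derivative $p\,\sign(x)|x|^{p-1}$ for $p>1$ (and Lipschitz for $p=1$), and one needs a quantitative second-order bound; integrating against the Gaussian-like tail of $f_p$ shows that the first-order term integrates to something of order $t^2$ (by symmetry the genuine first-order contribution cancels) and the remainder is controlled, yielding $1-\rho(h_\ell/\gamma_\ell)\asymp h_\ell^2/\gamma_\ell^2$ uniformly for $h_\ell/\gamma_\ell$ in a bounded set, together with $1-\rho(t)$ bounded away from $0$ for $|t|$ bounded away from $0$. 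Consequently $\sum_\ell(1-\rho_\ell)<\infty$ iff $\sum_\ell h_\ell^2\gamma_\ell^{-2}<\infty$, which identifies $\sh(\pe)=\{h:\sum_\ell h_\ell^2\gamma_\ell^{-2}<\infty\}$; that this is a separable Hilbert space with the stated norm is then immediate, being a weighted $\ell_2$ space.

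Finally, for the Radon–Nikodym derivative formula: since $\pe=\bigotimes_\ell\pe_\ell$ and $\pe_h=\bigotimes_\ell(\pe_h)_\ell$ with coordinatewise densities $r_\ell$, the density of the finite-dimensional marginal on the first $N$ coordinates is $\prod_{\ell=1}^N r_\ell(u_\ell)=\exp\bigl(\tfrac1p\sum_{\ell=1}^N(|u_\ell/\gamma_\ell|^p-|(h_\ell-u_\ell)/\gamma_\ell|^p)\bigr)$ (using $|h_\ell-u_\ell|=|u_\ell-h_\ell|$), and by the martingale structure these finite products $M_N:=\prod_{\ell=1}^N r_\ell$ form a nonnegative martingale under $\pe$ with respect to the filtration generated by the first $N$ coordinates. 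When $h\in\sh(\pe)$ this martingale is uniformly integrable — which follows from the $L^1$-boundedness plus the Kakutani affinity being positive, or directly from showing $(M_N)$ is bounded in $L^{1/2}$-type sense via the affinity computation above — hence $M_N\to \ud\pe_h/\ud\pe$ in $L^1(\pe)$, giving the displayed limit. The main obstacle is the quantitative affinity estimate $1-\rho(t)\asymp t^2$: it requires a careful second-order Taylor analysis of $|x-t|^p+|x|^p$ that is uniform in $x$ against the weight $f_p$, and the non-smoothness of $|\cdot|^p$ at the origin for $p$ close to $1$ (in particular the endpoint $p=1$) needs separate care, e.g. splitting the integral near $x=0$ and using that the singular contribution is of lower order.
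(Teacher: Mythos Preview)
Your approach is correct and follows the same overall architecture as the paper's (Kakutani dichotomy for product measures, then identification of when the Hellinger product is positive, then the martingale/Kakutani form of the Radon--Nikodym derivative), but you take a genuinely different route for the key step of identifying the shift space. You propose to compute the one-dimensional Hellinger affinity $\rho(t)=\int\sqrt{f_p(x-t)f_p(x)}\,\ud x$ directly and establish $1-\rho(t)\asymp t^2$ by a second-order Taylor analysis of $|x-t|^p+|x|^p$, handling the non-smoothness at $p=1$ separately. The paper instead invokes a general result for scaled independent products (Proposition~\ref{p:shiftgen}), which in turn cites \cite{AK03,LAS65}: for independent $Z_j$ with variances $\sigma_j^2$ and Fisher informations $I_j$, singularity of the shift by $\alpha$ holds if $\sum\alpha_j^2\sigma_j^{-2}=\infty$ and only if $\sum\alpha_j^2 I_j=\infty$; since for $\gamma_\ell\xi_\ell$ both $\sigma_\ell^2$ and $I_\ell^{-1}$ are proportional to $\gamma_\ell^2$, the two conditions collapse to $\sum h_\ell^2\gamma_\ell^{-2}=\infty$. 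Your $t^2$-expansion of $1-\rho(t)$ is of course exactly the statement that the Fisher information of $f_p$ is finite and appears as the coefficient, so the two routes are equivalent in content; the paper's version is cleaner and applies verbatim to any coordinate law with finite Fisher information and positive continuous density, while yours is self-contained and explicit but tied to $f_p$ and requires the endpoint care at $p=1$ that you flag. Either is a complete proof.
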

The last result is an immediate application of a more general result valid for scaled independent products of univariate distributions with finite Fisher information and everywhere positive density, see {Proposition \ref{p:shiftgen}} in the supplement below.

Even though the Radon-Nikodym derivative between a centered and a translated $p$-exponential measure involves weighted $\ell_p$-type terms, the space of admissible shifts, even for $p\neq 2$, is a weighted $\ell_2$ space, that is a Hilbert space. 
Furthermore, it is straightforward to check that $\pe(\sh(\pe))=0$. Indeed, for $u$ drawn from a $p$-exponential measure we have $\norm{u}_\sh^2=\sum_{\ell=1}^\infty \xi_\ell^2$ which is almost surely infinite by the law of large numbers. 
 
Motivated by the exponent of the Radon-Nikodym derivative above, we define the following subspace.

\begin{definition}\label{defn:om}
For a $p$-exponential measure $\pe$, we define the separable Banach space \[\om=\om(\pe)=\{h \in\R^\infty: \sum_{\ell=1}^\infty |h_\ell|^p\gamma_\ell^{-p}<\infty\},\]
with norm \[\norm{h}_{\om}=\Big(\sum_{\ell=1}^\infty |h_\ell|^p \gamma_\ell^{-p}\Big)^\frac1p, \quad \forall h\in\om.\]
\end{definition}
The space $\om(\pe)$ is a weighted $\ell_p$ space which, since $p\in[1,2]$ and $\gamma_\ell$ is a decaying sequence, is continuously embedded in $\sh(\pe)$. 
 Clearly we also have that $\pe(\om(\pe))=0$. 

When working in a separable Banach space $X$ possessing a Schauder basis, the subspaces $\sh(\pe)\subset \R^\infty$ and $\om(\pe)\subset \R^\infty$ are naturally identified with subspaces of $X$. If $X$ is a function-space, then $\om$ and $\sh$ correspond to subspaces of functions of higher regularity.  
In the Gaussian case $p=2$, we have that $\sh$ and $\om$  are identified with the RKHS \cite[Section I.6]{GV17}, but in general the two spaces differ and have different roles. 

For Gaussian measures, the RKHS is compactly embedded in any separable Banach space $X$ of full measure, \cite[Proposition 2.6.9]{GN16}. The next proposition generalizes this statement for  $p$-exponential measures. It follows from \cite[Theorem 5.1.6.]{Boga10}, which holds for general Radon measures on locally convex spaces. 

\begin{proposition}\label{prop:comp}
Let $\pe$ be a $p$-exponential measure on a separable Banach space $X$ with a Schauder basis. The space of admissible shifts $\sh(\pe)$ is compactly embedded into $X$. As a consequence, $\om(\pe)$ is also compactly embedded into $X$. 
\end{proposition}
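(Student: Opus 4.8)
The plan is to deduce compactness of the embedding $\sh(\pe)\hookrightarrow X$ directly from \cite[Theorem 5.1.6]{Boga10}, and then derive the compactness of $\om(\pe)\hookrightarrow X$ from the continuous embedding $\om(\pe)\hookrightarrow\sh(\pe)$ already established after Definition \ref{defn:om}. First I would recall the setup: $\pe$ is a Radon measure on the separable Banach space $X$ (every Borel probability measure on a separable Banach space is Radon), and by Proposition \ref{p:shift} the space of admissible shifts $\sh(\pe)$, identified via the Schauder basis with a subspace of $X$, is the Cameron--Martin--type Hilbert space of $\pe$ with norm $\norm{\cdot}_\sh$. The cited theorem of Bogachev states that for a Radon measure on a locally convex space whose topological support is the whole space (or more precisely, under the relevant nondegeneracy), the associated Cameron--Martin space is compactly embedded; since $X$ is a separable Banach space and the support of $\pe$ is all of $X$ when the basis is adapted to the measure (or one simply works in $\overline{\sh}^{\,\norm{\cdot}_X}$), the hypotheses apply and yield that bounded subsets of $(\sh(\pe),\norm{\cdot}_\sh)$ are relatively compact in $(X,\norm{\cdot}_X)$.

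Next I would handle $\om(\pe)$. Since $p\in[1,2]$ and $\gamma=(\gamma_\ell)$ is a positive decaying sequence, for any $h\in\R^\infty$ one has $\sum_\ell h_\ell^2\gamma_\ell^{-2}=\sum_\ell (|h_\ell|^p\gamma_\ell^{-p})^{2/p}\gamma_\ell^{(2/p)p-2}\cdot(\cdots)$; more cleanly, because $\ell_p\subset\ell_2$ with $\norm{\cdot}_{\ell_2}\le\norm{\cdot}_{\ell_p}$ and the weights $\gamma_\ell^{-1}$ only increase with $\ell$, we get $\norm{h}_\sh\le C\norm{h}_{\om}$ for a constant $C$ (in fact $C=1$ after a trivial monotonicity argument on the weights, as noted in the text). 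Hence the identity map $(\om(\pe),\norm{\cdot}_{\om})\to(\sh(\pe),\norm{\cdot}_\sh)$ is continuous, i.e. bounded. Composing with the compact embedding $\sh(\pe)\hookrightarrow X$ gives that the embedding $\om(\pe)\hookrightarrow X$ is a composition of a bounded operator followed by a compact operator, hence compact. Equivalently, the closed unit ball of $\om(\pe)$ is a bounded subset of $\sh(\pe)$, therefore relatively compact in $X$.

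The only genuine point requiring care — and the step I expect to be the main obstacle — is verifying that the hypotheses of \cite[Theorem 5.1.6]{Boga10} are met, in particular the identification of $\sh(\pe)$ with the Cameron--Martin space in the sense of Bogachev's general theory (defined via $\nu$-measurable linear functionals and the covariance operator), and the matching of the norms. This is where one must invoke that $\pe$, being a log-concave Radon measure (Proposition \ref{prop:logconc}) on a separable Banach space, falls within the scope of that theorem, and that the admissible-shift characterization of Proposition \ref{p:shift} coincides with Bogachev's Cameron--Martin space with the same Hilbertian norm $\norm{\cdot}_\sh$. Once this identification is granted, the compactness is immediate from the cited theorem and the rest is the elementary embedding argument above; I would therefore keep the write-up short, citing Proposition \ref{p:shift}, \cite[Theorem 5.1.6]{Boga10}, and the $\om(\pe)\hookrightarrow\sh(\pe)$ remark, and spelling out only the composition-with-a-compact-operator conclusion for $\om(\pe)$.
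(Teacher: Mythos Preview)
Your proposal is correct and follows exactly the paper's approach: the paper simply states that the result follows from \cite[Theorem 5.1.6]{Boga10} for general Radon measures on locally convex spaces, and the consequence for $\om(\pe)$ is immediate from the continuous embedding $\om(\pe)\hookrightarrow\sh(\pe)$ noted after Definition~\ref{defn:om}. Your only addition is a more careful discussion of verifying the hypotheses of Bogachev's theorem, which the paper leaves implicit.
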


\subsection{Support and concentration}\label{ssec:conc}

In this subsection, $(X,\norm{\cdot}_X)$ is a separable Banach space possessing a Schauder basis and $\pe$ is a $p$-exponential measure on $X$, $\pe(X)=1$, defined by randomizing the coefficients of random series expansions in the Schauder basis as explained in Subsection \ref{ssec:defn}. For a Gaussian measure on $X$, it is known that its topological support is the closure of the RKHS in $X$, \cite[Corollary 2.6.17]{GN16}. 
We next show an analogous result for $p$-exponential measures. Since $\gamma$ is a sequence of positive scalings, $p$-exponential measures are non-degenerate, that is their support is the whole space $X$.

\begin{proposition}\label{prop:supp2}
Let $\pe$ be a $p$-exponential measure on $X$. Then
\[X=supp(\pe)=\overline{\sh}^{\norm{\cdot}_X}=\overline{\om}^{\norm{\cdot}_X},\] \mods{where $\overline{\sh}^{\norm{\cdot}_X}, \overline{\om}^{\norm{\cdot}_X}$ denote the closures in the norm $\norm{\cdot}_X$ of the spaces $\sh, \om$, respectively.}
\end{proposition}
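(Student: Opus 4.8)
The plan is to reduce the statement to two ingredients: that $\om(\pe)$, hence $\sh(\pe)$, is dense in $X$, and that $\mathrm{supp}(\pe)$ is invariant under translation by admissible shifts. Throughout I identify a sequence $h\in\R^\infty$ with the formal series $\sum_\ell h_\ell\psi_\ell$ in the Schauder basis $\{\psi_\ell\}$ used to define $\pe$. By Proposition~\ref{prop:comp}, every $h\in\sh(\pe)$, and a fortiori every $h\in\om(\pe)$, genuinely defines an element of $X$, so the closures $\overline{\sh}^{\norm{\cdot}_X}$ and $\overline{\om}^{\norm{\cdot}_X}$ are well defined and contained in $X$; likewise $\mathrm{supp}(\pe)\subseteq X$ because $\pe(X)=1$. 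Thus it suffices to establish the reverse inclusions.

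First I would prove $\overline{\om}^{\norm{\cdot}_X}=X$. The key observation is that every finitely supported sequence lies in $\om(\pe)$, since a finite sum of the terms $|h_\ell|^p\gamma_\ell^{-p}$ is finite; equivalently $\mathrm{span}\{\psi_\ell:\ell\in\N\}\subseteq\om(\pe)\subseteq\sh(\pe)$, where the second inclusion is the continuous embedding recorded after Definition~\ref{defn:om}. By the defining property of a Schauder basis, for each $u\in X$ the partial sums $\sum_{\ell\le N}u_\ell\psi_\ell$ converge to $u$ in $\norm{\cdot}_X$, so $\mathrm{span}\{\psi_\ell\}$ is dense in $X$. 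Hence $X=\overline{\mathrm{span}\{\psi_\ell\}}^{\norm{\cdot}_X}\subseteq\overline{\om}^{\norm{\cdot}_X}\subseteq\overline{\sh}^{\norm{\cdot}_X}\subseteq X$, so both shift spaces are dense in $X$.

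Next I would prove $\mathrm{supp}(\pe)=X$. Since $X$ is a separable Banach space, $\pe$ is a Borel probability measure on a Polish space, so $\mathrm{supp}(\pe)$ is a non-empty closed set; fix $x_0\in\mathrm{supp}(\pe)$. Given $h\in\sh(\pe)$ and $r>0$, the ball $A=\{u\in X:\norm{u-(x_0+h)}_X<r\}$ satisfies $A=B+h$ with $B=\{u:\norm{u-x_0}_X<r\}$, hence $\pe(A)=\pe(B+h)=\pe_{-h}(B)$. Now $\pe(B)>0$ because $x_0\in\mathrm{supp}(\pe)$, and $\pe_{-h}$ is equivalent to $\pe$ by Proposition~\ref{p:shift} (note $-h\in\sh(\pe)$ since $\sh(\pe)$ is a linear subspace), so $\pe(A)=\pe_{-h}(B)>0$. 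Therefore $x_0+h\in\mathrm{supp}(\pe)$ for every $h\in\sh(\pe)$, i.e. $x_0+\sh(\pe)\subseteq\mathrm{supp}(\pe)$; since the support is closed, $x_0+\overline{\sh}^{\norm{\cdot}_X}\subseteq\mathrm{supp}(\pe)$, and combined with $\overline{\sh}^{\norm{\cdot}_X}=X$ this gives $X=x_0+X\subseteq\mathrm{supp}(\pe)\subseteq X$. Chaining the inclusions from the two steps yields $X=\mathrm{supp}(\pe)=\overline{\sh}^{\norm{\cdot}_X}=\overline{\om}^{\norm{\cdot}_X}$.

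I do not expect a serious obstacle: once the measure-theoretic groundwork of Propositions~\ref{p:shift} and \ref{prop:comp} is in place, the only genuine content is that the finite span of the Schauder basis sits inside $\om(\pe)$ and is dense in $X$, and that admissible translations preserve positivity of ball masses. The only point requiring a little care is the bookkeeping of the identification between the $\R^\infty$-subspaces $\sh(\pe),\om(\pe)$ and the corresponding subspaces of $X$, which Proposition~\ref{prop:comp} legitimizes. It is worth noting the contrast with the Gaussian theory on a general Banach space, where one must separately rule out the support being strictly larger than the closure of the Cameron--Martin space: here the non-degeneracy of $\pe$ (all $\gamma_\ell>0$) together with the existence of a Schauder basis already forces $\overline{\sh}^{\norm{\cdot}_X}=\overline{\om}^{\norm{\cdot}_X}=X$, so the trivial inclusion $\mathrm{supp}(\pe)\subseteq X$ closes the argument for free.
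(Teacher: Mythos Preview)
Your proof is correct and follows essentially the same line as the paper's, with one minor simplification worth noting. Where the paper first invokes Anderson's inequality (Proposition~\ref{prop:anderson}) to single out $0\in\mathrm{supp}(\pe)$ before (implicitly) using shift equivalence to deduce $\sh\subset\mathrm{supp}(\pe)$, you instead start from an arbitrary support point $x_0$, use Proposition~\ref{p:shift} to obtain $x_0+\sh\subset\mathrm{supp}(\pe)$, and close with $x_0+X=X$. This bypasses Anderson entirely at no cost; otherwise both arguments rest on the same two observations---density of the finite linear span of the Schauder basis (hence of $\om$ and $\sh$) in $X$, and preservation of positive ball mass under admissible translations.
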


The role of the subspace $\om$ is revealed in the next two results, which study the probability of non-centered balls in $X$ relative to the probability of centered ones, under a $p$-exponential measure. Proposition \ref{prop:anderson}, showed that for fixed radius $\epsilon>0$, there is a loss of probability when shifting from centered  to non-centered balls. In the next proposition we prove a lower bound on the loss of probability when the shift is in the space $\om$. 

\begin{proposition}\label{prop:lbd}
Let $\pe$ be a $p$-exponential measure on  $X$. Then for $h\in \om$ and any $\epsilon>0$, we have 
\[\pe(\epsilon B_X+h)\geq e^{-\frac1p\norm{h}_{\om}^p}\pe(\epsilon B_X).\]
\end{proposition}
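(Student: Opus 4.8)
The plan is to adapt the classical Gaussian argument (shift the centered ball, use the change-of-measure formula, apply Jensen's inequality and a symmetrization step), replacing the Cameron--Martin density by the Radon--Nikodym derivative of Proposition~\ref{p:shift}. The one genuinely new input is an elementary inequality in $\ell_p$ that holds precisely because $p\in[1,2]$, and which replaces the identity $|a+b|^2+|a-b|^2=2(|a|^2+|b|^2)$ used implicitly in the Gaussian case.

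First I would reduce the statement to one about a shifted \emph{centered} ball. Since $\pe$ is symmetric and $B_X=-B_X$, we have $\pe(\epsilon B_X+h)=\pe\big(-(\epsilon B_X+h)\big)=\pe(\epsilon B_X-h)=\pe_h(\epsilon B_X)$. Because $\om$ is continuously embedded in $\sh$ (noted after Definition~\ref{defn:om}), the shift $h$ is admissible, so Proposition~\ref{p:shift} applies. Setting, for $N\in\N$,
\[
g_N(u):=\frac1p\sum_{\ell=1}^N\left(\left|\frac{u_\ell}{\gamma_\ell}\right|^{p}-\left|\frac{h_\ell-u_\ell}{\gamma_\ell}\right|^{p}\right),
\]
Proposition~\ref{p:shift} gives $\frac{d\pe_h}{d\pe}=\lim_{N\to\infty}\e^{g_N}$ in $L^1(\pe)$, whence
\[
\pe(\epsilon B_X+h)=\pe_h(\epsilon B_X)=\int_{\epsilon B_X}\frac{d\pe_h}{d\pe}\,d\pe=\lim_{N\to\infty}\int_{\epsilon B_X}\e^{g_N}\,d\pe .
\]

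Next I would bound $\int_{\epsilon B_X}\e^{g_N}\,d\pe$ from below for each fixed $N$. On $\epsilon B_X$ every coordinate $u_\ell$ lies in a bounded interval (the coordinate functionals associated to a Schauder basis are bounded), so $g_N$ is a bounded function on $\epsilon B_X$; moreover $\pe(\epsilon B_X)>0$ by Proposition~\ref{prop:supp2}. Jensen's inequality for the probability measure $\pe(\epsilon B_X)^{-1}\pe|_{\epsilon B_X}$ and the convex function $\exp$ then yields $\int_{\epsilon B_X}\e^{g_N}\,d\pe\ge\pe(\epsilon B_X)\exp\big(\pe(\epsilon B_X)^{-1}\int_{\epsilon B_X}g_N\,d\pe\big)$. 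To control the exponent I would symmetrize: as $\epsilon B_X$ and $\pe$ are invariant under $u\mapsto-u$,
\[
\int_{\epsilon B_X}g_N\,d\pe=\frac12\int_{\epsilon B_X}\big(g_N(u)+g_N(-u)\big)\,d\pe(u),
\]
\[
g_N(u)+g_N(-u)=\frac1p\sum_{\ell=1}^N\left(2\left|\frac{u_\ell}{\gamma_\ell}\right|^{p}-\left|\frac{h_\ell-u_\ell}{\gamma_\ell}\right|^{p}-\left|\frac{h_\ell+u_\ell}{\gamma_\ell}\right|^{p}\right).
\]
Here I would invoke that for $p\in(0,2]$ and $a,b\in\R$ one has $|a+b|^{p}+|a-b|^{p}\le 2(|a|^{p}+|b|^{p})$: concavity of $t\mapsto t^{p/2}$ on $[0,\infty)$ gives $|a+b|^{p}+|a-b|^{p}\le 2(a^2+b^2)^{p/2}$, and subadditivity of $t\mapsto t^{p/2}$ (valid for $p\le 2$) gives $(a^2+b^2)^{p/2}\le|a|^{p}+|b|^{p}$. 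Applying this with $a=h_\ell/\gamma_\ell$, $b=u_\ell/\gamma_\ell$ bounds the $\ell$-th summand below by $-2|h_\ell/\gamma_\ell|^{p}$, so $g_N(u)+g_N(-u)\ge-\tfrac2p\sum_{\ell=1}^\infty|h_\ell/\gamma_\ell|^{p}=-\tfrac2p\norm{h}_{\om}^{p}$ for every $N$ and $u$. Hence $\int_{\epsilon B_X}g_N\,d\pe\ge-\tfrac1p\norm{h}_{\om}^{p}\,\pe(\epsilon B_X)$, and combining with the Jensen bound, $\int_{\epsilon B_X}\e^{g_N}\,d\pe\ge\pe(\epsilon B_X)\,\e^{-\frac1p\norm{h}_{\om}^{p}}$ for all $N$; letting $N\to\infty$ gives the claim.

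The step I expect to be the main obstacle is making the infinite-dimensional limit honest: the Radon--Nikodym density is only an $L^1(\pe)$-limit of the exponentials $\e^{g_N}$, not a pointwise limit, and indeed $\sum_\ell|u_\ell/\gamma_\ell|^{p}=\infty$ for $\pe$-a.e.\ $u$, so there is no closed-form pointwise exponent to plug into Jensen. The device above sidesteps this by applying Jensen only to the genuinely bounded truncations $g_N$ and by observing that the lower bound on the \emph{symmetrized} truncated exponent is uniform in $N$, so it passes to the limit. (As a sanity check, for $p=2$ the elementary inequality is an equality, the symmetrized exponent equals $-\norm{h}_{\sh}^{2}$ identically, and one recovers the usual Gaussian estimate $\pe(\epsilon B_X+h)\ge\e^{-\frac12\norm{h}_{\sh}^{2}}\pe(\epsilon B_X)$ with $\om=\sh$ the RKHS.)
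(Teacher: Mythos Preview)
Your proof is correct and shares the same skeleton as the paper's: change of measure via Proposition~\ref{p:shift}, symmetrization using $u\mapsto -u$, and the key scalar inequality
\[
|a+b|^{p}+|a-b|^{p}\le 2\big(|a|^{p}+|b|^{p}\big),\qquad p\in[1,2],
\]
which in the paper's notation is exactly $G(x,y)\ge 0$ with $V=|\cdot|^{p}/p$.

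The differences are in presentation rather than substance. First, the paper proves $G\ge 0$ by a longer derivative argument (convexity of $V$, concavity and subadditivity of $V'$ on $[0,\infty)$); your two-line proof via concavity and subadditivity of $t\mapsto t^{p/2}$ is more direct and arguably neater. (The paper's route has the advantage of applying to any symmetric convex $V$ with $V(0)=0$ whose derivative is concave on the positive axis, not only $|\cdot|^{p}/p$.) Second, the paper symmetrizes at the level of the exponentials and uses $\tfrac12(e^{A}+e^{B})\ge e^{(A+B)/2}$ to bound the integrand \emph{pointwise} by $1$ after pulling out the factor $e^{-\frac1p\|h\|_{\om}^{p}}$; you instead apply Jensen to the normalized restriction of $\pe$ to $\epsilon B_X$ and symmetrize the \emph{exponent}. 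Your Jensen step is valid but not needed: the same inequality already gives $\tfrac12\big(e^{g_N(u)}+e^{g_N(-u)}\big)\ge e^{\frac12(g_N(u)+g_N(-u))}\ge e^{-\frac1p\|h\|_{\om}^{p}}$ pointwise, which yields the bound without averaging. Either way, passing to the limit is justified exactly as you say, since $e^{g_N}\to d\pe_h/d\pe$ in $L^1(\pe)$.
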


Our proof relies on certain properties of the function $|\cdot|^p, p\in[1,2]$ appearing in the exponent of the Radon-Nikodym derivative ${d\pe_h}/{d\pe}$ in Proposition \ref{p:shift}, namely its symmetry together with its convexity and the concavity of its derivative on the positive semi-axis. 

{For $p=2$ we recover the Gaussian result \cite[Lemma 5.2]{VZrkhs}. For $p\neq2$, the loss of probability is exponential in the $\om$-norm and not in the Hilbert space norm of the space of admissible shifts $\sh$. As we will see in the next section, this adds a degree of difficulty to the study of posterior contraction for $p$-exponential priors. Due to the form of the Radon-Nikodym derivative in Proposition \ref{p:shift}, the last result is not surprising. In particular, it is consistent with the form of the Onsager-Machlup functional, that is the functional giving the most probable paths,  for Besov-space measures with $p=1$ in \cite[Theorem 3.9]{ABDH18}.}

We next extend the last lower bound to centers that are not necessarily in $\om$, using approximation. We restrict to centers in the topological support of $\pe$, $X$, since otherwise a small enough ball around $w$ has zero probability.
As in the Gaussian case, see \eqref{eq:gaussian_conc} in Subsection \ref{ssec:gauss}, we define the concentration function of a $p$-exponential measure. 

\begin{definition}\label{defn:conc}
Let $w\in X$. 
 We define the concentration function of the $p$-exponential measure $\pe$ on $X$ to be
\begin{equation*}
\varphi_{w}(\epsilon)=\inf_{h\in \om:\norm{h-w}_X\leq\epsilon} \frac1p\norm{h}_{\om}^p-\log \pe(\epsilon B_X).
\end{equation*} 
\end{definition}

The first term relates to approximation of the center $w\in X=\overline{\om}^{\norm{\cdot}_X}=\overline{\sh}^{\norm{\cdot}_X}$ by elements of the space $\om$. Unlike the Gaussian case and consistently with Proposition \ref{prop:supp2}, for $p\neq2$ this approximation does not take place in a Hilbert space. 
For any $w\in X$, since the $\om$-norm is convex and $p$-exponential measures are logarithmically-concave and non-degenerate, the concentration function is a strictly decreasing and convex function on the positive semi-axis. This follows very similarly to the Gaussian case see \cite[Lemma 3]{IC08} or the more readily adaptable \cite[Lemma I.26]{GV17}. In particular, the concentration function is continuous and blows-up as $\epsilon\to0$. Depending on the position of  $w\in X$ relative to the space $\om$, the blow-up rate is determined by the first or second term. For example, if $w\in \om$ the first term remains bounded and only the second term blows-up.

The interpretation of the concentration function is similar to the Gaussian case. For $w=0$, the first term is zero and $\varphi_0(\epsilon)$ measures the probability with respect to $\pe$ of a centered ball of radius $\epsilon$ in $X$. For $w\in X\setminus\{0\}$, the next theorem shows that the concentration function gives a lower bound on the probability of a ball of radius $\epsilon$ in $X$ around $w$, with the first term measuring the loss of probability due to moving the ball away from the origin. 

\begin{theorem}\label{thm:cf}
For any $w\in X$ we have that 
\begin{equation*}
-\log\pe(w+\epsilon B_X)\leq \varphi_w(\epsilon/2), \;\forall \epsilon>0.
\end{equation*}
\end{theorem}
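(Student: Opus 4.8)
The plan is to mimic the classical Gaussian argument (as in \cite[Lemma 5.3]{VZrkhs}), combining the approximation term in the concentration function with the shift inequality of Proposition \ref{prop:lbd}. First I would reduce to the case where the infimum in Definition \ref{defn:conc} is (almost) attained: for a given $w\in X$ and $\epsilon>0$, pick $h\in\om$ with $\norm{h-w}_X\leq \epsilon/2$ and $\frac1p\norm{h}_\om^p\leq \varphi_w(\epsilon/2)+\log\pe(\tfrac{\epsilon}{2}B_X) + \eta$ for arbitrary $\eta>0$ (and later let $\eta\to0$); if the infimum is $+\infty$ the bound is trivial, so assume it is finite, which in particular forces $w\in\overline{\om}^{\norm{\cdot}_X}=X$ so that such $h$ exists.

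Next I would use the triangle inequality to nest balls. Since $\norm{h-w}_X\leq\epsilon/2$, any $u$ with $\norm{u-h}_X\leq\epsilon/2$ satisfies $\norm{u-w}_X\leq\epsilon$, hence
\[
\pe(w+\epsilon B_X)\;\geq\;\pe\bigl(h+\tfrac{\epsilon}{2}B_X\bigr).
\]
Now apply Proposition \ref{prop:lbd} with radius $\epsilon/2$ and shift $h\in\om$:
\[
\pe\bigl(h+\tfrac{\epsilon}{2}B_X\bigr)\;\geq\;e^{-\frac1p\norm{h}_\om^p}\,\pe\bigl(\tfrac{\epsilon}{2}B_X\bigr).
\]
Taking $-\log$ of the chain and substituting the bound on $\frac1p\norm{h}_\om^p$ gives
\[
-\log\pe(w+\epsilon B_X)\;\leq\;\tfrac1p\norm{h}_\om^p-\log\pe\bigl(\tfrac{\epsilon}{2}B_X\bigr)\;\leq\;\varphi_w(\epsilon/2)+\eta,
\]
and letting $\eta\to0$ finishes the proof.

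The argument is short and the only subtlety is bookkeeping: making sure the radius in Proposition \ref{prop:lbd} matches the $\epsilon/2$ appearing in $\varphi_w(\epsilon/2)$, and handling the case where the infimum defining $\varphi_w(\epsilon/2)$ is not attained (hence the $\eta$-approximation) or is infinite (trivial). I do not expect any real obstacle here — Proposition \ref{prop:lbd} does all the heavy lifting, and this is precisely the easy "lower bound half" of the concentration-function interpretation; the genuinely hard analytic work (Talagrand-type concentration, the matching upper bound / small-ball estimates) lives in Proposition \ref{p:tal} and the later sections, not in this theorem.
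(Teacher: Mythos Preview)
Your proposal is correct and follows essentially the same argument as the paper's proof: pick an approximating $h\in\om$, use the triangle inequality to nest $h+\tfrac{\epsilon}{2}B_X\subset w+\epsilon B_X$, apply Proposition~\ref{prop:lbd}, and optimize over $h$. The only cosmetic difference is that the paper works with radius $2\epsilon$ on the left and $\epsilon$ on the right (and simply says ``optimize over $h$'' rather than introducing the explicit $\eta$-slack), but the content is identical.
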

The proof of the last theorem is very similar to the first part of the proof of \cite[Lemma 5.3]{VZrkhs}. It follows from Proposition \ref{prop:lbd} using the triangle inequality and approximation of $w\in X$ in $\om$.
\begin{remark}\label{rem:conc}
In the Gaussian case, the concentration function yields both an upper and a lower bound on the probability of small balls around a $w\in X$ \cite[Lemma 5.3]{VZrkhs}. While
the last theorem achieves a lower bound, it would be interesting to also prove an upper bound in the $p$-exponential case. However, the lack of inner product structure in the Radon-Nikodym derivative between a centered $p$-exponential measure and its translation makes this task considerably harder.
\end{remark}

The following inequality generalizes Borell's inequality which studies the concentration of Gaussian measures, \cite[Theorem 3.1]{CB75}. It is based on a sharp two level concentration inequality due to Michel Talagrand \cite[Theorem 2.4]{TA94}.

\begin{proposition}\label{p:tal}
Let $\pe$ be a $p$-exponential measure in $X$. Recall that $B_\om$ and  $B_\sh$ denote the closed unit balls in the spaces $\om$ and $\sh$, respectively. Then there exists a constant $K>0$ depending only on $p$, such that for any set $A\in\mathcal{B}(X)$ and any $r>0$ it holds \begin{equation}\label{eq:tal}\pe(A+r^\frac{p}2B_\sh+rB_\om)\geq 1-\frac{1}{\pe(A)}\exp\left(-\frac{r^p}K\right).\end{equation}
\end{proposition}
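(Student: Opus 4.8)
The plan is to deduce Proposition \ref{p:tal} from Talagrand's two-level concentration inequality \cite[Theorem 2.4]{TA94} for product measures, by exploiting the product structure of a $p$-exponential measure. Write $\pe$ as the law of $(\gamma_\ell\xi_\ell)_{\ell\in\N}$ with $\xi_\ell$ i.i.d.\ having density $f_p(x)\propto\exp(-|x|^p/p)$. The key observation is that, after the coordinatewise rescaling $u_\ell\mapsto u_\ell/\gamma_\ell$, the measure $\pe$ becomes the product measure $\nu^{\otimes\N}$ where $\nu$ has density $f_p$ on $\R$, and under this rescaling the $\sh$-ball $r^{p/2}B_\sh$ and the $\om$-ball $rB_\om$ become, respectively, the standard $\ell_2$-ball of radius $r^{p/2}$ and the standard $\ell_p$-ball of radius $r$ in sequence space. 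So the statement is equivalent to a concentration inequality for $\nu^{\otimes\N}$ with enlargements measured simultaneously in the $\ell_2$ and $\ell_p$ metrics.

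Concretely, I would proceed as follows. First I would recall Talagrand's result precisely: for a product probability space and a measurable set $A$, the measure of the ``two-level'' enlargement of $A$ (combining an $\ell_1$-type control on the coordinates where the displacement is large and an $\ell_2$-type control on the coordinates where it is small, with an appropriate truncation threshold) is at least $1-\pe(A)^{-1}\exp(-ct^2)$ for the relevant deviation parameter $t$. The second step is to verify that the one-dimensional marginal $\nu$ satisfies the hypotheses needed for \cite[Theorem 2.4]{TA94} — this is where the restriction $p\in[1,2]$ enters, since one needs the appropriate integrability/tail condition on $\nu$ (essentially that $\int e^{|x|^p/p}\,d\nu<\infty$, or a suitable comparison of $\nu$ with the exponential distribution); the density $f_p$ with $p\le 2$ has tails no lighter than Gaussian and no heavier than exponential, which is exactly the regime Talagrand's theorem is designed for. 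The third step is the translation of the abstract enlargement set appearing in Talagrand's inequality into the explicit sumset $A+r^{p/2}B_\sh+rB_\om$: here one uses that a displacement vector $v$ with $\|v\|_{\ell_2}\le s$ and $\|v\|_{\ell_p}\le t$ can be split, and that Talagrand's two-level set is contained in (a constant multiple of) $\{v:\|v\|_{\ell_2}\le C s\}+\{v:\|v\|_{\ell_p}\le C t\}$ after the change of variables; matching $s\asymp r^{p/2}$ and $t\asymp r$ and absorbing all numerical constants into $K$ gives the deviation bound $\exp(-r^p/K)$. Undoing the rescaling by $\gamma$ then yields \eqref{eq:tal}.

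The main obstacle I expect is the third step: carefully matching the geometry of Talagrand's abstract enlargement sets to the clean sumset $r^{p/2}B_\sh+rB_\om$. Talagrand's inequality is stated in terms of a convolution-type functional over the coordinates rather than directly in terms of $\ell_2$ and $\ell_p$ balls, and one has to check that the set of achievable displacements, for a given value of the control functional, is sandwiched between dilates of the sumset of an $\ell_2$-ball and an $\ell_p$-ball with the correct exponents (the $p/2$ exponent on the $\sh$-radius is the nonobvious bookkeeping: it arises because the ``small-displacement'' coordinates contribute quadratically while the concentration parameter is $r^p$). I would handle this by introducing the truncation level at scale $\asymp r^{?}$ chosen so that coordinates with rescaled displacement above threshold are controlled in $\ell_p$ (contributing the $rB_\om$ term) and those below are controlled in $\ell_2$ (contributing the $r^{p/2}B_\sh$ term), using Hölder/interpolation between $\ell_p$ and $\ell_2$ to pass between the norms, and then optimizing the truncation level. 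A secondary, more routine obstacle is verifying the precise hypotheses of \cite[Theorem 2.4]{TA94} for $\nu$, but since $f_p$ is log-concave with tails between Gaussian and exponential for $p\in[1,2]$, this should follow directly, and the dependence of $K$ on $p$ comes entirely from this verification plus the interpolation constants.
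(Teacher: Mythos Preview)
Your overall strategy is exactly the paper's: reduce to the unscaled product measure $\pe_\infty=\nu^{\otimes\N}$ via the coordinatewise rescaling $\Gamma:(x_\ell)\mapsto(\gamma_\ell x_\ell)$, under which $B_\sh\mapsto B_{\ell_2}$ and $B_\om\mapsto B_{\ell_p}$, and then invoke Talagrand.

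The difference is that you are planning to do more work than necessary. The paper cites \cite[Theorem~2.4]{TA94} (and the presentation in \cite[Theorem~4.19]{ML05}) as already delivering the inequality directly in the sumset form
\[
\pe_\infty\bigl(A+\sqrt{\tilde r}\,B_{\ell_2}+\tilde r^{1/p}B_{\ell_p}\bigr)\ \ge\ 1-\frac{1}{\pe_\infty(A)}\exp(-\tilde r/K),
\]
for the standard $p$-exponential product measure, with $K$ depending only on $p$. After that, the proof is two lines: substitute $r=\tilde r^{1/p}$ to obtain the exponents $r^{p/2}$ and $r$ on the two balls and $r^p$ in the exponential, and then undo the rescaling by $\Gamma$. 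So your anticipated ``main obstacle'' --- translating Talagrand's abstract control functional into the clean sumset $r^{p/2}B_{\ell_2}+rB_{\ell_p}$ via a truncation and H\"older interpolation argument --- does not arise: that translation is already carried out in the cited references, not in this paper. Likewise the verification that $f_p$ satisfies the hypotheses is absorbed into the citation. Your bookkeeping about the $p/2$ exponent is correct, but it comes out of the trivial reparametrization $\tilde r\mapsto r^p$, not from optimizing a truncation level.
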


  Letting $A=\epsilon B_X$ for a fixed small $\epsilon>0$, the last inequality implies that while both $\om, \sh$ are null sets of $\pe$, the bulk of the mass of $\pe$ is contained in a small $\epsilon$-cushion in $X$ around the sum of a ball of radius $r$ in $\om$ and a ball of radius $r^\frac{p}2$ in $\sh$, for $r$ large. 
 This interpretation is similar to the one for Borell's inequality presented in the discussion after \cite[Proposition 11.17]{GV17}, which is simpler since in the Gaussian case $\om=\sh$. 
     
\begin{remark}
Borell's inequality \cite[Theorem 3.1]{CB75} for Gaussian measures has the form of a stronger isoperimetric inequality, which in turn implies the concentration inequality \eqref{eq:tal} in the case $p=2$ and $\sh=\om$. Using results in isoperimetry for finite independent products of standard  univariate $p$-exponential distributions \cite{CR10}, together with the techniques in \cite{CB75} to pass from finite to infinite dimensions, one can show that there exists $K=K(p)>0$ such that for any $A\in\mathcal{B}(X)$ it holds \[\pe(A+rB_\sh)\ge F_p(F_p^{-1}(\pe(A))+Kr),\] where $F_p$ is the cumulative distribution function of the univariate standard $p$-exponential distribution. 
The concentration inequality implied by the above inequality has the form \[\pe(A+rB_Q)\geq 1-\frac1{\mu(A)}\exp\left(-\frac{r^p}K\right),\] and for $p\in[1,2)$ is strictly weaker than the one in Proposition \ref{p:tal}, since it involves balls of radius $r$ in the space $\sh$ which strictly contains $\om$.

\end{remark}

\section{General contraction theorem for $p$-exponential priors}\label{sec:main}

We next state our general contraction result for $p$-exponential priors in a separable Banach space $X$ possessing a Schauder basis, which generalizes the Gaussian contraction result \cite[Theorem 2.1]{VZ08}. It shows that for a $p$-exponential prior and a $w_0\in X$, if $\epsilon_n$ is such that the blow-up rate of the concentration function satisfies \begin{equation}\label{eq:blow}\varphi_{w_0}(\epsilon_n)\leq n\epsilon_n^2,\end{equation} then there exist sets $X_n\subset X$ of bounded complexity containing the bulk of the prior mass, and the prior puts sufficient mass around $w_0$. These assertions 
are in accordance with the requirements of results giving upper bounds on the contraction rate at $w_0$ for general priors, 
see the discussion in Subsection \ref{ssec:gauss} and the results in Section \ref{sec:mod} below. 

To prove our contraction result, we follow the techniques of the proof of the Gaussian  result \cite[Theorem 2.1]{VZ08}, which is based on Borell's inequality \cite[Theorem 3.1]{CB75} together with the concentration function and its relation to lower bounds on the probability of shifted small balls \cite[Lemma 5.3]{VZrkhs}. However, the situation for $p$-exponential priors is more complicated, due to the intricate form of the available concentration inequality in Proposition \ref{p:tal}. In particular, due to the fact that for $p\in[1,2)$,  the concentration inequality \eqref{eq:tal} involves both balls in $\sh$ and balls in $\om$, while the decentering result in Proposition \ref{prop:lbd} refers to elements in $\om$, in order to prove the complexity bound we need to approximate elements in $\sh$ by elements in $\om$. To this end we let $f,g: \R_{>0}\to\R_{>0}$ be two respectively non-decreasing and non-increasing functions, such that for $\epsilon,a>0$ and for any $h\in aB_\sh$ it holds
\begin{equation}\label{eq:inf}\inf_{x\in\om:\norm{x-h}_X\leq\epsilon}\norm{x}_\om^p\leq f(a) g(\epsilon)^{1-\frac{p}2}\end{equation}and  as $a\to\infty$, $f(a)$ grows at most polynomially to infinity.
For $p=2$, since $\sh=\om$, we can choose $f(a)=a^2$ while $g$ is redundant. For $p\in[1,2)$, since $\om\subsetneq \sh$, $g$ needs to satisfy $g(\epsilon)\to\infty$ as $\epsilon\to0$. For optimal results we need to choose $f$ and $g$ so that the bound \eqref{eq:inf} is as tight as possible. As a result of this extra approximation step, we get a more complicated form on the right hand side of the complexity bound, see \eqref{eq:adh22} below, compared to the Gaussian case \cite[Theorem 2.1]{VZ08}. \mods{Note that the factorization of the right hand side of \eqref{eq:inf} into the two functions $f$ and $g$, is not important for the theory, but arises naturally in practice, see Lemmas \ref{lem:l2dom} and \ref{lem:l8dom} below.}

\begin{theorem}\label{t:main}
Let $\pe$ be a $p$-exponential measure with scaling sequence $\gamma$ in a separable Banach space $X$ with Schauder basis, where $p\in[1,2]$.
Let $W\sim \pe$.  Fix $f,g : \R_{>0}\to \R_{>0}$, as in \eqref{eq:inf} above and let $w_0\in X$. 

 Assume $\epsilon_n>0$ such that $\varphi_{w_0}(\epsilon_n)\leq n\epsilon_n^2$, where $n\epsilon_n^2\gtrsim1$. Then for any $C>1$, there exists a measurable set $X_n\subset X$ and a constant $R>0$ depending on $C, p$ and $f$, such that 
\begin{align}\label{eq:adh22}
\log N(4{\epsilon}_n, X_n,\norm{\cdot}_X)&\leq R\big(n\epsilon_n^2\vee f(n^\frac12\epsilon_n)g(\epsilon_n)^{1-\frac{p}2}\big),\\[10pt]
\label{eq:adh23}
\rp(W\notin X_n)&\leq \exp(-Cn\epsilon_n^2),\\[10pt]
\label{eq:adh24}
\rp(\norm{W-w_0}_X<2\epsilon_n)&\geq \exp(-n\epsilon_n^2).
\end{align} 
\end{theorem}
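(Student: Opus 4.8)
The plan is to follow the structure of the proof of \cite[Theorem 2.1]{VZ08}, replacing Borell's inequality by the Talagrand-type concentration inequality of Proposition \ref{p:tal} and the Gaussian decentering lemma by Proposition \ref{prop:lbd}, and to handle the extra complication that the concentration inequality involves balls in both $\sh$ and $\om$. First I would establish \eqref{eq:adh24}: by the hypothesis $\varphi_{w_0}(\epsilon_n)\le n\epsilon_n^2$ and Theorem \ref{thm:cf} applied at scale $2\epsilon_n$, we get $-\log\pe(w_0+2\epsilon_n B_X)\le\varphi_{w_0}(\epsilon_n)\le n\epsilon_n^2$, which is exactly \eqref{eq:adh24}. (One should check the monotonicity bookkeeping, i.e.\ that $\varphi_{w_0}(\epsilon_n)$ controls $\varphi_{w_0}(\cdot)$ at the scale needed by Theorem \ref{thm:cf}; since $\varphi_{w_0}$ is decreasing this is immediate.)

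Next I would construct $X_n$. Following \cite{VZ08}, set $X_n = \epsilon_n B_X + M_n^{p/2} B_\sh + M_n B_\om$ for a radius $M_n>0$ to be chosen as a multiple of $n^{1/2}\epsilon_n$. Then Proposition \ref{p:tal} with $A=\epsilon_n B_X$ and $r=M_n$ gives
\[
\pe(W\notin X_n)\le \frac{1}{\pe(\epsilon_n B_X)}\exp\!\left(-\frac{M_n^p}{K}\right)\le \exp\!\left(\varphi_{w_0}(\epsilon_n)\right)\exp\!\left(-\frac{M_n^p}{K}\right)\le \exp\!\left(n\epsilon_n^2 - \frac{M_n^p}{K}\right),
\]
using $-\log\pe(\epsilon_n B_X)\le\varphi_{w_0}(\epsilon_n)\le n\epsilon_n^2$ (the centered small ball term is dominated by the full concentration function). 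Choosing $M_n = (K(C+1))^{1/p}(n\epsilon_n^2)^{1/p}$, or more conveniently $M_n\asymp n^{1/2}\epsilon_n$ with a large enough constant when $p=2$ and the analogous scaling for general $p$, secures \eqref{eq:adh23}. Here one must be slightly careful: to land on the clean form $f(n^{1/2}\epsilon_n)$ in \eqref{eq:adh22} I would take $M_n = L\, n^{1/2}\epsilon_n$ for a constant $L=L(C,p)$ large enough that $M_n^p/K \ge (C+1)n\epsilon_n^2$; this is possible exactly because $p\le 2$ forces $n^{p/2}\epsilon_n^p \ge n\epsilon_n^2$ when $n\epsilon_n^2\gtrsim 1$ — so the constraint $n\epsilon_n^2\gtrsim1$ is used precisely here.

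Then comes the entropy bound \eqref{eq:adh22}, which I expect to be the main obstacle. The set $X_n$ is a sum of three balls, and covering it amounts to covering $M_n^{p/2}B_\sh + M_n B_\om$ up to precision $\epsilon_n$ in $\norm{\cdot}_X$ (the $\epsilon_n B_X$ summand only inflates radii by a constant factor). In the Gaussian case $\sh=\om$ and one just invokes the small-ball estimate to bound $N(\epsilon_n, M_n B_\rkhs, \norm{\cdot}_X)$ in terms of $\pe(\epsilon_n B_X)$. Here the $\sh$-ball is the nuisance: I would cover $M_n^{p/2}B_\sh$ by first using \eqref{eq:inf} to approximate each $h\in M_n^{p/2}B_\sh$ within $\epsilon_n$ (in $\norm{\cdot}_X$) by some $x\in\om$ with $\norm{x}_\om^p\le f(M_n^{p/2\cdot(2/p)})\,g(\epsilon_n)^{1-p/2}$ — wait, more precisely with $a = M_n^{p/2}$ so the bound reads $\inf \norm{x}_\om^p\le f(M_n^{p/2})g(\epsilon_n)^{1-p/2}$; combined with the $M_n B_\om$ summand this shows $X_n$ is within $2\epsilon_n$ of $3\epsilon_n B_X + \rho_n B_\om$ where $\rho_n^p \asymp M_n^p + f(M_n^{p/2})g(\epsilon_n)^{1-p/2}$. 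Hmm — to get the advertised right-hand side I actually want $f$ evaluated at $n^{1/2}\epsilon_n$, so I would instead keep track that $M_n^{p/2}\asymp (n^{1/2}\epsilon_n)^{p/2}\lesssim$ something controlled, or more cleanly choose the $B_\sh$-radius in the definition of $X_n$ to be exactly $(n^{1/2}\epsilon_n)^{p/2}$ (the concentration inequality with $r=n^{1/2}\epsilon_n$ needs $r^{p/2}=(n^{1/2}\epsilon_n)^{p/2}$ for the $\sh$-ball and $r=n^{1/2}\epsilon_n$ for the $\om$-ball, which matches the structure of Proposition \ref{p:tal} with $r^{p/2}B_\sh + rB_\om$ — so $X_n = \epsilon_n B_X + (n^{1/2}\epsilon_n)^{p/2}B_\sh + n^{1/2}\epsilon_n B_\om$ after rescaling $r$ by a constant). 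Then \eqref{eq:inf} with $a=(n^{1/2}\epsilon_n)^{p/2}$... no — rereading \eqref{eq:inf}, $a$ is the $\sh$-radius itself, so $a=(n^{1/2}\epsilon_n)^{p/2}$ and the bound is $f\big((n^{1/2}\epsilon_n)^{p/2}\big)g(\epsilon_n)^{1-p/2}$, not $f(n^{1/2}\epsilon_n)$; since $f$ grows at most polynomially, $f\big((n^{1/2}\epsilon_n)^{p/2}\big)\lesssim f(n^{1/2}\epsilon_n)^{C'}$... Actually I suspect the intended reading is that one rescales so the $\sh$-radius is $n^{1/2}\epsilon_n$ (absorbing the power into the constant $K$), giving exactly $f(n^{1/2}\epsilon_n)g(\epsilon_n)^{1-p/2}$. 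I would pin this down by writing the concentration inequality in the form $\pe(A + sB_\sh + s^{2/p}B_\om)\ge 1 - \pe(A)^{-1}\exp(-s^{2}/K')$ (substitute $r = s^{2/p}$ in Proposition \ref{p:tal}), take $s = L\,n^{1/2}\epsilon_n$, so the $\sh$-radius is $\asymp n^{1/2}\epsilon_n$ and apply \eqref{eq:inf} with $a=L n^{1/2}\epsilon_n$. Having reduced to covering $c\epsilon_n B_X + \rho_n B_\om$ with $\rho_n^p\asymp n^{p/2}\epsilon_n^p + f(Ln^{1/2}\epsilon_n)g(\epsilon_n)^{1-p/2}$, I would finish exactly as in the Gaussian argument: the number of $\epsilon_n$-translates of $\epsilon_n B_X$ needed to cover $\rho_n B_\om$ is bounded, via Proposition \ref{prop:lbd} and a volumetric/packing argument, by $\exp(\text{const}\cdot\rho_n^p)\cdot\pe(\epsilon_n B_X)^{-1}$ — i.e.\ one packs disjoint $\epsilon_n/2$-balls centered at points of $\om$ inside $\rho_n B_\om + \epsilon_n B_X$, lower-bounds each by $\exp(-\rho_n^p/p)\pe(\tfrac{\epsilon_n}{2}B_X)$ using Proposition \ref{prop:lbd}, and uses that the total mass is $\le 1$. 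This yields $\log N(4\epsilon_n, X_n, \norm{\cdot}_X)\lesssim \rho_n^p + \varphi_{w_0}(\epsilon_n)\lesssim n\epsilon_n^2 \vee f(n^{1/2}\epsilon_n)g(\epsilon_n)^{1-p/2}$, which is \eqref{eq:adh22}. The delicate points — which constant $L$, which radius convention in Proposition \ref{p:tal}, and verifying the packing step with Proposition \ref{prop:lbd} rather than Gaussian translation — are exactly where the $p\ne 2$ case departs from \cite{VZ08}, and constitute the bulk of the work.
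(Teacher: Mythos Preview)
Your proposal is essentially correct and follows the same route as the paper: Theorem~\ref{thm:cf} for \eqref{eq:adh24}, the set $X_n=\epsilon_nB_X+M_n^{p/2}B_\sh+M_nB_\om$ together with Proposition~\ref{p:tal} for \eqref{eq:adh23}, and then the inclusion $X_n\subset 2\epsilon_nB_X+\overline{M}_nB_\om$ via \eqref{eq:inf} followed by a packing argument using Proposition~\ref{prop:lbd} for \eqref{eq:adh22}.

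Your meandering about the parametrisation is unnecessary, and one of your detours is actually wrong. Your first instinct, $M_n=(K(C+1)n\epsilon_n^2)^{1/p}$, is exactly the paper's choice and is the right one: with it the $\sh$-radius is $M_n^{p/2}=\sqrt{K(C+1)}\,n^{1/2}\epsilon_n$, so applying \eqref{eq:inf} with $a=M_n^{p/2}$ gives $f(\sqrt{K(C+1)}\,n^{1/2}\epsilon_n)g(\epsilon_n)^{1-p/2}$; the polynomial growth of $f$ together with $n^{1/2}\epsilon_n\gtrsim1$ then absorbs the constant into $R$ (this is where $n\epsilon_n^2\gtrsim1$ is actually used). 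By contrast, your alternative $M_n=L\,n^{1/2}\epsilon_n$ does \emph{not} in general secure \eqref{eq:adh23}: you would need $L^p(n\epsilon_n^2)^{p/2}\ge K(C+1)n\epsilon_n^2$, i.e.\ $(n\epsilon_n^2)^{p/2-1}\ge K(C+1)/L^p$, which fails for $p<2$ whenever $n\epsilon_n^2\to\infty$. Your final reparametrisation $r=s^{2/p}$ repairs this and is equivalent to the paper's choice, but you could have saved yourself the trip by trusting your first answer and noticing that $M_n^{p/2}$, not $M_n$, is what lands inside $f$.
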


The difference between the assertions of the above theorem compared to the Gaussian result \cite[Theorem 2.1]{VZ08}, is  the right hand side in the complexity bound \eqref{eq:adh22}, which is potentially larger than $n\epsilon_n^2$, depending on which of the two terms dominates in the maximum asymptotically as $\epsilon_n\to0$. 
In the Gaussian case $p=2$, the right hand side in  \eqref{eq:adh22} becomes $n\epsilon_n^2$ and we recover \cite[Theorem 2.1]{VZ08}. For $p\in[1,2)$, depending on the norm in the parameter space $X$, we have a different form of the tightest functions $f$ and $g$ that we can verify to satisfy \eqref{eq:inf}. If the quality of approximation in $\norm{\cdot}_X$ of elements in $\sh$ by elements in $\om$ is not sufficiently good, the right hand side in \eqref{eq:adh22} can be dominated by the second term and in this case the complexity bound is not in accordance with the corresponding complexity bound in general prior contraction results like \cite[Theorem 8.9 and 8.19]{GV17}. The next corollary handles such situations.

\begin{corollary}\label{c:main}
Under the assumptions of Theorem \ref{t:main}, let $\tilde\epsilon_n>0$ be such that \begin{equation}\label{eq:sub}f(n^\frac12{\epsilon}_n)g({\epsilon}_n)^{1-\frac{p}2}\lesssim n\tilde{\epsilon}_n^2.\end{equation}
Then for any $C>1$, there exists a measurable set $X_n\subset X$ and a constant $R>0$, such that 
\begin{align}\label{eq:adh22'}
\log N(4({\epsilon}_n\vee\tilde\epsilon_n), X_n,\norm{\cdot}_X)&\leq Rn(\epsilon_n\vee\tilde\epsilon_n)^2,\\[10pt]
\label{eq:adh23'}
\rp(W\notin X_n)&\leq \exp(-Cn\epsilon_n^2),\\[10pt]
\label{eq:adh24'}
\rp(\norm{W-w_0}_X<2\epsilon_n)&\geq \exp(-n\epsilon_n^2).
\end{align}

\end{corollary}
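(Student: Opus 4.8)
The plan is to deduce Corollary \ref{c:main} directly from Theorem \ref{t:main} by a simple case analysis on which of the two radii $\epsilon_n$ or $\tilde\epsilon_n$ is larger, combined with monotonicity of covering numbers in the radius and of $f,g$ in their arguments. First I would apply Theorem \ref{t:main} with a constant $C'>C$ (to be fixed below) to obtain a set $X_n\subset X$ and a constant $R'>0$ satisfying \eqref{eq:adh22}--\eqref{eq:adh24}. The conclusions \eqref{eq:adh23'} and \eqref{eq:adh24'} are then immediate: \eqref{eq:adh24'} is literally \eqref{eq:adh24}, and \eqref{eq:adh23'} follows from \eqref{eq:adh23} since $C'n\epsilon_n^2\geq Cn\epsilon_n^2$. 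So the only real content is converting the complexity bound \eqref{eq:adh22} into \eqref{eq:adh22'}.

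For the entropy bound, I would start from \eqref{eq:adh22}, which reads $\log N(4\epsilon_n, X_n,\norm{\cdot}_X)\leq R'\big(n\epsilon_n^2\vee f(n^{1/2}\epsilon_n)g(\epsilon_n)^{1-p/2}\big)$. Since covering numbers are non-increasing in the radius and $4(\epsilon_n\vee\tilde\epsilon_n)\geq 4\epsilon_n$, we have $\log N(4(\epsilon_n\vee\tilde\epsilon_n),X_n,\norm{\cdot}_X)\leq \log N(4\epsilon_n,X_n,\norm{\cdot}_X)$. Now I bound each term of the maximum on the right of \eqref{eq:adh22}: the first term satisfies $n\epsilon_n^2\leq n(\epsilon_n\vee\tilde\epsilon_n)^2$ trivially, and the second term satisfies $f(n^{1/2}\epsilon_n)g(\epsilon_n)^{1-p/2}\lesssim n\tilde\epsilon_n^2\leq n(\epsilon_n\vee\tilde\epsilon_n)^2$ by hypothesis \eqref{eq:sub}. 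Absorbing the implicit constant from \eqref{eq:sub} into a new constant $R$, one gets $\log N(4(\epsilon_n\vee\tilde\epsilon_n),X_n,\norm{\cdot}_X)\leq Rn(\epsilon_n\vee\tilde\epsilon_n)^2$, which is \eqref{eq:adh22'}.

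A minor technical point I would address is the role of the condition $n\epsilon_n^2\gtrsim 1$ from Theorem \ref{t:main}, which carries over unchanged since $\epsilon_n$ is the same sequence; also I would note that $\tilde\epsilon_n$ need not satisfy a concentration-function bound of its own — it enters only through the purely deterministic inequality \eqref{eq:sub}, which is why the prior mass statements \eqref{eq:adh23'}, \eqref{eq:adh24'} retain $\epsilon_n$ rather than $\epsilon_n\vee\tilde\epsilon_n$ in the exponents. One must keep $\epsilon_n$ (not the maximum) in \eqref{eq:adh23'} and \eqref{eq:adh24'} precisely because Theorem \ref{t:main} only guarantees the excess-mass and small-ball bounds at scale $\epsilon_n$; there is no mechanism to upgrade them to scale $\epsilon_n\vee\tilde\epsilon_n$, nor is it needed for the downstream general-prior contraction theorems.

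There is essentially no obstacle here: the corollary is a bookkeeping consequence of Theorem \ref{t:main}, and the only thing to be careful about is the direction of each monotonicity (covering numbers decrease in radius, $f$ is non-decreasing, $g$ is non-increasing — though $g$'s monotonicity is not even used since $\epsilon_n$ is unchanged) and the tracking of constants. The substantive work has already been done in proving Theorem \ref{t:main}; Corollary \ref{c:main} merely repackages its conclusion into a form directly comparable with the complexity requirements of \cite[Theorems 8.9 and 8.19]{GV17} in the regime where the second term in \eqref{eq:adh22} would otherwise dominate.
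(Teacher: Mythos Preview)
Your proposal is correct and follows essentially the same approach as the paper, which simply notes that the corollary is immediate from Theorem \ref{t:main} since increasing the radius from $4\epsilon_n$ to $4(\epsilon_n\vee\tilde\epsilon_n)$ decreases the left-hand side of \eqref{eq:adh22} while the hypothesis \eqref{eq:sub} controls the right-hand side. The only superfluous step is invoking Theorem \ref{t:main} with $C'>C$; you can simply apply it with $C$ itself, since \eqref{eq:adh23} already gives \eqref{eq:adh23'} verbatim.
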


The corollary follows immediately from Theorem \ref{t:main}, since taking a larger $\epsilon_n$ makes the left hand side of the complexity bound \eqref{eq:adh22} smaller and the right hand side larger. 

In settings for which \eqref{eq:sub} is satisfied with $\tilde{\epsilon}_n=\epsilon_n$, for $\epsilon_n$ the fastest rate solving \eqref{eq:blow}, we can apply the corollary and the resulting three assertions are in accordance with the general contraction results which show that this $\epsilon_n$ is an upper bound on the contraction rate. We will see in Section \ref{sec:l2}, that this is the case in separable Hilbert space settings for $\alpha$-regular $p$-exponential priors and under Besov-type regularity of $w_0$. In this situation the intuition about the contraction rate is similar to the Gaussian case, the only difference being that the RKHS is replaced by the Banach space $\om$. We refer to the discussion in  \cite[Section 11.3]{GV17} which we adapt here to $p$-exponential priors:  the rate of contraction is up to constants the maximum of the minimal solution to the small ball inequality 
\[-\log\pe(\epsilon_nB_X)\leq n \epsilon_n^2\] and the minimal solution to the approximation inequality \[\inf_{h\in \om:\norm{h-w}_X\leq\epsilon_n} \norm{h}_{\om}^p \leq n\epsilon_n^2.\] The first inequality depends only on the prior, showing that priors that put little mass around the origin give slow rates independently of $w_0$. The second inequality depends on both the prior and the true $w_0$  and relates to the loss of probability mass in small balls around $w_0$ compared to centered small balls. It shows that even if the prior puts a lot of mass around the origin, it is still possible to give a slow rate at a $w_0$, depending on the positioning of $w_0$ relative to the Banach space $\om$. 

On the other hand in settings for which \eqref{eq:sub} is only satisfied for $\tilde{\epsilon}_n$ a sequence decaying more slowly than the fastest rate $\epsilon_n$ solving \eqref{eq:blow}, the resulting three assertions of the corollary are only in accordance with the general contraction result in the independent and identically distributed data case \cite[Theorem 8.9]{GV17}, which shows that the slower rate  $\tilde{\epsilon}_n$ is an upper bound on the contraction rate. We will see in Section \ref{sec:l8} that such issues arise for $\alpha$-regular $p$-exponential priors in $C[0,1]$, defined via wavelet bases. {In this case the intuition regarding the rates of contraction is obfuscated.}

\begin{remark} For Gaussian priors, the availability of an upper bound on the probability of small balls around an element $w\in X$ in terms of the concentration function, enabled the study of lower bounds on posterior contraction rates in \cite{IC08}. Such an upper bound remains open for $p$-exponential priors with $p\neq2$, see remark \ref{rem:conc}, hence the use of the techniques of \cite{IC08} to similarly obtain lower bounds on posterior contraction rates in this case is precluded.
\end{remark}


\section{Posterior contraction for specific models}\label{sec:mod}
We next use the results of the preceding section to study posterior contraction for general $p$-exponential priors in specific nonparametric statistical settings. 
Indeed, 
 the assertions of Theorem \ref{t:main} and Corollary \ref{c:main}, point to the assumptions of the well known general model and general prior posterior contraction rate results \cite[Theorem 8.9 and 8.19]{GV17}. However, the former results are expressed purely in terms of the Banach space norm of the parameter space, while the latter have conditions relating to statistically relevant norms and discrepancies. As discussed in Subsection \ref{ssec:gauss}, the necessary reconciliatory work has already been carried out in various standard statistical settings and can be readily used for $p$-exponential priors in the same way that it was used for Gaussian priors in \cite[Section 3]{VZ08}.

Note, that compared to the Gaussian contraction results found in \cite[Section 3]{VZ08} or \cite[Section 11.3]{GV17}, in the formulation of our results we need to take into account the more complicated complexity bound in \eqref{eq:adh22}. For reasons of brevity, we only present here contraction  results for density estimation and for the white noise model. Results in other models such as binary classification and nonparametric regression follow similarly.

\subsection{Density estimation}\label{ssec:de}
We consider the estimation of a probability density $\dens$ relative to a finite measure $\nu$ on a measurable space $(\ms,\mss)$, based on a sample of observations $X_1,\dots,X_n|\dens\stackrel{iid}{\sim}\dens$. Following  \cite[Section 11.3.1]{GV17}, we construct a prior $\Pi$ on $\dens$ by  letting \[\dens(x)=\frac{e^{W(x)}}{\int_{\ms}e^{W(y)}d\nu(y)}, \;x\in\ms,\] where $W$ is a draw from a $p$-exponential measure $\mu$ on $L_\infty(\ms)\cap C(\ms)$.  We require that $W$ is almost surely continuous so that it can be evaluated at $x\in\ms$ and $\dens(x)$ is well defined. We can define $p$-exponential priors with continuous and bounded paths, see Section \ref{sec:l8} below.

Let $\Pi_n(\cdot|X_1,\dots,X_n)$ be the posterior distributions after observing $X_1,\dots, X_n$. The following contraction result is a generalization of the Gaussian result \cite[Theorem 3.1]{VZ08}. It gives contraction rates in the Hellinger distance $d_H(\cdot,\cdot)$ between two probability densities. The proof is identical to the Gaussian case, once we take into account Corollary \ref{c:main} (see also \cite[Theorem 11.21]{GV17}). 

\begin{theorem}\label{thm:de}
Let $W$ be a $p$-exponential random element in a separable Banach subspace of $L_\infty(\ms)$ possessing a Schauder basis,
which is almost surely continuous. Assume $w_0=\log \dens_0$ belongs to the support of $W$ and denote by $P^n_0$ the corresponding distribution of the vector $(X_1,\dots,X_n)$. Let $\epsilon_n$ satisfying \eqref{eq:blow} with respect to $\norm{\cdot}_{L_\infty}$ and $\tilde\epsilon_n$ satisfying \eqref{eq:sub} where the functions $f, g$  are defined in \eqref{eq:inf}. Then $\Pi_n(\dens:d_H(\dens,\dens_0)>M(\epsilon_n\vee\tilde\epsilon_n)|X_1,\dots,X_n)\to0,$ in $P^n_{0}$-probability, for some sufficiently large constant $M$. 
\end{theorem}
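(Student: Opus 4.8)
\textbf{Proof proposal for Theorem \ref{thm:de}.}

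The plan is to reduce the statement to the general posterior contraction theorem for independent and identically distributed observations, namely \cite[Theorem 8.9]{GV17} (equivalently \cite[Theorem 11.21]{GV17}), exactly as in the Gaussian case \cite[Theorem 3.1]{VZ08}, with the only modification being the use of Corollary \ref{c:main} in place of \cite[Theorem 2.1]{VZ08}. First I would recall the two reconciliatory facts that hold for the map $W\mapsto \dens_W := e^W/\int_{\ms}e^W\,d\nu$ and are not specific to Gaussian priors: (i) if $\norm{w-w_0}_{L_\infty}\le\epsilon$ then the Kullback--Leibler divergence and second moment between $P_{\dens_{w_0}}$ and $P_{\dens_w}$ are bounded by a multiple of $\epsilon^2 e^{\epsilon}$, hence $\lesssim \epsilon^2$ for small $\epsilon$; and (ii) the Hellinger distance satisfies $d_H(\dens_w,\dens_v)\lesssim \norm{w-v}_{L_\infty}e^{\norm{w-v}_{L_\infty}/2}$, so small $L_\infty$-balls around $w_0$ map into small Hellinger balls, and $L_\infty$-entropy of a set $X_n$ controls the Hellinger entropy of its image. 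These are precisely \cite[Lemma 3.1]{VZ08} and the estimates in \cite[Section 11.3.1]{GV17}; I would simply cite them.

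With these in hand, set $\eta_n := \epsilon_n\vee\tilde\epsilon_n$ and apply Corollary \ref{c:main}, which produces measurable sets $X_n\subset X$ and a constant $R$ with $\log N(4\eta_n, X_n,\norm{\cdot}_X)\le Rn\eta_n^2$, with $\rp(W\notin X_n)\le e^{-Cn\epsilon_n^2}$ for any prescribed $C>1$, and with $\rp(\norm{W-w_0}_X<2\epsilon_n)\ge e^{-n\epsilon_n^2}$. Translating through (i) and (ii): the prior-mass condition of \cite[Theorem 8.9]{GV17} (a prior probability at least $e^{-c n\eta_n^2}$ on a KL-type neighbourhood of $P_{\dens_0}$) follows from the third assertion of Corollary \ref{c:main} together with (i); the sieve sets $\ms_n := \{\dens_w : w\in X_n\}$ have remaining mass $\Pi(\dens\notin\ms_n)\le e^{-Cn\epsilon_n^2}$, which dominates $e^{-(c+4)n\eta_n^2}$ once $C$ is taken large enough; and the entropy condition $\log N(\eta_n,\ms_n,d_H)\lesssim n\eta_n^2$ follows from the $L_\infty$-entropy bound and (ii). Since in density estimation exponentially powerful tests for separating $\dens_0$ from Hellinger balls always exist \cite[Lemma 8.26 / Section 11.3]{GV17}, the testing hypothesis of \cite[Theorem 8.9]{GV17} is automatic. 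Invoking that theorem yields $\Pi_n(\dens: d_H(\dens,\dens_0)>M\eta_n\mid X_1,\dots,X_n)\to0$ in $P^n_0$-probability for $M$ large, which is the claim.

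The only genuinely delicate point is bookkeeping of constants: one must choose the constant $C$ in Corollary \ref{c:main} large relative to the constant appearing in the prior-mass lower bound (which is governed by $\epsilon_n$, not $\eta_n$), so that the sieve complement is negligible compared to the KL-ball mass; since $\tilde\epsilon_n\ge\epsilon_n$ the inequality $e^{-Cn\epsilon_n^2}\le e^{-(c+4)n\eta_n^2}$ still requires $C$ to absorb the ratio $\eta_n^2/\epsilon_n^2$, so strictly speaking this works cleanly when $\eta_n\asymp\epsilon_n$ (which is the case in all our applications) and otherwise one should phrase the prior mass and remaining mass conditions directly at scale $\eta_n$, using that $\eta_n\ge\epsilon_n$ makes the mass lower bound $e^{-n\epsilon_n^2}\ge e^{-n\eta_n^2}$ still hold in the form required. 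Beyond this, everything is a verbatim transcription of the Gaussian argument in \cite[Section 3]{VZ08}, with $\om$ playing the role of the RKHS inside the concentration function, and I would state the proof as such rather than reproducing the details.
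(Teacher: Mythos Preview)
Your proposal is correct and follows exactly the approach indicated in the paper: invoke Corollary \ref{c:main} in place of the Gaussian \cite[Theorem 2.1]{VZ08}, feed the resulting three assertions through the reconciliatory estimates of \cite[Lemma 3.1]{VZ08} / \cite[Section 11.3.1]{GV17}, and conclude via \cite[Theorem 8.9]{GV17}. Your worry about the constant bookkeeping is overcautious: both the prior-mass lower bound \eqref{eq:adh24'} and the sieve-complement upper bound \eqref{eq:adh23'} in Corollary \ref{c:main} are already expressed at the same scale $\epsilon_n$, so the two-rate version of \cite[Theorem 8.9]{GV17} (entropy at $\eta_n$, mass and sieve at $\epsilon_n\le\eta_n$) applies directly without needing $C$ to absorb any ratio.
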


\subsection{White noise model}\label{ssec:wn}
We study the estimation of a signal $w\in\Theta\subset L_2[0,1]$, from the observation of a sample path of the stochastic process \[X_t^{(n)}=\int_0^t w(s)ds+\frac1{\sqrt{n}}B_t, \;t\in[0,1]\] where $B$ is standard Brownian motion. Let $P_w^{(n)}$ be the distribution of the sample path $X^{(n)}$ in $C[0,1]$. As a prior on $w$ we take a  $p$-exponential random element $W$ in $L_2[0,1]$. We can define such priors using random series expansions for example in an orthonormal basis of $L_2[0,1]$, see Section \ref{sec:l2} below. Observe that by Proposition \ref{prop:supp2}, the topological support of such a prior is $L_2[0,1]$.

We denote by $\Pi_n(\cdot|X^{(n)})$ the posterior on $w$ after observing the sample path $X^{(n)}$.
The following posterior contraction result is a generalization of \cite[Theorem 3.4]{VZ08}. The proof is identical to the Gaussian case, and follows immediately by combining \cite[Theorem 8.31]{GV17} and Theorem \ref{t:main}. 

\begin{theorem}\label{thm:wn}
Let $W$ be a $p$-exponential random element in $L_2[0,1]$. Assume that the true value of $w$ is contained in the support of $W$, $w_0\in L_2[0,1]$. Furthermore, assume that $\epsilon_n$ satisfies the rate equation $\varphi_{w_0}(\epsilon_n)\leq n\epsilon_n^2$ with respect to the $L_2[0,1]$-norm  and is such that  \eqref{eq:adh22} holds with $n\epsilon_n^2$ on the right hand side. Then $\Pi_n(w:\norm{w-w_0}_{L_2}>M\epsilon_n|X^{(n)})\to0$ in $P^{(n)}_{w_0}$-probability, for some $M>0$.
\end{theorem}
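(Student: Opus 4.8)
The plan is to derive this result as a direct consequence of Theorem \ref{t:main} combined with the general-prior contraction theorem in the white noise model, namely \cite[Theorem 8.31]{GV17}. First I would recall the structure of \cite[Theorem 8.31]{GV17}: in the white noise model with $L_2$ loss, exponentially powerful tests separating $w_0$ from $L_2$-balls around it are automatically available (the total variation/Hellinger affinity between the two Gaussian shift experiments is controlled by the $L_2$-distance of the shifts), so the only hypotheses that need verification are the three standard prior conditions --- a prior-mass (small-ball) lower bound around $w_0$, existence of sieve sets $\Theta_n$ carrying all but exponentially little prior mass, and an entropy bound $\log N(\epsilon_n,\Theta_n,\norm{\cdot}_{L_2})\lesssim n\epsilon_n^2$ --- all expressed purely in the $\norm{\cdot}_{L_2}$ norm. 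This is exactly the point stressed in Subsection \ref{ssec:gauss}: the reconciliatory work translating statistically relevant discrepancies into the Banach-space norm is model-specific but prior-agnostic, so the same reduction that \cite{VZ08} used for Gaussian priors applies verbatim here.

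The key steps are then: (i) take $X=L_2[0,1]$, a separable Hilbert space with an orthonormal Schauder basis, and $\pe=\mathcal{L}(W)$ the $p$-exponential prior; by Proposition \ref{prop:supp2} its support is all of $L_2[0,1]$, so the assumption $w_0\in L_2[0,1]$ is precisely $w_0\in\mathrm{supp}(\pe)$; (ii) invoke Theorem \ref{t:main} with the given $\epsilon_n$ solving $\varphi_{w_0}(\epsilon_n)\leq n\epsilon_n^2$ and with $f,g$ chosen as in \eqref{eq:inf} --- the hypothesis of the present theorem is precisely that these $f,g$ can be taken so that the entropy bound \eqref{eq:adh22} reads $\log N(4\epsilon_n,X_n,\norm{\cdot}_{L_2})\leq R n\epsilon_n^2$, i.e. the second term $f(n^{1/2}\epsilon_n)g(\epsilon_n)^{1-p/2}$ does not dominate; (iii) feed the three conclusions \eqref{eq:adh22}--\eqref{eq:adh24} of Theorem \ref{t:main} into \cite[Theorem 8.31]{GV17} as the sieve $\Theta_n:=X_n$, noting that $4\epsilon_n$ versus $\epsilon_n$ in the covering radius only changes constants and is absorbed into $M$; (iv) conclude $\Pi_n(w:\norm{w-w_0}_{L_2}>M\epsilon_n\mid X^{(n)})\to0$ in $P^{(n)}_{w_0}$-probability for a suitably large $M$.

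There is essentially no genuine obstacle here --- the theorem is deliberately stated so that all the substance sits in Theorem \ref{t:main} and in the pre-existing white-noise reconciliation of \cite{GV17}. The only point requiring a word of care is bookkeeping of the numerical constants: \cite[Theorem 8.31]{GV17} is typically phrased with prior-mass lower bound $e^{-n\epsilon_n^2}$ and entropy/sieve-complement controlled by constants that must satisfy the inequalities linking them (the complement mass must decay faster than the prior mass, which is why Theorem \ref{t:main} allows any $C>1$), so one checks that taking $C$ large enough in Theorem \ref{t:main} makes \eqref{eq:adh23} compatible with \eqref{eq:adh24}. Everything else --- the $4\epsilon_n$ covering radius, the factor $2$ in \eqref{eq:adh24}, the arbitrary $M$ --- is harmless rescaling, so the proof is a two-line citation once Theorem \ref{t:main} is in hand.
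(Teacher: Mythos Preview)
Your proposal is correct and matches the paper's own proof exactly: the paper states that the result ``follows immediately by combining \cite[Theorem 8.31]{GV17} and Theorem \ref{t:main},'' which is precisely your argument. Your expanded discussion of constant bookkeeping and the role of Proposition \ref{prop:supp2} is accurate but more detailed than the paper bothers to spell out.
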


\section{The separable Hilbert space setting}\label{sec:l2}
In this section we consider $p$-exponential measures in a separable Hilbert space $X$. Since any separable Hilbert space is isometrically isomorphic to the space of square summable sequences $\ell_2$, we can equivalently, as far as concentration is concerned, work in $\ell_2$. This equivalence holds, provided the $p$-exponential measure in $X$ is defined using expansions in an orthonormal basis, see Subsection \ref{ssec:defn}. In particular, we consider $\alpha$-regular $p$-exponential measures in sequence space and study their concentration at centers of varying Besov-type regularity. Note that these measures are merely a different parametrization of Besov-space priors used in applied inverse problems literature, \cite{LSS09}. We combine our findings with Theorem \ref{thm:wn}, to obtain posterior contraction rates in the white noise model under Besov-type regularity of the truth.

We first define the following Besov-type sequence spaces.

\begin{definition}\label{defn:besov}
For any $s>0$, $q\geq1$ and $d\in \N$, we denote by $B^s_q$ the weighted $\ell_q$ spaces \[B^s_q={B^s_q(d)}=\{u\in\R^\infty: \sum_{\ell=1}^\infty \ell^{q(\frac{s}d+\frac12)-1}|u_\ell|^q<\infty\},
\quad \norm{u}_{B^s_q}=\left( \sum_{\ell=1}^\infty \ell^{q(\frac{s}d+\frac12)-1}|u_\ell|^q\right)^\frac1q.\]
\end{definition}
The case $q=2$ corresponds to Sobolev-type spaces. These spaces can be identified for example to Sobolev spaces $H^s$ of periodic functions on $\mathbb{T}^d=(0,1]^d$  with $s$ square integrable derivatives, using expansions in the Fourier basis. Similarly, for $q\neq2$ $B^s_q$ can be identified with the Besov space $B^s_{q_1q_2}$ of periodic functions on $\mathbb{T}^d$, with integrability parameters $q_1=q_2=q$ and smoothness parameter $s$, using expansions in certain sufficiently regular orthonormal wavelet bases \cite{schmeisser1987topics}. {Of particular interest is the case $q<2$, which includes classes of non-smooth and spatially inhomogeneous functions, see for example \cite{DJ98}.} {Such functions are useful in many scientific disciplines, for example in geophysics and medical imaging, as they can be smooth in one area and rough in another one. The rates we obtain below suggest that, when interested in reconstructing an unknown function of this type, it is beneficial to use a non-Gaussian $p$-exponential prior.} 

\subsection{$\alpha$-regular $p$-exponential priors in $\ell_2$}
Consider $\pe$ a $p$-exponential measure in sequence space with $\gamma_\ell= \ell^{-\frac12-\frac{\alpha}d}, \alpha>0, p\in[1,2]$, $d\in\N$. {The parameter $d$ expresses the dimension inherent in the Hilbert space $X$ and is fixed for a given model, for example $X=L_2(\mathbb{T}^d)$.} As discussed in Section \ref{ssec:defn}, since $\gamma\in \ell_2$ it holds $\pe(\ell_2)=1$. 
Furthermore, by Proposition \ref{prop:supp2} the support of $\pe$ is $\ell_2$. We call such a measure a \emph{$d$-dimensional $\alpha$-regular $p$-exponential measure in $\ell_2$.}

The next result studies the Besov-type regularity of draws from $\pe$ and  justifies the name \emph{$\alpha$-regular}.

 \begin{lemma}\label{lem:bessup}
Assume $\pe$ is a $d$-dimensional $\alpha$-regular $p$-exponential measure in $\ell_2$. Then for any $q\geq 1$, we have that $\pe(B^s_q)=1$ for all $s<\alpha,$ and $\pe(B^s_q)=0$ for all $s\geq \alpha$. 
\end{lemma}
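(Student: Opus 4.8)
The plan is to reduce the statement to the almost-sure convergence of a series of independent scalars. First I would insert $u_\ell=\gamma_\ell\xi_\ell$ with $\gamma_\ell=\ell^{-\frac12-\frac{\alpha}{d}}$ into the series defining the $B^s_q$-norm in Definition~\ref{defn:besov}: since $|u_\ell|^q=\ell^{-\frac q2-\frac{q\alpha}{d}}|\xi_\ell|^q$, one gets
\[
\sum_{\ell=1}^\infty \ell^{q\left(\frac{s}{d}+\frac12\right)-1}|u_\ell|^q=\sum_{\ell=1}^\infty \ell^{\frac{q(s-\alpha)}{d}-1}|\xi_\ell|^q ,
\]
so that $\pe(B^s_q)$ is precisely the probability that the right-hand series is finite. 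The set $B^s_q$ is a linear subspace of $\R^\infty$ (indeed a normed space, by Minkowski's inequality for the weighted $\ell_q$-norm, $q\ge1$) and is Borel measurable, being the countable union over $M\in\N$ of the sets $A_M=\bigcap_{N\in\N}\{u:\sum_{\ell=1}^N \ell^{q(\frac{s}{d}+\frac12)-1}|u_\ell|^q\le M\}$, each of which is closed in the product topology since it depends continuously on finitely many coordinates. Hence Proposition~\ref{prop:01} gives $\pe(B^s_q)\in\{0,1\}$, and it only remains to decide which value occurs.

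Set $\beta=\beta(s)=\frac{q(s-\alpha)}{d}-1$ and $Y_\ell=\ell^{\beta}|\xi_\ell|^q$. Because $\xi_\ell$ has density proportional to $\exp(-|x|^p/p)$ with $p\in[1,2]$, all its moments are finite; in particular $c:=\E|\xi_1|^q\in(0,\infty)$ and $\E|\xi_1|^{2q}<\infty$, so $\var(|\xi_1|^q)<\infty$. If $s<\alpha$ then $\beta<-1$, so by Tonelli $\E\sum_\ell Y_\ell=c\sum_\ell\ell^{\beta}<\infty$, whence $\sum_\ell Y_\ell<\infty$ almost surely and $\pe(B^s_q)=1$.

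For the divergence it suffices to treat $s=\alpha$, i.e.\ $\beta=-1$: for $s>\alpha$ one has $\beta>-1$, hence $Y_\ell\ge\ell^{-1}|\xi_\ell|^q$ term by term, so divergence at $s=\alpha$ propagates to all $s\ge\alpha$. When $\beta=-1$ I would apply Kolmogorov's convergence criterion to the independent, centred, square-integrable variables $(|\xi_\ell|^q-c)/\ell$: their variances are summable, $\sum_\ell \var(|\xi_1|^q)/\ell^2<\infty$, so $\sum_\ell (|\xi_\ell|^q-c)/\ell$ converges almost surely to a finite limit. Since $\sum_{\ell=1}^N c/\ell=c\log N+O(1)\to\infty$, this forces $\sum_{\ell=1}^N |\xi_\ell|^q/\ell\to\infty$ almost surely; thus $\pe(B^\alpha_q)=0$, and therefore $\pe(B^s_q)=0$ for every $s\ge\alpha$.

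The only subtle point is the borderline case $s=\alpha$. There a naive first-moment bound is inconclusive because $\sum_\ell\E Y_\ell=\infty$ while $Y_\ell\to0$, and one genuinely needs to exploit independence together with the finite second moment of $|\xi_\ell|^q$ via Kolmogorov's criterion (equivalently, the three-series theorem); the remainder of the argument is elementary bookkeeping.
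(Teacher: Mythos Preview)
Your proof is correct. Both you and the paper handle $s<\alpha$ identically (finite first moment via Tonelli), and both invoke the zero-one law (Proposition~\ref{prop:01}) to reduce to a dichotomy, but the treatment of $s\ge\alpha$ genuinely differs.

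The paper argues by contrapositive: assuming $\pe(B^s_q)=1$, it first uses the law of large numbers to get the crude bound $s<\alpha+d/q$, then invokes a result on series of independent non-negative variables (\cite[Proposition~4.14]{OV06}) to deduce $\sum_\ell\E[\zeta_\ell\wedge1]<\infty$ for $\zeta_\ell=\ell^{q(s-\alpha)/d-1}|\xi_\ell|^q$, and finally compares $\E\zeta_\ell$ to $\E[\zeta_\ell\wedge1]$ via a tail estimate to force $\sum_\ell\E\zeta_\ell<\infty$, hence $s<\alpha$. Your route is more direct and more elementary: you isolate the borderline $s=\alpha$, apply Kolmogorov's one-series criterion to the centred variables $(|\xi_\ell|^q-c)/\ell$ (using that $\E|\xi_1|^{2q}<\infty$, available since $p$-exponential tails kill all moments), and read off divergence from the harmonic growth of the mean; the case $s>\alpha$ then follows by termwise monotonicity. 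Your argument avoids the external reference and the tail computation, at the mild cost of needing the $2q$-th moment of $|\xi_1|$ rather than just the $q$-th; since all moments are finite here, this is free.
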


We next study the concentration function $\varphi_w(\cdot)$ of $\pe$, defined for centers $w\in \ell_2$; see Definition \ref{defn:conc} where $X=\ell_2$.
The following lemma identifies the space $\om$ in which we approximate the center $w\in\ell_2$ in the first term of $\varphi_w$, as well as the shift space $\sh$. It follows immediately from Proposition \ref{p:shift} and Definitions \ref{defn:om} and \ref{defn:besov}. 

\begin{lemma}\label{lem:oml2}
Assume $\pe$ is a $d$-dimensional $\alpha$-regular $p$-exponential measure in $\ell_2$.  
Then $\om=\oma:=B^{\alpha+\frac{d}p}_p$ and $\sh=\sha:=B^{\alpha+\frac{d}2}_2$. 
\end{lemma}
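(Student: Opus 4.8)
The plan is to simply unravel the two definitions and match exponents. By Proposition \ref{p:shift}, the space of admissible shifts is $\sh(\pe)=\{h\in\R^\infty:\sum_{\ell=1}^\infty h_\ell^2\gamma_\ell^{-2}<\infty\}$ with norm $\norm{h}_\sh=(\sum_\ell h_\ell^2\gamma_\ell^{-2})^{1/2}$, and by Definition \ref{defn:om} the space $\om(\pe)=\{h\in\R^\infty:\sum_{\ell=1}^\infty|h_\ell|^p\gamma_\ell^{-p}<\infty\}$ carries the norm $\norm{h}_\om=(\sum_\ell|h_\ell|^p\gamma_\ell^{-p})^{1/p}$. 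For the $d$-dimensional $\alpha$-regular $p$-exponential measure we have $\gamma_\ell=\ell^{-1/2-\alpha/d}$, hence $\gamma_\ell^{-q}=\ell^{q(\alpha/d+1/2)}$ for every $q\geq1$. So the whole proof is just exponent bookkeeping against the weight $\ell^{q(s/d+1/2)-1}$ defining $B^s_q$ in Definition \ref{defn:besov}.

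First I would treat $\sh$. Substituting $q=2$ in the weight of $B^s_2$ gives $\ell^{2(s/d+1/2)-1}=\ell^{2s/d}$, while the weight appearing in $\norm{\cdot}_\sh^2$ is $\gamma_\ell^{-2}=\ell^{2(\alpha/d+1/2)}=\ell^{2\alpha/d+1}$. These two sequences coincide precisely when $2s/d=2\alpha/d+1$, that is $s=\alpha+d/2$. Therefore the summability condition defining $\sh(\pe)$ is literally that defining $B^{\alpha+d/2}_2$, and moreover $\norm{h}_\sh=\norm{h}_{B^{\alpha+d/2}_2}$; hence $\sh=\sha:=B^{\alpha+d/2}_2$.

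Next I would treat $\om$ identically: the weight in $\norm{\cdot}_\om^p$ is $\gamma_\ell^{-p}=\ell^{p(\alpha/d+1/2)}$, which equals the $B^s_p$ weight $\ell^{p(s/d+1/2)-1}$ exactly when $p(s/d+1/2)-1=p(\alpha/d+1/2)$, i.e. $s=\alpha+d/p$; so $\om=\oma:=B^{\alpha+d/p}_p$, again with matching norms. There is no genuine obstacle here; the only point worth double-checking is that the $-1$ shift in the Besov weight is absorbed correctly, which is exactly what pushes the smoothness indices up to $\alpha+d/2$ and $\alpha+d/p$ rather than leaving them at $\alpha$, and which makes the continuous embedding $\oma\hookrightarrow\sha$ transparent (for $p\in[1,2]$, $\alpha+d/p\geq\alpha+d/2$).
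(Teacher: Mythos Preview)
Your proposal is correct and is exactly the approach the paper intends: the paper states only that the lemma ``follows immediately from Proposition \ref{p:shift} and Definitions \ref{defn:om} and \ref{defn:besov}'', and your exponent bookkeeping is precisely that immediate verification.
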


We next study the blow-up rate of the concentration function $\varphi_w(\epsilon)$ as $\epsilon\to0$ in the present setting. In particular, we find upper bounds on the minimal solution $\epsilon_n$ of the inequality  $\varphi_{w_0}(\epsilon_n)\leq n\epsilon_n^2$ depending on the Besov-type regularity of $w_0$. 

\begin{proposition}\label{prop:ubl2}
Assume that $\pe$ is a $d$-dimensional $\alpha$-regular $p$-exponential measure in $\ell_2$  and that $w_0\in B^\beta_q$ for $\beta>0\vee (\frac{d}q-\frac{d}2)$, $q\geq1$, $p\in[1,2]$. Then as $n\to\infty$
the rate $\epsilon_n\asymp r_n^{\alpha,\beta,p, q}$ satisfies the inequality $\varphi_{w_0}(\epsilon_n)\leq n\epsilon_n^2$, for constants which depend on $w_0$ only through its $B^\beta_q$ norm and
for $ r_n^{\alpha,\beta,p, q}$ given below:

\begin{enumerate}
\item[i)]For $q\geq 2$ 
\[r_n^{\alpha,\beta,p,q}:=\left\{\begin{array}{ll}n^{-\frac{\beta}{d+2\beta+p(\alpha-\beta)}}, &  
\;\mbox{if $\alpha\geq \beta$,} 
                                     \\ n^{-\frac{\alpha}{d+2\alpha}}, &  \;\mbox{if $\alpha<\beta$}.
                                    \end{array}\right.\]
\item[ii)]For $q<2$ and $p\leq q$, letting $a=\sqrt{2\beta d p+\beta^2p^2+d^2(1+2p-4p/q)}$
\[r_n^{\alpha,\beta,p,q}:=
	\left\{\begin{array}{ll}n^{-\frac{2\beta q+d(q-2)}{4d(q-1)+4\beta q+2pq(\alpha-\beta)}}, &  
\;\mbox{if $\alpha\geq \frac{\beta p -d+a}{2p}$,} 
                                     \\ n^{-\frac{\alpha}{d+2\alpha}}, &  \;\mbox{if $\alpha<\frac{\beta p -d+a}{2p}$}.
                                    \end{array}\right.\]
\item[iii)]For $q< 2$ and $p>q$, letting $a=\sqrt{\frac{2\beta dq(2q-p)+\beta^2pq^2+d^2(p+2q^2-4q)}{p}}$
\[r_n^{\alpha,\beta,p,q}:=
	\left\{\begin{array}{ll}n^{-\frac{2\beta q+d(q-2)}{2d(p+q-2)+4\beta q+2pq(\alpha-\beta)}}, &  
\;\mbox{if $\alpha\geq \frac{\beta q-d+a}{2q}$,} 
                                     \\ n^{-\frac{\alpha}{d+2\alpha}}, &  \;\mbox{if $\alpha<\frac{\beta q-d+a}{2q}$}.
                                    \end{array}\right.\]
\end{enumerate}
\end{proposition}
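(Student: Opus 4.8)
The plan is to estimate the concentration function $\varphi_{w_0}(\epsilon) = \inf_{h\in\om:\,\norm{h-w_0}_{\ell_2}\le\epsilon}\frac1p\norm{h}_{\om}^p - \log\pe(\epsilon B_{\ell_2})$ by separately bounding its two terms, and then to solve the resulting inequality $\varphi_{w_0}(\epsilon_n)\le n\epsilon_n^2$ for the smallest $\epsilon_n$. By Lemma \ref{lem:oml2} we have $\om = \oma = B^{\alpha+d/p}_p$, so the first (approximation) term asks: how well can a $w_0\in B^\beta_q$ be approximated in $\ell_2$ by elements of $B^{\alpha+d/p}_p$, with control on the $B^{\alpha+d/p}_p$-norm of the approximant? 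The natural choice is truncation: set $h=h^{(N)}$ to be $w_0$ restricted to its first $N$ coordinates (zero thereafter). Then $\norm{h^{(N)}-w_0}_{\ell_2}^2 = \sum_{\ell>N}|w_{0,\ell}|^2$, which one bounds using $w_0\in B^\beta_q$ and H\"older's inequality (the extra term $(\frac dq-\frac d2)$ in the hypothesis on $\beta$ is exactly what makes this tail summable when $q<2$), giving a bound of order $N^{-2\beta/d}$ up to the correction from $q$. Simultaneously $\norm{h^{(N)}}_{\om}^p = \sum_{\ell\le N}\ell^{p(\frac{\alpha}d+\frac1p+\frac12-\frac1p)\cdot\text{(weights)}}|w_{0,\ell}|^p$; rewriting the $\om$-weights against the $B^\beta_q$-weights and applying H\"older again produces a bound of the form $N^{(\text{power})}$ (times the $B^\beta_q$-norm of $w_0$), the power depending on the sign of $\alpha-\beta$ and on whether $p\le q$ or $p>q$. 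Optimizing the trade-off between the approximation error $\epsilon$ and $N$ yields the first term of $\varphi_{w_0}(\epsilon)$ as a power of $\epsilon$.

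For the second (small-ball) term, I would invoke a centered small-ball estimate for $\alpha$-regular $p$-exponential measures in $\ell_2$: $-\log\pe(\epsilon B_{\ell_2}) \asymp \epsilon^{-d/\alpha}$. For $p=2$ this is the classical Gaussian small-ball bound for such eigenvalue decay; for general $p\in[1,2]$ it should follow from a direct computation with the product structure (estimating $\mathbb P(\sum_\ell \gamma_\ell^2\xi_\ell^2\le\epsilon^2)$ via a Laplace-transform/Chernoff argument, or by citing the relevant small-ball result — this is presumably an earlier lemma in the full paper, e.g. a companion to Lemma \ref{lem:bessup}). The key point is that the exponent $d/\alpha$ is the same as in the Gaussian case, since it is governed only by the decay rate of $\gamma$, not by $p$.

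Combining, $\varphi_{w_0}(\epsilon) \lesssim \epsilon^{-\rho_1} + \epsilon^{-d/\alpha}$ for an exponent $\rho_1$ coming from the approximation analysis; the inequality $\varphi_{w_0}(\epsilon_n)\le n\epsilon_n^2$ then splits into two scalar inequalities, $\epsilon_n^{-\rho_1}\le n\epsilon_n^2$ and $\epsilon_n^{-d/\alpha}\le n\epsilon_n^2$, whose minimal solutions are $n^{-1/(2+\rho_1)}$ and $n^{-\alpha/(d+2\alpha)}$ respectively; the rate is the larger (slower) of the two. The case distinctions in i), ii), iii) arise precisely because $\rho_1$, and hence which of the two competing rates dominates, depends on the regime: $q\ge2$ versus $q<2$, and within the latter $p\le q$ versus $p>q$; the threshold value of $\alpha$ (written via the quantity $a$) is where $n^{-1/(2+\rho_1)} = n^{-\alpha/(d+2\alpha)}$, i.e. where the approximation term and the small-ball term balance. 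I expect the main obstacle to be the bookkeeping in the approximation term: carefully applying H\"older with the three competing sequence-space exponents $\beta/q$ (truncation constraint), $(\alpha+d/p)/p$ ($\om$-norm), and getting the exponent of $N$ right in each of the regimes, then doing the $\epsilon$–$N$ optimization cleanly enough that the stated closed forms (including the explicit $a$'s) come out. A secondary subtlety is verifying the small-ball bound for $p\ne2$ with the sharp exponent, if it is not simply quoted. Once the two exponents are in hand, solving for $\epsilon_n$ and reading off the threshold on $\alpha$ is elementary algebra.
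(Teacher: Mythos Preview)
Your proposal is correct and follows essentially the same route as the paper: the paper splits $\varphi_{w_0}$ into the approximation term and the centered small-ball term, bounds the first via truncation $h_{1:L}$ and H\"older exactly as you describe (this is Lemma~\ref{lem:infl2}), bounds the second by $\epsilon^{-d/\alpha}$ as a direct citation of a small-ball result of Aurzada (this is Lemma~\ref{lem:l2sb}), and then determines which term dominates---for $q<2$ the balancing condition indeed yields a quadratic in $\alpha$ whose positive root gives the threshold written via $a$. Your anticipated obstacles (the H\"older bookkeeping in the three regimes and sourcing the small-ball exponent for $p\neq2$) are precisely the content of those two lemmas.
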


The proof of the last proposition, builds on Lemmas \ref{lem:l2sb} and \ref{lem:infl2} in Subsection \ref{sec:conc} below, in which we estimate the two terms of the concentration function. Notice that the assumption $\beta>0\vee(\frac{d}q-\frac{d}2)$ secures that $w_0\in\ell_2$, see {Lemma G.1 in the supplement below.}

Consider a nonparametric inference problem in a separable Hilbert parameter space $X$, where the $X$-norm relates suitably to the statistically relevant norms for the model and there exist exponentially powerful tests for separating the truth from balls in $X$ at a certain distance from it. In the assumed separable Hilbert setting, we can verify that for $\epsilon_n$ the rate in Proposition \ref{prop:ubl2},  the maximum appearing in the right hand side of the complexity bound \eqref{eq:adh22} in our general contraction Theorem \ref{t:main}, is dominated by $n\epsilon_n^2$; see Lemma \ref{lem:l2dom} in Subsection \ref{sec:conc}. Together with Proposition \ref{prop:ubl2} and Theorem \ref{t:main}, this suggests that if we use as prior a $d$-dimensional $\alpha$-regular $p$-exponential measure in $\ell_2$ identified with a measure on $X$ via a series expansion in an orthonormal basis of $X$, then $r_n^{\alpha,\beta,p, q}$ is an upper bound on the posterior contraction rate when the truth belongs to  $B^\beta_q$. Here we identify Besov regularity in $X$ with Besov regularity of the sequence of coefficients in the orthonormal basis. 
For example, in the white noise model combining the last two results with Theorem \ref{thm:wn}, we get immediately the next result for $\alpha$-regular $p$-exponential priors in $L_2[0,1]$ ({here $d=1$}).

\begin{theorem}\label{thm:wn2}
Consider the white noise model of Subsection \ref{ssec:wn}, and let $\Pi=\pe$ be an $\alpha$-regular $p$-exponential prior in $L_2[0,1]$, $\alpha>0, p\in[1,2]$. Assume $w_0\in B^\beta_q$, ${\beta>0\vee (\frac1q-\frac12)}, q\geq1$ and let $r_n^{\alpha,\beta,p,q}$ be defined as in Proposition \ref{prop:ubl2}. Then  for $M$ large enough, 
 as $n\to\infty$
\[\Pi_n( w\in L_2[0,1]: \norm{w-w_0}_{L_2}\leq Mr_n^{\alpha,\beta,p, q}|X^{(n)})\to1,\] in $P^{(n)}_{w_0}$-probability. 
\end{theorem}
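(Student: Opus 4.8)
The plan is to derive Theorem~\ref{thm:wn2} as a direct corollary of the general white noise contraction result, Theorem~\ref{thm:wn}, together with the concentration function bounds in Proposition~\ref{prop:ubl2} and the complexity-domination lemma for the Hilbert setting. First I would invoke Proposition~\ref{prop:ubl2}: since $w_0\in B^\beta_q$ with $\beta>0\vee(\tfrac1q-\tfrac12)$ and $d=1$, there is a constant so that $\epsilon_n\asymp r_n^{\alpha,\beta,p,q}$ satisfies the rate equation $\varphi_{w_0}(\epsilon_n)\leq n\epsilon_n^2$ with respect to the $L_2$-norm, where the implied constants depend on $w_0$ only through $\|w_0\|_{B^\beta_q}$. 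Note that $n\epsilon_n^2 = n(r_n^{\alpha,\beta,p,q})^2\to\infty$ in all cases (the exponent in $r_n$ is strictly between $0$ and $1/2$), so the hypothesis $n\epsilon_n^2\gtrsim1$ of Theorem~\ref{t:main} holds.

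Next I would check that the complexity bound \eqref{eq:adh22} holds with $n\epsilon_n^2$ on the right-hand side, which is exactly the extra hypothesis needed in Theorem~\ref{thm:wn} beyond the rate equation. This is the content of Lemma~\ref{lem:l2dom} in Subsection~\ref{sec:conc}: for the $\alpha$-regular $p$-exponential prior in $\ell_2$ and the functions $f,g$ one verifies to satisfy \eqref{eq:inf} in this Hilbert setting, the term $f(n^{1/2}\epsilon_n)g(\epsilon_n)^{1-p/2}$ is dominated by $n\epsilon_n^2$ when $\epsilon_n\asymp r_n^{\alpha,\beta,p,q}$. Here I would use the identification $\om=\oma=B^{\alpha+1/p}_p$ and $\sh=\sha=B^{\alpha+1/2}_2$ from Lemma~\ref{lem:oml2} (with $d=1$) to see that the approximation inequality \eqref{eq:inf} relevant to the Hilbert case gives, after Lemma~\ref{lem:l2dom}, that the maximum in \eqref{eq:adh22} reduces to $n\epsilon_n^2$. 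With that in hand, all three assertions \eqref{eq:adh22}--\eqref{eq:adh24} of Theorem~\ref{t:main} hold with this $\epsilon_n$ and the right-hand side of the entropy bound equal to $R n\epsilon_n^2$.

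Finally I would appeal to Theorem~\ref{thm:wn} directly: with $W$ the $\alpha$-regular $p$-exponential prior in $L_2[0,1]$, $w_0$ in its support (which is all of $L_2[0,1]$ by Proposition~\ref{prop:supp2}, so in particular $w_0\in B^\beta_q\subset L_2[0,1]$ qualifies), $\epsilon_n\asymp r_n^{\alpha,\beta,p,q}$ satisfying the rate equation and the entropy bound with $n\epsilon_n^2$ on the right, Theorem~\ref{thm:wn} yields $\Pi_n(w:\|w-w_0\|_{L_2}>M\epsilon_n\mid X^{(n)})\to0$ in $P^{(n)}_{w_0}$-probability for some $M>0$, which is the claimed statement (rephrased as convergence of the complementary event to $1$, and absorbing the $\asymp$ constant into $M$). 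The only genuinely substantive ingredients are Proposition~\ref{prop:ubl2} and the domination Lemma~\ref{lem:l2dom}, both established elsewhere; the main obstacle in a self-contained account would be the case analysis in the domination step, namely verifying across the several regimes of $(\alpha,\beta,p,q)$ in Proposition~\ref{prop:ubl2} that the potentially larger second term in \eqref{eq:adh22} never exceeds $n\epsilon_n^2$ — but given those earlier results, the present theorem is an immediate combination.
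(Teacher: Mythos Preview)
Your proposal is correct and follows essentially the same approach as the paper: the authors state that Theorem~\ref{thm:wn2} follows immediately by combining Proposition~\ref{prop:ubl2} (the rate equation), Lemma~\ref{lem:l2dom} (the complexity domination in the Hilbert setting), and Theorem~\ref{thm:wn} (the general white noise contraction result), which is precisely what you do. Your additional remarks about $n\epsilon_n^2\to\infty$ and the support condition via Proposition~\ref{prop:supp2} are correct fillers of details the paper leaves implicit.
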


Notice that since in Proposition \ref{prop:ubl2} all the constants depend on $w_0$ only through its $B^\beta_q$-norm, the rates of contraction in the above theorem hold uniformly for $w_0$ in $B^\beta_q$-balls.

The last result generalizes existing contraction results in the conjugate setting of the white noise model with Gaussian priors and under Sobolev-type regularity of the truth, $p=q=2$; see \cite[Theorem 5.1]{LZ00} and \cite[Theorem 2.1]{BG03}, as well as \cite[Theorem 2]{IC08} which discusses the sharpness of the Gaussian contraction rates. Note that in our setting, unless $p=2$, the $p$-exponential prior is non-conjugate to the Gaussian likelihood of the white noise model. However, explicit calculations are possible to get upper bounds on the rate of posterior contraction, see \cite[Corollary 3]{CN13}, for Sobolev-type regularity of the truth $q=2$, when $\alpha\leq\beta$. Our result agrees with the existing rates in both aforementioned special cases, but goes further and in particular studies rates of contraction under Besov-type smoothness, that is the more intricate case $q\neq2$. 

Minimax rates in $L_2$-loss, for function estimation in the white noise model under Besov-regularity, have been studied in \cite{DJ98}; see also \cite[Chapter 10]{HKPT98} for density estimation again in $L_2$-loss. The results there, show that for all {$q\geq1$ and for $\beta>1/q$ or $\beta\geq1$ when $q=1$, the minimax rate is 
\mods{\begin{equation}\label{eq:nlminimax}
m_n:=n^{-\frac{\beta}{1+2\beta}}. 
\end{equation}}
An interesting feature, is that for $q<2$ linear estimators do not achieve the minimax rate, and instead only achieve the rate
\mods{\begin{equation}\label{eq:lminimax}
l_n:=n^{-\frac{\beta-\gamma/2}{1+2\beta-\gamma}},
\end{equation} }where $\gamma:=\frac{2-q}q>0$. For $q\geq2$, linear estimators do achieve the minimax rate $m_n$. This change of behaviour is attributed to the fact that, for $q<2$, functions in $B^\beta_q$ are not in general spatially homogeneous, but instead can be irregular in some parts and smooth in other parts. As explained in \cite{DJ98}, linear estimators cannot cope well with this inhomogeneity and either oversmooth the irregular part, or undersmooth the smooth part, or both. 

\begin{remark}[Results for $q\geq2$]\label{rem:l2rates}
An inspection of the bounds in Theorem \ref{thm:wn2}, reveals that for $q\geq2$, the particular value of $q$ does not influence the contraction rate. When $\beta=\alpha$, we get the minimax rate $m_n$, \mods{see \eqref{eq:nlminimax}}, independently of $p\in[1,2]$. In the case of an undersmoothing prior, $\beta>\alpha$, the rates for  all $p\in[1,2]$ coincide and are slower than the minimax rate. Finally, for an oversmoothing prior, $\beta<\alpha$, the rate is faster the smaller $p$ is. This is reasonable, since for smaller $p$ there is a higher probability of $\xi_\ell$ in the definition of the $p$-exponential prior having large values, which counteracts the oversmoothing effect of the prior-scaling sequence $\gamma_\ell=\ell^{-\frac12-\alpha}$.
\end{remark}

\begin{remark}[Results for $q<2$]\label{rem:l2ratesqle2}
An inspection of the bounds in Theorem \ref{thm:wn2}, reveals that for $q<2$, the particular value of $q$ does influence the contraction rate and we do not get the minimax rate $m_n$ \mods{defined in \eqref{eq:nlminimax}}, for any admissible combination of $\alpha, \beta, p , q$. The best rates are achieved for $p=q$ and are better than the linear minimax $l_n$, \mods{see \eqref{eq:lminimax}}, while Gaussian priors appear to have the worst performance and to be limited by the linear minimax rate:
\begin{enumerate}
\item[i)] Let $q<2$, $p\leq q$ and fix $\beta>\frac1q$ or $\beta\geq1$ if $q=1$ {(so that the minimax and linear minimax rates in \cite{DJ98} hold).} For small $\alpha>0$, the rate is $n^{-\frac{\alpha}{1+2\alpha}}$ which is suboptimal but improves as $\alpha$ increases. Since the other leg of the bound deteriorates as $\alpha$ increases, to achieve the minimax rate $m_n$, the bound $n^{-\frac{\alpha}{1+2\alpha}}$ needs to hold for $\alpha$ all the way up to $\alpha=\beta$. An easy calculation shows that the switching point between the two legs, $\alpha=\frac{\beta p -1+a}{2p}$, is smaller than $\beta$ but larger than $\beta-\frac{\gamma}2$. This implies that we cannot achieve the minimax rate, but there are values of $\alpha$ for which we achieve rates faster than the linear minimax rate $l_n$. In fact, one can check that for larger $p$ (that is $p\leq q$ closer to $q<2$), the switching point gets closer to $\beta$, hence we can get closer to the minimax rate for suitable values of $\alpha$. However, even for $p=q$ we cannot reach the minimax rate.
\item[ii)] Let $q<2$, $p> q$ and fix $\beta>\frac1q$ or $\beta\geq1$ if $q=1$. The reasoning is the same as in item (i): the switching point in the rate, $\frac{\beta q -1+a}{2q}$, is for all $p\in(q,2]$ smaller than $\beta$, for $p\in(q,2)$ larger than $\beta-\frac{\gamma}2$ and for $p=2$ equal to $\beta-\frac{\gamma}2$. Hence we cannot achieve the minimax rate $m_n$ for any $p\in(q,2]$, for $p\in(q,2)$ there are values of $\alpha$ achieving rates faster than the linear minimax rate $l_n$ and for the Gaussian case, $p=2$, the best we can achieve is the linear minimax rate $l_n$. One can check that for $p$ smaller (that is $p$ closer to $q$), the switching point gets closer to $\beta$, hence we can get closer to the minimax rate for suitable values of $\alpha$. {Note, that at this stage it is not clear whether Gaussian priors are fundamentally limited by the linear minimax rate,} {see the discussion in Section H of the supplement below.}
\end{enumerate}
\end{remark}

{As detailed in the last remark, for $w_0\in B^\beta_q$ with $q<2$, for all $p\in[1,2]$, the best rate is achieved by an undersmoothing prior and this rate is not minimax. This is due to the infimum term in the concentration function dominating the centered small ball probability term, already for large enough $\alpha<\beta$. This motivates using \emph{rescaled} $p$-exponential priors in the next subsection, with a vanishing scaling as $n\to\infty$: the idea is to use an undersmoothing prior such that without rescaling the centered small ball probability term dominates in the concentration function, and choose the scaling to - in some sense - infuse additional regularity to the prior, in particular in order to balance the two terms of the concentration function. We will see that this can lead to minimax rates.}

\subsection{Rescaled $\alpha$-regular $p$-exponential priors in $\ell_2$}\label{ssec:resc}

Consider $\pel$ to be a rescaling of a $d$-dimensional $\alpha$-regular $p$-exponential prior $\pe$ in $\ell_2$, that is we take $\gammal_\ell=\lambda\gamma_\ell$ where $\gamma_\ell=\ell^{-\frac12-\frac{\alpha}d},$ for some $\lambda>0$. Then $\om=\oml, \sh=\shl$, where $\oml, \shl$ coincide with the spaces $\oma, \sha$ in Lemma \ref{lem:oml2}, but with norms $\norm{\cdot}_{\oml}=\lambda^{-1}\norm{\cdot}_{\oma}$ and $\norm{\cdot}_{\shl}=\lambda^{-1}\norm{\cdot}_{\sha}$, respectively. For any $w\in X$, the concentration function of $\pel$ is then given by
\begin{equation}\label{eq:rescf}
\varphil_{w}(\epsilon)=\lambda^{-p}\inf_{h\in \oma: \norm{h-w}_{\ell_2}\leq\epsilon}\norm{h}^p_{\oma}-\log\pe\left(\frac{\epsilon}{\lambda}B_{\ell_2}\right).
\end{equation}

{In the formulation of our results below, we let as in the discussion in the previous subsection
\begin{equation}\label{eq:minimax}m_n:=n^{-\frac{\beta}{d+2\beta}},\end{equation} 
which is the minimax rate of estimation of a $B^\beta_{q}(d)$ sequence under Gaussian white noise in $\ell_2$-loss, for all $q\geq1$ and for $\beta>\frac{d}q$ or $\beta\geq d$ when $q=1$, see \cite{DJ98}.}

We focus on the case $q<2$, since for $q\geq2$ we can achieve the minimax rate with $\alpha$-regular $p$-exponential priors, for $\alpha=\beta$ without rescaling, see Remark \ref{rem:l2rates}. For $q\geq2$, rescaling can still be beneficial, in particular in order to achieve (some degree of) adaptation to unknown smoothness $\beta$; for results for Gaussian priors see \cite{KSVZ12, SVZ13, RS17}. We will investigate adaptation with $p$-exponential priors in a separate study.

\begin{proposition}\label{prop:rescaledrate}
Assume that $\pel=\pel_n$ is a rescaled $d$-dimensional $\alpha$-regular $p$-exponential measure in $\ell_2$, $p\in[1,2]$, corresponding to $\lambda=\lambda_n$.  Let $w_0\in B^\beta_q$ for { $\beta>\frac{d}p\vee\frac{d}q$, $q\in [1,2)$}, \mods{and consider the corresponding minimax rate of estimation in Gaussian white noise in $\ell_2$-loss, $m_n$, as defined in \eqref{eq:minimax}}. Then as $n\to\infty$,
the rate $\epsilon_n\asymp \bar{r}_n$ satisfies the inequality $\bar\varphi_{w_0}(\epsilon_n)\leq n\epsilon_n^2$, for constants which depend on $w_0$ only through its $B^\beta_q$ norm, where $\bar{r}_n=\bar{r}_n^{\alpha,\beta,p, q}$ and $\lambda_n=\lambda_n^{\alpha, \beta, p ,q}$ are given below:
\begin{enumerate}
\item[i)] If $q=p$, $\alpha=\beta-\frac{d}p$, then $\bar{r}_n=m_n$ for $\lambda_n=n^{-\frac{d}{p(d+2\beta)}}$.
\item[ii)] If $q>p$, $\alpha=\beta-\frac{d}p$, then $\bar{r}_n=m_n\log^\frac{d(q-p)}{pq(d+2\beta)}n$ for $\lambda_n=n^{-\frac{d}{p(d+2\beta)}}\log^{\omega}n$, where $\omega=(p-\frac{2d}{d+2\beta})\frac{q-p}{p^2q}>0$.
\end{enumerate}
In all other combinations of $\alpha, \beta, p, q$, for  any choice of $\lambda_n$, $\bar{r}_n$ is polynomially slower than $m_n$. In particular, if $q<p$, then the best achievable rate is $\bar{r}_n=n^{\frac{d(p-q)-\beta p q}{2d(q-p)+2\beta p q+pqd}},$ for $\alpha=\beta-\frac{d}{q}$ and $\lambda_n=n^{-\frac{qd}{2qd+2\beta pq-2pd+pqd}}$.
\end{proposition}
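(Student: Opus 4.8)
The plan is to analyze the two terms of the rescaled concentration function \eqref{eq:rescf} separately as functions of the scaling parameter $\lambda$ and the radius $\epsilon$, then optimize over $\lambda$ for each fixed $n$. For the centered small ball term, I would invoke the small ball estimate of Lemma \ref{lem:l2sb} (cited as underlying Proposition \ref{prop:ubl2}) applied to the unscaled $\alpha$-regular prior $\pe$ but evaluated at radius $\epsilon/\lambda$; this gives $-\log\pe((\epsilon/\lambda)B_{\ell_2})\asymp (\lambda/\epsilon)^{\frac{2d}{2\alpha+d-p\alpha}}$ up to the usual caveats on the exponent, so rescaling by a vanishing $\lambda_n$ \emph{inflates} this term. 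For the infimum (approximation) term, I would use Lemma \ref{lem:infl2} to bound $\inf_{h\in\oma:\norm{h-w_0}_{\ell_2}\le\epsilon}\norm{h}_{\oma}^p$ in terms of $\epsilon$ and the $B^\beta_q$-norm of $w_0$; the prefactor $\lambda^{-p}$ then makes this term \emph{grow} as $\lambda_n\to0$. The key structural point, anticipated in the paragraph preceding the proposition, is that for the \emph{undersmoothing} choice $\alpha=\beta-\tfrac dp$ (so that $w_0\in B^\beta_q$ sits exactly at the boundary of regularity the unscaled prior can resolve) the approximation term without the $\lambda^{-p}$ factor is of the same polynomial order as a term that can be absorbed, and the role of $\lambda_n$ is precisely to rebalance the two legs.

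The main computation is then a three-way balancing: set $\varphil_{w_0}(\epsilon_n)\asymp n\epsilon_n^2$ and simultaneously choose $\lambda_n$ so that the two terms $\lambda_n^{-p}\cdot(\text{approx in }\epsilon_n)$ and $(\lambda_n/\epsilon_n)^{\text{exponent}}$ are of the same order (otherwise one could improve by moving $\lambda_n$). Substituting $\alpha=\beta-\tfrac dp$ into the exponents coming from Lemmas \ref{lem:l2sb}–\ref{lem:infl2}, I expect the small ball exponent $\tfrac{2d}{2\alpha+d-p\alpha}$ to collapse to $\tfrac{2d}{2\beta-d}\cdot\tfrac1{?}$ — in any case to a clean value — and the approximation term, for $q\ge p$, to behave like $\epsilon^{-(2\beta/d -\ldots)}$ times possibly a logarithmic factor when $q>p$ (the log arising because at the boundary case $w_0\in B^\beta_q\subset B^\beta_p$ only with a logarithmic loss in the relevant embedding/interpolation estimate). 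Solving the resulting pair of algebraic equations for $(\epsilon_n,\lambda_n)$ should yield exactly $\bar r_n=m_n$, $\lambda_n=n^{-\frac{d}{p(d+2\beta)}}$ in case (i) and the stated logarithmically-corrected versions in case (ii).

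For the negative/optimality part — that in all other $(\alpha,\beta,p,q)$ combinations no choice of $\lambda_n$ beats $m_n$ polynomially, and the explicit best rate when $q<p$ — I would argue as follows. Treat $\epsilon_n$ as determined by $\lambda_n$ via the rate equation $\varphil_{w_0}(\epsilon_n)\asymp n\epsilon_n^2$, express the resulting $\epsilon_n$ as an explicit function of $\lambda_n$ (a power of $\lambda_n$ times a power of $n$, obtained by whichever of the two terms dominates), and then minimize over $\lambda_n>0$. The minimum over the regime where the small ball term dominates and the minimum over the regime where the approximation term dominates will each be a power of $n$; the overall best is their maximum, attained at the $\lambda_n$ making the two regimes meet. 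Carrying this out with $\alpha$ also free, one optimizes jointly over $(\alpha,\lambda_n)$; the computation shows the joint optimum forces $\alpha=\beta-\tfrac dp$ when $q\ge p$ (giving $m_n$) and $\alpha=\beta-\tfrac dq$ when $q<p$ (giving the stated slower polynomial rate with $\lambda_n=n^{-\frac{qd}{2qd+2\beta pq-2pd+pqd}}$), and that any other $\alpha$ is strictly worse by a polynomial factor.

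\textbf{Main obstacle.} The delicate point is controlling the approximation term $\inf_{h\in\oma:\norm{h-w_0}_{\ell_2}\le\epsilon}\norm h_{\oma}^p$ sharply when $w_0\in B^\beta_q$ with $q<2$ and comparing its behaviour against the $q$-independence of the small ball term; in particular, tracking precisely when a genuine logarithmic factor appears (the $q>p$ case of (i)$\to$(ii)) versus when the bound is clean, and making sure the optimization over $\lambda_n$ is carried out over the correct range so that the claimed rates are not just upper bounds for a particular $\lambda_n$ but are the best over all admissible $\lambda_n$. I expect this to rest entirely on Lemmas \ref{lem:l2sb} and \ref{lem:infl2} together with careful but routine exponent bookkeeping, with no new conceptual ingredient beyond the rescaling idea already flagged in the text.
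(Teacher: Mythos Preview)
Your overall strategy matches the paper's: bound $\varphil_{w_0}(\epsilon)$ via Lemmas \ref{lem:l2sb} and \ref{lem:infl2}, balance the two terms through the choice of $\lambda$, then solve $\varphil_{w_0}(\epsilon_n)\asymp n\epsilon_n^2$. However, two concrete errors would derail your computation. First, the centered small ball exponent from Lemma \ref{lem:l2sb} is simply $d/\alpha$, so $-\log\pe((\epsilon/\lambda)B_{\ell_2})\asymp(\lambda/\epsilon)^{d/\alpha}$; the expression $\tfrac{2d}{2\alpha+d-p\alpha}$ is not correct, and in particular the small ball probability of a $p$-exponential measure in $\ell_2$ does not depend on $p$ at this level. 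Second, and more damaging for the balancing, you have the direction of the $\lambda$-dependence of the small ball term backwards: as $\lambda\to0$ the ball $(\epsilon/\lambda)B_{\ell_2}$ becomes large, its probability tends to $1$, and $-\log\pe(\cdot)$ \emph{decreases}. It is only the approximation term $\lambda^{-p}\cdot(\text{inf})$ that blows up. With both terms moving the same way no balancing would be possible; the whole mechanism is precisely that a vanishing $\lambda$ shrinks one term and inflates the other.

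For the negative part, the paper's argument is cleaner than an ad hoc joint optimization and worth following. Write the bound as $\varphil_{w_0}(\epsilon)\lesssim\lambda^{-p}\epsilon^{-s}+(\epsilon/\lambda)^{-d/\alpha}$ with $s=s(\alpha,\beta,p,q)\ge0$ read off Lemma \ref{lem:infl2}; balancing gives $\lambda\asymp\epsilon^{(d-\alpha s)/(d+\alpha p)}$ and $\epsilon_n\asymp n^{-(2+(dp+ds)/(d+\alpha p))^{-1}}$. This equals $m_n$ if and only if $s$ hits the critical value $s_0:=(d+\alpha p-\beta p)/\beta$, so the whole case analysis reduces to computing $s-s_0$ from the explicit exponents in Lemma \ref{lem:infl2}. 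One checks $s\ge s_0$ in every regime, with equality only when $q=p$ and $\alpha=\beta-d/q$; this disposes of ``polynomially slower in all other cases'' at once and makes the subsequent optimization over $\alpha$ for $q<p$ straightforward. The boundary case $q>p$, $\alpha=\beta-d/p$ falls outside this scheme because the approximation term is logarithmic rather than polynomial in $\epsilon$; the resulting equation is inverted via a standard asymptotic inversion lemma (the paper quotes \cite{MP03}).
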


{Notice, that the assumption $\beta>\frac{d}p\vee\frac{d}q$, is in place in order to secure that, for a fixed $\beta$, the values of $\alpha$ corresponding to the best achievable rates, are positive hence admissible, simultaneously for all combinations of $p, q$. This facilitates the comparison between the different choices of $p, q$. However, it is possible to compute (slower than minimax) rates with rescaled $p$-exponential priors for smaller values of $\beta>0\vee(\frac{d}q-\frac{d}2)$, for $\alpha>0$, with the same techniques used in the proof of Proposition \ref{prop:rescaledrate}.}

As in the previous subsection, we can verify that for $\epsilon_n$ the rate in Proposition \ref{prop:rescaledrate}, the maximum appearing in the right hand side of the complexity bound \eqref{eq:adh22} in our general contraction Theorem \ref{t:main}, is dominated by $n\epsilon_n^2$; see Lemma \ref{lem:l2domresc} in Subsection \ref{sec:conc}. This can be combined  with Proposition \ref{prop:rescaledrate} and Theorem \ref{t:main}, in order to get contraction rates in suitable nonparametric problems in separable Hilbert spaces, under rescaled $d$-dimensional $p$-exponential priors. In particular, in the white noise model, combining the last two results with Theorem \ref{thm:wn}, we get immediately  the next result for rescaled $\alpha$-regular $p$-exponential priors in $L_2[0,1]$ (here $d=1$).

\begin{theorem}\label{thm:wn3}
Consider the white noise model of Subsection \ref{ssec:wn}, and let $\Pi_n=\pel_n$ be a rescaled $\alpha$-regular $p$-exponential prior in $L_2[0,1]$, $\alpha>0, p\in[1,2]$. Assume $w_0\in B^\beta_q$, ${\beta>\frac{d}p\vee \frac{d}q}, q\in[1,2)$ and let $\bar{r}_n$ and $\lambda_n$ be defined as in Proposition \ref{prop:rescaledrate}. Then  for $M$ large enough, as $n\to\infty$
\[\Pi_n( w\in L_2[0,1]: \norm{w-w_0}_{L_2}\leq M\bar{r}_n|X^{(n)})\to1,\] in $P^{(n)}_{w_0}$-probability. 
\end{theorem}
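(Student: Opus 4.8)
The plan is to derive Theorem~\ref{thm:wn3} as an immediate consequence of the rescaled concentration function estimate in Proposition~\ref{prop:rescaledrate}, combined with the general white noise contraction result Theorem~\ref{thm:wn} and the complexity bound verification in Lemma~\ref{lem:l2domresc}. First I would note that a rescaled $\alpha$-regular $p$-exponential prior $\pel_n$ in $L_2[0,1]$ is itself a $p$-exponential measure (with scaling sequence $\gammal_\ell = \lambda_n\ell^{-1/2-\alpha}$), whose support is all of $L_2[0,1]$ by Proposition~\ref{prop:supp2}, so the structural hypotheses of Theorem~\ref{thm:wn} are met and $w_0\in L_2[0,1]$ holds automatically under the assumption $\beta > \frac{d}{p}\vee\frac{d}{q}$ (with $d=1$), which is more than enough to guarantee $w_0\in B^\beta_q\subset\ell_2$.

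Next I would invoke Proposition~\ref{prop:rescaledrate} with $d=1$: for each of the listed combinations of $(\alpha,\beta,p,q)$ and the prescribed choice of $\lambda_n$, the rate $\epsilon_n\asymp\bar r_n$ solves the rate inequality $\bar\varphi_{w_0}(\epsilon_n)\leq n\epsilon_n^2$ with respect to the $\ell_2$-norm (equivalently, the $L_2[0,1]$-norm after identification via the orthonormal basis), with constants depending on $w_0$ only through $\norm{w_0}_{B^\beta_q}$. To apply Theorem~\ref{thm:wn} I also need the complexity assertion, namely that \eqref{eq:adh22} holds with $n\epsilon_n^2$ on the right-hand side. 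This is precisely what Lemma~\ref{lem:l2domresc} provides: for the rates of Proposition~\ref{prop:rescaledrate}, the second term $f(n^{1/2}\epsilon_n)g(\epsilon_n)^{1-p/2}$ in the maximum in \eqref{eq:adh22} is dominated by $n\epsilon_n^2$, so the right-hand side reduces to $Rn\epsilon_n^2$. Feeding these two facts into Theorem~\ref{thm:wn} yields $\Pi_n(w: \norm{w-w_0}_{L_2} > M\bar r_n \mid X^{(n)})\to 0$ in $P^{(n)}_{w_0}$-probability for $M$ large, which is the claim after passing to the complementary event.

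One subtlety to address explicitly is that the rescaling $\lambda_n$ is itself $n$-dependent, so strictly speaking the prior changes with $n$. This is harmless: Theorem~\ref{thm:wn} (which rests on Theorem~\ref{t:main} and \cite[Theorem 8.31]{GV17}) is a statement for each fixed $n$ about the single $p$-exponential measure $\pel_n$, and the constants $R$ in \eqref{eq:adh22}--\eqref{eq:adh24} and $M$ in the conclusion can be taken uniform in $n$ because, by Proposition~\ref{prop:rescaledrate} and Lemma~\ref{lem:l2domresc}, all the implicit constants depend on $\lambda_n$ and $w_0$ only through bounds that are uniform along the prescribed sequence $\lambda_n$. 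I would also remark, as in the paragraph following Theorem~\ref{thm:wn2}, that since the constants depend on $w_0$ only through $\norm{w_0}_{B^\beta_q}$, the contraction is uniform over $B^\beta_q$-balls.

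The step I expect to require the most care is verifying that the implicit constants remain controlled uniformly in $n$ despite the $n$-dependent rescaling — in other words, checking that Lemma~\ref{lem:l2domresc} and Proposition~\ref{prop:rescaledrate} are genuinely uniform statements along the chosen $\lambda_n$ sequences rather than merely asymptotic for each fixed scaling. Everything else is bookkeeping: the heavy lifting — the concentration function blow-up estimate under rescaling, and the domination of the entropy term — has already been done in Proposition~\ref{prop:rescaledrate} and Lemma~\ref{lem:l2domresc}, and the reconciliation with statistically relevant norms for the white noise model is inherited verbatim from the Gaussian case via \cite[Theorem 8.31]{GV17}, exactly as in the proof of Theorem~\ref{thm:wn}.
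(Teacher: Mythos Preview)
Your proposal is correct and matches the paper's approach: the theorem is stated as an immediate consequence of Proposition~\ref{prop:rescaledrate} and Lemma~\ref{lem:l2domresc} combined with Theorem~\ref{thm:wn}, exactly as you describe. The paper does not spell out the uniformity-in-$n$ subtlety you raise, but your extra care there is appropriate and does not change the argument.
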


\begin{remark}\label{rem:rem3}
The results of Theorem \ref{thm:wn3} show again that for $w_0\in B^\beta_q$ with $q\in[1,2)$, $\beta>\frac1p\vee\frac1q$, the best rates are achieved for $p=q$, while Gaussian priors appear to have the worst performance and to be limited by the linear minimax rate:
\begin{enumerate}
\item[i)] We can achieve the minimax rate $m_n$, \mods{see \eqref{eq:nlminimax}}, only with a $p$-exponential prior for $p=q$, for regularity $\alpha=\beta-\frac{1}p$ and for appropriate vanishing rescaling $\lambda_n$. For $p<q$ we can achieve the minimax rate $m_n$ up to logarithmic factors, again for regularity $\alpha=\beta-\frac{1}p$ and an appropriate vanishing rescaling. 
\item[ii)]For $q<p<2$, the best rate with rescaling is obtained for $\alpha=\beta-\frac{1}q$ and is improved compared to the best obtained rate without rescaling, but it remains polynomially slower than the minimax rate $m_n$, \mods{defined in \eqref{eq:nlminimax}}. The fact that the rate improves, is implicit in the proof of Proposition \ref{prop:rescaledrate}: it is shown that for any $\alpha\neq\beta-\frac{1}q$ the obtained rate with optimized rescaling is strictly slower than for $\alpha=\beta-\frac{1}q,$ while as discussed in Remark \ref{rem:l2ratesqle2}(ii), without rescaling the best rate is achieved for $\alpha>\beta-\frac{\gamma}2=\beta-\frac1q+\frac12$, in particular for $\alpha\neq \beta-\frac1q$.
\item[iii)] For the Gaussian case, $p=2$, rescaling does not improve the best obtainable rate, which is the linear minimax rate $l_n$ \mods{defined in \eqref{eq:lminimax}}. As can be seen at the end of the proof of Proposition \ref{prop:rescaledrate}, this rate is achieved for any $\alpha\geq\beta-\frac{1}q$ for appropriate rescaling (or for no rescaling, when $\alpha=\beta-\frac{\gamma}2$, in agreement with Remark \ref{rem:l2ratesqle2}(ii)). {We reiterate, that at this stage it is not clear whether Gaussian priors are fundamentally limited by the linear minimax rate,} {see the discussion in Section H of the supplementary material below.}
\item[iv)] The case $w_0\in B^1_1$, which is particularly interesting in applications like signal processing, is not immediately covered by Theorem \ref{thm:wn3}. However, an inspection of the proof of Proposition \ref{prop:rescaledrate} shows that using a rescaled $\alpha$-regular $p$-exponential prior, with $p=1$ and for $\alpha>0$ arbitrarily small, we can achieve a rate of contraction arbitrarily close to the minimax rate \mods{$m_n$ in \eqref{eq:nlminimax}}.
\end{enumerate}
\end{remark}

\mods{\begin{remark}
The product Laplace prior ($p=1$ and $\gamma_\ell=1, \,\forall \ell\in\N$), has been studied in the sparse Gaussian sequence model in \cite[Section 3]{CSV15}. Even though the posterior mode corresponds to the LASSO, which is known to provide minimax optimal estimation in this setting, \cite[Theorem 7]{CSV15} shows that the whole posterior contracts at a suboptimal rate for truly sparse signals. This is because the posterior variance is overly large. On the other hand, Theorem \ref{thm:wn3} shows that in the white noise model, under sparsity assumptions expressed in terms of Besov regularity with integrability parameter $q<2$, the appropriately tuned rescaled $\alpha$-regular Laplace prior results in a posterior which contracts optimally for $q=1$ or optimally up to logarithmic terms for $q\neq 1$.  
\end{remark}}

\subsection{Estimates relating to the concentration function}\label{sec:conc}

\subsubsection{$\alpha$-regular $p$-exponential priors in $\ell_2$}
We first study  the centered small ball probability term in the concentration function.  The result is a direct consequence of \cite[Theorem 4.2]{FA07}.
\begin{lemma}\label{lem:l2sb}
Assume $\pe$ is a $d$-dimensional $\alpha$-regular $p$-exponential measure in $\ell_2$. Then as $\epsilon\to0$
\[-\log\pe(\epsilon B_{\ell_2})\asymp\epsilon^{-\frac{d}\alpha}.\]
\end{lemma}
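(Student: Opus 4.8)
The plan is to obtain the claim directly from the small--deviation result \cite[Theorem 4.2]{FA07}, which controls the logarithmic order of the lower tail of a weighted $\ell_p$--norm of an i.i.d.\ sequence. First I would rewrite the small ball probability in a form to which that theorem applies: recalling that $\pe$ is the law of $(\gamma_\ell\xi_\ell)_{\ell\in\N}$,
\[
\pe(\epsilon B_{\ell_2})=\rp\Big(\sum_{\ell=1}^\infty\gamma_\ell^2\xi_\ell^2\le\epsilon^2\Big)=\rp\big(\|(\gamma_\ell\xi_\ell)_{\ell\in\N}\|_{\ell_2}\le\epsilon\big),
\]
so that the quantity of interest is exactly the lower tail of the $\ell_2$--norm of the scaled i.i.d.\ sequence $(\gamma_\ell\xi_\ell)$.

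The second step is to verify the hypotheses of \cite[Theorem 4.2]{FA07} in the case of the $\ell_2$--norm. The random variables $\xi_\ell$ are i.i.d.\ with density $f_p$ continuous and strictly positive at the origin; hence $\rp(|\xi_\ell|\le t)=2f_p(0)\,t+o(t)$ as $t\to0$, i.e.\ the common coordinate law is regularly varying at $0$ with index $1$, and all moments of $\xi_\ell$ are finite. The scaling sequence is polynomial, $\gamma_\ell=\ell^{-r}$ with $r=\tfrac12+\tfrac\alpha d>\tfrac12$; in particular $\gamma\in\ell_2$ (ensuring the $\ell_2$--norm above is almost surely finite, as already used when $\alpha$--regular priors were introduced) and the exponent $r-\tfrac12=\alpha/d$ is strictly positive. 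Feeding these data into \cite[Theorem 4.2]{FA07} yields
\[
-\log\pe(\epsilon B_{\ell_2})\asymp\epsilon^{-\frac{1}{\,r-1/2\,}}=\epsilon^{-d/\alpha},\qquad\epsilon\to0,
\]
which is the assertion. Since the only property of $\xi_\ell$ that enters is the behaviour of $f_p$ near $0$ and $f_p(0)>0$ for every $p\in[1,2]$, the argument is uniform in $p$, the implicit constants depending on $p$ only through $f_p(0)$.

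If one preferred a self--contained argument, I would prove the two bounds separately: the upper bound on $\pe(\epsilon B_{\ell_2})$ (the lower bound on $-\log$) via a Chernoff/Laplace--transform estimate $\pe(\epsilon B_{\ell_2})\le e^{t\epsilon^2}\prod_\ell\E e^{-t\gamma_\ell^2\xi_\ell^2}$ optimised over $t>0$; and the lower bound by splitting at a level $N\asymp\epsilon^{-d/\alpha}$, bounding the tail $\sum_{\ell>N}\gamma_\ell^2\xi_\ell^2$ by Markov's inequality (using $\sum_{\ell>N}\gamma_\ell^2\asymp N^{-2\alpha/d}$ and $\E\xi_\ell^2<\infty$) and the head by $\prod_{\ell\le N}\rp\big(|\xi_\ell|\le\epsilon/(\sqrt{2N}\gamma_\ell)\big)$ together with $\rp(|\xi_\ell|\le s)\ge c_0 s$ for small $s$. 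The only delicate point on this route is the lower bound on the head: after using Stirling for $\sum_{\ell\le N}\log\ell$, the two leading contributions to $-\log(\mathrm{head})$ --- one of size $+N\log(1/\epsilon)$ coming from the term $-N\log\epsilon$ and one of size $-N\log(1/\epsilon)$ coming from $(\tfrac12-r)N\log N$ --- cancel exactly, so one must keep the next--order $O(N)=O(\epsilon^{-d/\alpha})$ remainder rather than stop at the leading term. Invoking \cite[Theorem 4.2]{FA07} sidesteps this bookkeeping entirely, which is why I would use it.
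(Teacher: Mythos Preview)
Your proposal is correct and follows exactly the same route as the paper: the paper's proof consists of the single sentence that the result is a direct consequence of \cite[Theorem 4.2]{FA07}, and you have simply supplied the verification of the hypotheses of that theorem together with an optional self-contained alternative. There is nothing to add.
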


In the next lemma we compute upper bounds on the first term in the concentration function $\varphi_w$, depending on the Besov regularity of a $w\in\ell_2$. 

\begin{lemma}\label{lem:infl2}
Assume that $\pe$ is a $d$-dimensional $\alpha$-regular $p$-exponential measure in $\ell_2$ and that $w_0\in B^\beta_q$ for $\beta>0\vee(\frac{d}q-\frac{d}2)$, $q\geq1$.
Then as $\epsilon\to0,$ we have the following bounds where all the constants depend on $w_0$ only through its $B^\beta_q$-norm: 
\begin{enumerate}
\item[i)]For $q\leq p$ (hence $q\leq2$)
\[\inf_{h\in \oma: \norm{h-w_0}_{\ell_2}\leq \epsilon}\norm{h}^p_{\oma} \lesssim 
	\begin{cases}	
		1,& {\rm if }\; \beta \geq \alpha + \frac dq, \; \\
		\epsilon^{2p\frac{(\beta- \alpha)q-d}{(2\beta+d)q-2d}}, & {\rm if }\; \beta < \alpha + \frac dq. \; \\		
\end{cases}\]
\item[ii)]For $q>p$
\[\inf_{h\in \oma: \norm{h-w_0}_{\ell_2}\leq \epsilon}\norm{h}^p_{\oma} \lesssim 
	\begin{cases}	
		1, & {\rm if }\; \beta > \alpha + \frac dp, \; \\
		(-\log \epsilon)^{\frac{q-p}q}, & {\rm if }\; \beta = \alpha + \frac dp, \;  \\
		\epsilon^{\frac{\beta p-\alpha p-d}{\beta}},  & {\rm if }\; \beta < \alpha + \frac dp \,\;{\rm and}\,\;q\geq2, \; \\
		\epsilon^{2q\frac{(\beta-\alpha) p-d}{(2\beta+d)q-2d}},  & {\rm if }\; \beta < \alpha + \frac dp \;\,{\rm and}\,\;q<2. 
\end{cases}\]
\end{enumerate}
\end{lemma}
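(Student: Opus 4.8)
<br>

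The goal is to bound the infimum $\inf_{h\in\oma:\norm{h-w_0}_{\ell_2}\le\epsilon}\norm{h}_{\oma}^p$ where $\oma = B^{\alpha+d/p}_p$ and $w_0\in B^\beta_q$. The natural strategy is to choose $h$ to be a \emph{projection/truncation} of $w_0$: set $h^{(N)}_\ell = w_{0,\ell}$ for $\ell\le N$ and $0$ otherwise, so that $\norm{h^{(N)}-w_0}_{\ell_2}^2 = \sum_{\ell>N} w_{0,\ell}^2$, which can be controlled by a tail estimate coming from $w_0\in B^\beta_q$. Then one estimates $\norm{h^{(N)}}_{\oma}^p = \sum_{\ell\le N}\ell^{\alpha p/d\,\cdot\,(\text{weight})}|w_{0,\ell}|^p$ — more precisely with the weight $\ell^{p(\frac{\alpha}{d}+\frac{1}{p}+\frac12)-1}$ from Definition \ref{defn:besov} — again using $w_0\in B^\beta_q$. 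Optimizing over $N$ to balance the truncation error against the $\oma$-norm growth gives the stated rates.

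First I would record the two building-block tail/head estimates. For the tail: if $\sum_\ell \ell^{q(\frac\beta d+\frac12)-1}|w_{0,\ell}|^q<\infty$, then by monotonicity of the weights (or H\"older when $q>2$, plain bounding when $q\le2$), $\sum_{\ell>N}w_{0,\ell}^2 \lesssim N^{-2\beta/d}$ when $q\le 2$ and a corresponding power otherwise; care is needed because for $q<2$ the passage from the $\ell_q$ weighted norm to the $\ell_2$ tail uses that $\ell^{q(\frac\beta d+\frac12)-1}$ dominates the weight needed, giving a clean $N^{-2\beta/d}$-type bound only under $\beta>d/q-d/2$ — which is exactly the standing assumption. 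For the head: $\sum_{\ell\le N}\ell^{p(\frac\alpha d+\frac1p+\frac12)-1}|w_{0,\ell}|^p$ is split according to whether $q\le p$ or $q>p$. When $q\le p$, apply H\"older in $\ell$ with exponents $q/p$ and its conjugate to pull out the $B^\beta_q$-norm of $w_0$, leaving a sum of powers of $\ell$ over $\ell\le N$ whose growth in $N$ is a single power (or bounded). When $q>p$, the roles reverse: one bounds $|w_{0,\ell}|^p\le (\text{something})$, and the residual factor $\sum_{\ell\le N}\ell^{\text{(power)}}$ can be \emph{summable} (bounded, the case $\beta>\alpha+d/p$), \emph{logarithmic} (the borderline $\beta=\alpha+d/p$, giving the $(-\log\epsilon)^{(q-p)/q}$ term), or a genuine power. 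The four/three cases in the statement correspond precisely to the three regimes of this exponent together with the subdivision $q\ge2$ versus $q<2$ in the power regime, which affects how the optimal $N$ relates to $\epsilon$ through the tail bound.

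After these, the final step is the optimization: for a given $\epsilon$ pick $N=N(\epsilon)$ so that the truncation error $\sum_{\ell>N}w_{0,\ell}^2 \asymp \epsilon^2$, i.e.\ $N\asymp \epsilon^{-d/\beta}$ in the $q\le2$ regime (and the analogous choice otherwise), substitute into the head bound, and read off the exponent of $\epsilon$. One checks that in the bounded cases the head stays $O(1)$ for all large $N$, so the infimum is $\lesssim 1$ uniformly; in the log case the head is $\asymp(\log N)^{(q-p)/q}\asymp(-\log\epsilon)^{(q-p)/q}$; and in the power cases the exponents simplify to the displayed fractions (e.g.\ for $q\le p$ one gets the exponent $2p\frac{(\beta-\alpha)q-d}{(2\beta+d)q-2d}$ after inserting $N\asymp\epsilon^{-d/\beta}$ — wait, more carefully, the denominator $(2\beta+d)q-2d$ signals that the effective tail exponent for $q<2$ is $N^{-(2\beta q+d(q-2))/(dq)}$ rather than $N^{-2\beta/d}$, so one must use that sharper $q<2$ tail estimate, not the crude one). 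Throughout, all constants depend on $w_0$ only through $\norm{w_0}_{B^\beta_q}$, as claimed, since $w_0$ enters only via that norm in every application of H\"older.

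The main obstacle I anticipate is \textbf{getting the tail estimate $\sum_{\ell>N}w_{0,\ell}^2$ sharp when $q<2$}: the naive bound (dropping weights) loses too much and would not produce the denominators $(2\beta+d)q-2d$ appearing in cases (i) and (ii) last line. One needs the refined inequality obtained by H\"older with exponents $(2/q, 2/(2-q))$ applied to $\sum_{\ell>N}w_{0,\ell}^2 = \sum_{\ell>N}(\ell^{(\frac\beta d+\frac12)-1/q}|w_{0,\ell}|)^q\cdot \ell^{-(\frac\beta d+\frac12)q+1+\cdots}$, tracking exponents precisely so that the leftover power-of-$\ell$ sum over $\ell>N$ converges (this is where $\beta>d/q-d/2$ is used) and yields $N^{-(2\beta q+d(q-2))/(dq)}$. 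Matching this with the head growth and simplifying the resulting rational exponents into the forms stated is the delicate bookkeeping; the case split $q\ge2$ vs.\ $q<2$ in (ii) is exactly the point where the $q\ge2$ tail ($N^{-2\beta/d}$) and the $q<2$ tail ($N^{-(2\beta q+d(q-2))/(dq)}$) diverge, so both must be carried.
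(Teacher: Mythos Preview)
Your strategy is exactly the paper's: truncate $w_0$ at level $L$, bound the $\ell_2$-tail $\sum_{\ell>L}w_{0,\ell}^2$ using $w_0\in B^\beta_q$, bound the head $\sum_{\ell\le L}\ell^{p(\alpha/d+1/2)}|w_{0,\ell}|^p$, and plug in the minimal $L$ making the tail $\le\epsilon^2$. You also correctly identify the key delicate point, namely the sharper tail exponent $L^{2/q-1-2\beta/d}$ when $q<2$ and its role in producing the denominators $(2\beta+d)q-2d$.

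One small technical mix-up: the pairing of H\"older versus pointwise bounds is reversed from what actually works. When $q\le p$, ``H\"older with exponents $q/p$'' is not available since $q/p\le 1$; instead the paper writes $|w_{0,\ell}|^p=|w_{0,\ell}|^q\cdot|w_{0,\ell}|^{p-q}$ and uses the \emph{pointwise} bound $|w_{0,\ell}|\le\norm{w_0}_{B^\beta_q}\ell^{-\beta/d-1/2+1/q}$ on the extra factor. H\"older with exponents $(q/p,\,q/(q-p))$ is used in the other case $q>p$. Likewise for the $q<2$ tail: your proposed H\"older $(2/q,2/(2-q))$ goes the wrong way (it would bound an $\ell_q$-sum by an $\ell_2$-sum, not the reverse); the paper again splits $|w_{0,\ell}|^2=|w_{0,\ell}|^q\cdot|w_{0,\ell}|^{2-q}$ and applies the pointwise bound. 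These corrections do not change the exponents you wrote down, only the mechanism for obtaining them.
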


We finally verify that for $\epsilon_n$ the rate in Proposition \ref{prop:ubl2}, the quality of approximation of $\sh=\sha$ by $\om=\oma$ in $\ell_2$, is sufficiently good for  the maximum appearing in the right hand side of the complexity bound \eqref{eq:adh22} in our general contraction Theorem \ref{t:main}, to be dominated by $n\epsilon_n^2$. 
\begin{lemma}\label{lem:l2dom}
Let $\pe$ be a $d$-dimensional $\alpha$-regular $p$-exponential measure in $\ell_2$. 
For $\epsilon, a>0$, define
\begin{equation}
\label{eq:def_f_and_g}
f(a)=a^p(1\vee a^{\frac{2d-pd}{d+2\alpha}})\,\;\text{and}\;\; g(\epsilon) = 2(1\vee\epsilon^{-\frac{2d}{d+2\alpha}}).
\end{equation}
Then $f$ and $g$ satisfy the approximation bound \eqref{eq:inf} in Section \ref{sec:main} and, moreover, 
\begin{equation}
	\label{eq:g_ineq}
	f(n^\frac12\epsilon_n)g(\epsilon_n)^{1-\frac{p}2}\lesssim n \epsilon_n^2 
\end{equation}
for all $p\in[1,2]$, where $\epsilon_n=r_n^{\alpha,\beta,p,q}$ for $r_n^{\alpha, \beta, p ,q}$ as in Proposition \ref{prop:ubl2}. 
\end{lemma}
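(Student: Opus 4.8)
The plan is to verify the two claims separately. First I would establish the approximation bound \eqref{eq:inf}: given $h\in aB_{\sha}$, that is $h\in B^{\alpha+d/2}_2$ with $\norm{h}_{B^{\alpha+d/2}_2}\leq a$, I need to produce $x\in\oma=B^{\alpha+d/p}_p$ with $\norm{x-h}_{\ell_2}\leq\epsilon$ and control $\norm{x}^p_{\oma}$. The natural candidate is a truncation $x=P_N h$ (keeping the first $N$ coordinates) for an appropriately chosen $N=N(a,\epsilon)$. The tail $\norm{h-P_N h}_{\ell_2}^2=\sum_{\ell>N}h_\ell^2$ is bounded using the $B^{\alpha+d/2}_2$-membership of $h$ by $a^2 N^{-2\alpha/d}$ (shifting the polynomial weight), so $\norm{x-h}_{\ell_2}\leq\epsilon$ is guaranteed once $N\asymp (a/\epsilon)^{d/\alpha}$. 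For the $\oma$-norm of the truncation, I would bound $\norm{P_N h}^p_{\oma}=\sum_{\ell\leq N}\ell^{p(\alpha/d+1/p)-1}|h_\ell|^p$ by applying H\"older's inequality with exponents $2/p$ and $2/(2-p)$ to split off the factor $|h_\ell|^p$, which converts the sum into $\norm{h}^p_{B^{\alpha+d/2}_2}$ times a sum of powers of $\ell$ over $\ell\leq N$; the latter sum is (up to constants) $N^{d(2-p)/(2d)\cdot\text{(something)}}$, and substituting $N\asymp(a/\epsilon)^{d/\alpha}$ produces exactly a bound of the form $f(a)g(\epsilon)^{1-p/2}$ with $f,g$ as in \eqref{eq:def_f_and_g}. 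The $1\vee(\cdot)$ truncations in the definitions of $f$ and $g$ handle the regime $N\leq 1$ (i.e. $a\leq\epsilon$), where one simply takes $x=0$.

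Second, I would verify the inequality \eqref{eq:g_ineq}. Here I substitute $a=n^{1/2}\epsilon_n$ into $f$, evaluate $g(\epsilon_n)^{1-p/2}$, and reduce the claim to checking that
\[
(n^{1/2}\epsilon_n)^p\big(1\vee(n^{1/2}\epsilon_n)^{\frac{2d-pd}{d+2\alpha}}\big)\big(1\vee\epsilon_n^{-\frac{2d}{d+2\alpha}}\big)^{1-\frac p2}\lesssim n\epsilon_n^2.
\]
Since $\epsilon_n=r_n^{\alpha,\beta,p,q}\to0$ and $n\epsilon_n^2\to\infty$ (as $n\epsilon_n^2\gtrsim 1$ and indeed grows), the relevant branches of the maxima are the nontrivial ones, so after taking logarithms this becomes a linear inequality in $\log n$ with coefficients that are explicit rational functions of $\alpha,\beta,p,q$. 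The strategy is to go case by case through the three cases (and their sub-cases) of Proposition \ref{prop:ubl2}: in each case $\epsilon_n$ is an explicit power $n^{-c}$, so the inequality reduces to a numerical inequality between two exponents, which can be rearranged and checked using the constraints $p\in[1,2]$, $\alpha>0$, and the case hypotheses (e.g. $\alpha\geq\beta$ versus $\alpha<\beta$, or the threshold conditions involving the quantity $a$ in cases ii) and iii)). In the ``oversmoothing'' branches where $\epsilon_n=n^{-\alpha/(d+2\alpha)}$, the verification should be uniform and clean; the ``undersmoothing''/matched branches require using the specific exponent coming from the infimum-term estimate in Lemma \ref{lem:infl2}.

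The main obstacle I anticipate is the bookkeeping in the second part: there are several cases, each exponent is a messy rational expression, and one must confirm the inequality holds in every sub-case including the boundary thresholds, while also correctly tracking which branch of each $1\vee(\cdot)$ is active (in particular, whether $n^{1/2}\epsilon_n\to\infty$ — which it does, since $\epsilon_n$ decays slower than $n^{-1/2}$ in all cases). A secondary technical point is making the H\"older split in the first part produce exactly the exponent $\frac{2d-pd}{d+2\alpha}$ appearing in $f$; this requires care in matching the power of $N$ from the sum $\sum_{\ell\leq N}\ell^{\text{(exponent)}}$ against the substitution $N\asymp(a/\epsilon)^{d/\alpha}$, and in separating cleanly the $a$-dependence (which must grow only polynomially, as required in \eqref{eq:inf}) from the $\epsilon$-dependence.
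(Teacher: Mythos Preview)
Your overall strategy for both parts is the same as the paper's, but there is a concrete computational slip in Part~1 and a much shorter route available in Part~2.

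\textbf{Part 1.} Your truncation level is off. Since $\gamma_\ell=\ell^{-\frac12-\frac\alpha d}$, the weight in $\sha$ is $\gamma_\ell^{-2}=\ell^{1+2\alpha/d}$, so
\[
\sum_{\ell>N}h_\ell^2=\sum_{\ell>N}\ell^{-(1+2\alpha/d)}\,\ell^{1+2\alpha/d}h_\ell^2\leq N^{-(1+2\alpha/d)}\norm{h}_{\sha}^2\leq a^2 N^{-(d+2\alpha)/d},
\]
not $a^2N^{-2\alpha/d}$. The correct truncation is therefore $N\asymp(a/\epsilon)^{2d/(d+2\alpha)}$, not $(a/\epsilon)^{d/\alpha}$. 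With your (larger) $N$, the H\"older step gives $N^{1-p/2}a^p\asymp a^{p+d(2-p)/(2\alpha)}\epsilon^{-d(2-p)/(2\alpha)}$, whose exponents do \emph{not} match the $f,g$ in \eqref{eq:def_f_and_g}. With the correct $N$ one gets exactly
\[
N^{1-p/2}a^p\lesssim\bigl(a^{2d/(d+2\alpha)}\epsilon^{-2d/(d+2\alpha)}\vee1\bigr)^{1-p/2}a^p\leq f(a)g(\epsilon)^{1-p/2},
\]
as required. This is precisely the ``secondary technical point'' you flagged; once the exponent on $N$ is fixed, the rest of your argument goes through as in the paper.

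\textbf{Part 2.} Your case-by-case verification through all branches of Proposition~\ref{prop:ubl2} would work, but the paper avoids it entirely. After simplifying (using $n\epsilon_n^2>1$ and $\epsilon_n<1$ to pick the active branches of the maxima), the product $f(n^{1/2}\epsilon_n)g(\epsilon_n)^{1-p/2}$ collapses to $2^{1-p/2}n^{(d+p\alpha)/(d+2\alpha)}\epsilon_n^{p}$, so \eqref{eq:g_ineq} is equivalent to the single inequality
\[
\epsilon_n\gtrsim n^{-\alpha/(d+2\alpha)}.
\]
But this holds automatically in \emph{every} case of Proposition~\ref{prop:ubl2}, because the bound on the concentration function used there always contains the centered small ball term $\epsilon^{-d/\alpha}$ from Lemma~\ref{lem:l2sb}: solving $\epsilon_n^{-d/\alpha}\leq n\epsilon_n^2$ gives exactly $\epsilon_n\geq n^{-\alpha/(d+2\alpha)}$. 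This one-line observation replaces all the separate exponent checks you were anticipating as the ``main obstacle''.
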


\subsubsection{Rescaled $\alpha$-regular $p$-exponential priors in $\ell_2$}
The expression \eqref{eq:rescf} for the concentration function $\bar\varphi_w(\cdot)$ of rescaled $\alpha$-regular $p$-exponential priors, suggests that we can re-use Lemmas \ref{lem:l2sb} and \ref{lem:infl2} in order to prove Proposition~\ref{prop:rescaledrate}.  
In the next lemma we also verify that for $\epsilon_n$ the rate in Proposition \ref{prop:rescaledrate}, the quality of approximation of $\sh=\shl$ by $\om=\oml$ in $\ell_2$, is sufficiently good for  the maximum appearing in the right hand side of the complexity bound \eqref{eq:adh22} in our general contraction Theorem \ref{t:main}, to be dominated by $n\epsilon_n^2$. 
\begin{lemma}\label{lem:l2domresc}
Let $\pel$ be a rescaled $d$-dimensional $\alpha$-regular $p$-exponential measure in $\ell_2$. 
For $t>0$, define $\f(t):=\lambda^{-p}f(\lambda t)$ and $\g(t):=g(t)$, where $f, g$ are as in Lemma~\ref{lem:l2dom}.
Then $\f$ and $\g$ satisfy the approximation bound \eqref{eq:inf} in Section \ref{sec:main}, and, moreover, 
\begin{equation}
	\label{eq:g_ineql}
	\f(n^\frac12\epsilon_n)\g(\epsilon_n)^{1-\frac{p}2}\lesssim n \epsilon_n^2 
\end{equation}
for all $p\in[1,2]$, where $\epsilon_n=\bar{r}_n$ and $\lambda=\lambda_n$ as in Proposition \ref{prop:rescaledrate}. 
\end{lemma}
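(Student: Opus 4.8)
The plan is to reduce everything to the already-proven Lemma \ref{lem:l2dom} by tracking how the rescaling $\gammal_\ell=\lambda\gamma_\ell$ affects the spaces $\oml,\shl$ and their norms. Recall from Subsection \ref{ssec:resc} that $\oml=\oma$ and $\shl=\sha$ as sets, with $\norm{\cdot}_{\oml}=\lambda^{-1}\norm{\cdot}_{\oma}$ and $\norm{\cdot}_{\shl}=\lambda^{-1}\norm{\cdot}_{\sha}$. Consequently $aB_{\shl}=(\lambda a)B_{\sha}$, and for $h\in aB_{\shl}$ the infimum defining the approximation bound \eqref{eq:inf} satisfies
\[
\inf_{x\in\oml:\norm{x-h}_{\ell_2}\leq\epsilon}\norm{x}_{\oml}^p=\lambda^{-p}\inf_{x\in\oma:\norm{x-h}_{\ell_2}\leq\epsilon}\norm{x}_{\oma}^p\leq\lambda^{-p}f(\lambda a)\,g(\epsilon)^{1-\frac p2},
\]
where the inequality is exactly Lemma \ref{lem:l2dom} applied with the $\sha$-radius $\lambda a$. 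This shows $\f(t)=\lambda^{-p}f(\lambda t)$ and $\g(t)=g(t)$ verify \eqref{eq:inf} for $\pel$; monotonicity of $\f$ and at-most-polynomial growth are inherited from $f$ (since $\lambda$ is a fixed constant for each $n$). So the first assertion is essentially bookkeeping.

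The substantive part is the inequality \eqref{eq:g_ineql}: we must check $\f(n^{1/2}\epsilon_n)\g(\epsilon_n)^{1-p/2}\lesssim n\epsilon_n^2$ with $\epsilon_n=\bar r_n$ and $\lambda=\lambda_n$ as in Proposition \ref{prop:rescaledrate}. Writing this out,
\[
\f(n^{1/2}\epsilon_n)\g(\epsilon_n)^{1-\frac p2}=\lambda_n^{-p}f(\lambda_n n^{1/2}\epsilon_n)\,g(\epsilon_n)^{1-\frac p2},
\]
and with $f(a)=a^p(1\vee a^{\frac{2d-pd}{d+2\alpha}})$, $g(\epsilon)=2(1\vee\epsilon^{-\frac{2d}{d+2\alpha}})$ this becomes, up to constants,
\[
\bigl(n^{1/2}\epsilon_n\bigr)^p\bigl(1\vee(\lambda_n n^{1/2}\epsilon_n)^{\frac{2d-pd}{d+2\alpha}}\bigr)\bigl(1\vee\epsilon_n^{-\frac{2d}{d+2\alpha}}\bigr)^{1-\frac p2}.
\]
Note the leading factor $\lambda_n^{-p}\cdot\lambda_n^p=1$ cancels the explicit $\lambda$, so $\lambda_n$ survives only inside the $1\vee(\cdot)$ max. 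The plan is then to go case by case through the four scenarios of Proposition \ref{prop:rescaledrate}: (i) $q=p$, $\alpha=\beta-\frac dp$, $\bar r_n=m_n$, $\lambda_n=n^{-\frac{d}{p(d+2\beta)}}$; (ii) $q>p$, with the logarithmic corrections to both $\bar r_n$ and $\lambda_n$; (iii) $q<p$, with $\alpha=\beta-\frac dq$ and the stated $\lambda_n$; and the remaining ``all other combinations'' case where $\bar r_n$ is polynomially slower than $m_n$ but still the optimal rate for the given $\alpha,\beta,p,q$ — one substitutes the explicit exponents and verifies the inequality by elementary exponent arithmetic, keeping track of which branch of each $1\vee(\cdot)$ is active (for $p=2$ the second factor is trivially $1$ and for $p=1$ the last factor has exponent $1/2$, so the extreme cases are the ones to watch). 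In case (ii) one must also confirm the logarithmic powers combine correctly, i.e.\ the log power on the left does not exceed $2$ times the log power in $\bar r_n$.

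The main obstacle is purely organizational: there are several cases inside Proposition \ref{prop:rescaledrate}, and within each the two maxima in $f$ and $g$ each split into sub-cases depending on whether $\lambda_n n^{1/2}\epsilon_n\gtrless 1$ and $\epsilon_n\gtrless 1$ (the latter is automatic for large $n$ since $\epsilon_n\to0$, which already kills the ``$1\vee$'' in $g$ to the nontrivial branch, simplifying matters). I expect that after substituting the explicit formulas for $\bar r_n$ and $\lambda_n$, each case reduces to verifying that a certain affine combination of the exponents $\beta,\alpha,p,q,d$ has the correct sign; since the rates in Proposition \ref{prop:rescaledrate} were derived precisely to balance the two terms of the concentration function \eqref{eq:rescf}, these sign conditions will hold, often with equality in the optimal-$\alpha$ cases. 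The only genuinely delicate point is making sure the cancellation $\lambda_n^{-p}f(\lambda_n t)=t^p\bigl(1\vee(\lambda_n t)^{\frac{d(2-p)}{d+2\alpha}}\bigr)$ is exploited correctly — in particular that the dangerous second factor stays bounded by $n\epsilon_n^2\cdot(n^{1/2}\epsilon_n)^{-p}$ for the chosen $\lambda_n$ — and this is exactly where the specific vanishing rate of $\lambda_n$ enters.
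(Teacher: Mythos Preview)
Your first part---verifying that $\f,\g$ satisfy \eqref{eq:inf} by reducing to Lemma~\ref{lem:l2dom} via the identifications $aB_{\shl}=(\lambda a)B_{\sha}$ and $\norm{\cdot}_{\oml}^p=\lambda^{-p}\norm{\cdot}_{\oma}^p$---is exactly what the paper does, so this step is fine.

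For the second part you propose a case-by-case verification across the scenarios of Proposition~\ref{prop:rescaledrate}, substituting the explicit exponents for $\bar r_n$ and $\lambda_n$ and checking sign conditions. The paper avoids all of this with a unified structural argument that you are missing. After writing out $\f(n^{1/2}\epsilon_n)\g(\epsilon_n)^{1-p/2}$ explicitly (taking the nontrivial branches, using $n\epsilon_n^2>1$, $p\le2$, and $\epsilon_n<1$), the paper shows that for $p<2$ the inequality \eqref{eq:g_ineql} is \emph{equivalent} to the single condition
\[
\epsilon_n\;\gtrsim\; n^{-\frac{\alpha}{d+2\alpha}}\,\lambda_n^{\frac{d}{d+2\alpha}}.
\]
This condition then follows automatically, with no case distinction: in the proof of Proposition~\ref{prop:rescaledrate}, $\epsilon_n$ was obtained by solving $B(\epsilon_n,\lambda_n)=n\epsilon_n^2$ for an upper bound $B$ on $\varphil_{w_0}$, and by \eqref{eq:rescf} that bound always contains the centered small-ball term $(\epsilon_n/\lambda_n)^{-d/\alpha}$. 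Hence $n\epsilon_n^2\ge (\epsilon_n/\lambda_n)^{-d/\alpha}$, which rearranges to the displayed inequality. This is precisely the trick used for the unrescaled measure in Lemma~\ref{lem:l2dom} (cf.\ \eqref{eq:optbnd}), transported to the rescaled setting. Your exponent-by-exponent plan would get there eventually, including in the logarithmic case~(ii), but the structural observation turns the whole verification into two lines and makes clear \emph{why} the inequality holds: it is built into the way $\epsilon_n$ was constructed from the concentration function, not an accident of the particular formulas for $\bar r_n$ and $\lambda_n$.
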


\section{The $C[0,1]$ setting}\label{sec:l8}
In this section we consider $p$-exponential measures in the separable Banach space $X=C[0,1].$ We define $p$-exponential measures using an appropriately regular  Schauder basis, \mods{see below for details}. In particular, we define $\alpha$-regular $p$-exponential measures in $C[0,1]$ and study their concentration at centers of varying H\"older-type regularity. We combine our findings with Theorem \ref{thm:de}, to obtain posterior contraction rates for density estimation under H\"older-type regularity of the truth.

We consider orthonormal wavelet bases of $L_2[0,1]$, constructed as discussed in \cite{CDV93}; see \cite{GN16} or \cite{Meyer92} for fundamentals of wavelet analysis. We denote such a wavelet basis by $\{\psi_{kl}: k\in\N_0, l=1,\dots, 2^k\}$, where $k$ corresponds to the resolution level and $l$ to the location. A function $\uf\in L_2[0,1]$ can be expanded as $\sum_{k=0}^\infty\sum_{l=1}^{2^k}\uf_{kl}\psi_{kl}$, where the coefficients $\uf_{kl}$ are given by the $L_2$-inner products between $\uf$ and $\psi_{kl}$. We assume that $\psi_{kl}$ are $S$-H\"older continuous for some $S>0$. We record some properties that will be useful for our analysis, see \cite{CDV93, GN16}:
\begin{itemize}
\item $\{\psi_{kl}\}$ is a Schauder basis of $C[0,1]$. 
\item There exists a constant $C_1>0$ such that 
\begin{equation}\label{eq:hold}
|\psi_{kl}(x)-\psi_{kl}(y)|\leq C_1 2^{\frac{k}2+k\vartheta}|x-y|^\vartheta, \;\vartheta\leq S\wedge1.
\end{equation}
\item There exists a constant $C_2>0$ such that
\begin{equation}\label{eq:supnorm}
\Bigg\|\sum_{l=1}^{2^k}\uf_{kl} \psi_{kl}\Bigg\|_{L_\infty}\leq  C_2\,2^{\frac{k}2}\sup_{1\leq l\leq 2^k}|\uf_{kl}|.
\end{equation}
\item Let $0<s<S$. Then $\uf$ belongs to the Besov space $B^s_{\infty\infty}[0,1]$ if and only if  
$$
\norm{\uf}_{B^s_{\infty\infty}}:=\sup_{k\ge 0;\, 1\le l\le 2^k}2^{k(\frac{1}{2}+s)}|\uf_{kl} |<\infty. 
$$
Furthermore, if $s$ is non-integer we have that $g\in C^s$ if and only if \[\norm{\uf}_{B^s_{\infty\infty}}<\infty.\] \end{itemize}
Note, that our analysis holds for other possibly nonorthonormal multiresolution Schauder bases, provided the above bounds on $\psi_{kl}$ and the characterizations in terms of the coefficients $\uf_{kl}$ hold. For example, one can use {the Faber (integrated Haar) basis, see \cite[Section 3.1.3]{TR10}}. We use basis functions $\psi_{kl}$ which have sufficient H\"older regularity, so that $\psi_{kl}$ can characterize the maximal $(s,\infty,\infty)$-Besov (or $s$-H\"older) regularity we consider, that is we assume $S>\max\{\alpha,\beta\}$, where $\alpha,\beta$ will express the regularity of the prior and truth, respectively.

We can define a $p$-exponential measure $\pe$ in $C[0,1]$ by randomizing the coefficients in the expansion
$$
\uf(t)=\sum_{k=0}^\infty \sum_{l=1}^{2^k}\uf_{kl}\psi_{kl}(t), \quad t\in[0,1].
$$
We let  \begin{equation}\label{eq:randomtent}\uf_{kl}=\gamma_{kl}\xi_{kl}, \quad \xi_{kl}\stackrel{iid}{\sim}f_p,\quad p\in[1,2],  \quad\gamma_{kl}= 2^{-(\frac12+\alpha)k}, \;\alpha>0. \end{equation}   

 The next result studies H\"older continuity of draws from this $p$-exponential measure.

\begin{proposition}\label{prop:hol}
Let $\pe$ be the $p$-exponential measure defined in \eqref{eq:randomtent}, for any $p\in[1,2]$ and $\alpha>0$. Then $\pe(C^s)=1$ for all  $s<\alpha\wedge1$. \footnote{\mods{By assuming additional regularity on the basis functions and using techniques relying on the embeddings of Besov spaces, it is possible to remove the requirement $s<1$ in Proposition \ref{prop:hol}. See \cite[Theorem 7]{DS15} for a similar derivation and \cite[Section 4.6.1]{HT78} for the relevant embeddings.}}
\end{proposition}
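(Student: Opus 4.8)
The plan is to apply the Kolmogorov continuity test to the random function $\uf(t) = \sum_{k=0}^\infty \sum_{l=1}^{2^k} \gamma_{kl}\xi_{kl}\psi_{kl}(t)$ and obtain almost sure $s$-H\"older continuity for every $s < \alpha \wedge 1$. Concretely, I would fix $s < \alpha \wedge 1$, pick $\vartheta$ with $s < \vartheta < \alpha \wedge 1$ (possible since $S > \alpha$ so \eqref{eq:hold} is available for this $\vartheta \le S\wedge 1$), and estimate $\E|\uf(t) - \uf(t')|^m$ for a sufficiently large even integer $m$. First I would split the increment by resolution level, $\uf(t) - \uf(t') = \sum_{k\ge 0} \sum_{l=1}^{2^k}\gamma_{kl}\xi_{kl}(\psi_{kl}(t) - \psi_{kl}(t'))$, and for each $k$ control $\sum_{l=1}^{2^k} \gamma_{kl}\xi_{kl}(\psi_{kl}(t)-\psi_{kl}(t'))$ using \eqref{eq:supnorm} together with the interpolation of the two bounds on $|\psi_{kl}(t)-\psi_{kl}(t')|$: namely $|\psi_{kl}(t)-\psi_{kl}(t')| \le C_1 2^{k/2+k\vartheta}|t-t'|^\vartheta$ and also $|\psi_{kl}(t)-\psi_{kl}(t')| \le 2\|\psi_{kl}\|_{L_\infty} \lesssim 2^{k/2}$, so that for the finest levels one uses the former and for the coarsest the latter, with a cutoff level $k^* \asymp \log_2(1/|t-t'|)$.

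The key estimate is then an $L_m$-bound: using that the $\xi_{kl}$ are i.i.d.\ with all moments finite (the density $f_p(x)\propto e^{-|x|^p/p}$ has moments of every order for $p\in[1,2]$), one gets for each level $k$ that $\bigl\|\sum_{l=1}^{2^k}\gamma_{kl}\xi_{kl}(\psi_{kl}(t)-\psi_{kl}(t'))\bigr\|_{L_m}$ is bounded, via \eqref{eq:supnorm} applied after taking the sup over $l$ inside, by $C_2 2^{k/2}\gamma_{kl}\, \|\sup_l |\xi_{kl}|\|_{L_m}\cdot \max_l |\psi_{kl}(t)-\psi_{kl}(t')|/\|\psi_{kl}\|_{L_\infty}$-type quantities; more carefully, since there are at most a bounded number of indices $l$ at level $k$ for which $\psi_{kl}(t)-\psi_{kl}(t')\ne0$ when $\psi$ has compact support (a standard property of the CDV wavelets), one has $\|\sup_l|\xi_{kl}|\|_{L_m}$ over that bounded set $\lesssim_m 1$. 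Combining, level $k$ contributes $\lesssim \gamma_{kl}2^{k/2}\min\{1,\, 2^{k\vartheta}|t-t'|^\vartheta\} = 2^{-\alpha k}\min\{1, 2^{k\vartheta}|t-t'|^\vartheta\}$, and summing over $k$ (geometric tails on both sides of $k^*$, using $\vartheta < \alpha$ for the high-frequency part) yields $\|\uf(t)-\uf(t')\|_{L_m} \lesssim |t-t'|^{\alpha'}$ for some $\alpha' $ with $s < \alpha' < \alpha\wedge 1$; in fact one can get any exponent strictly below $\alpha\wedge1$. Hence $\E|\uf(t)-\uf(t')|^m \lesssim |t-t'|^{\alpha' m}$, and choosing $m$ large enough that $\alpha' m > 1$ and $\alpha' - 1/m > s$, the Kolmogorov continuity theorem gives a modification of $\uf$ that is a.s.\ $s$-H\"older, i.e.\ $\pe(C^s)=1$. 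Since the series also converges in $C[0,1]$ (as $\{\psi_{kl}\}$ is a Schauder basis and the partial sums are a.s.\ Cauchy in sup-norm by the same level-wise bound with $m$ large), the modification coincides with $\uf$.

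The main obstacle I anticipate is the careful bookkeeping in the level-wise moment bound: one needs the compact-support (or at least rapidly-decaying) structure of the wavelets so that at each resolution level only $O(1)$ terms are active for a given pair $t,t'$, which keeps $\|\sup_l |\xi_{kl}|\|_{L_m}$ bounded independently of $k$ rather than growing like $(\log 2^k)^{1/p}$ — otherwise one would pick up spurious logarithmic factors. An alternative, perhaps cleaner, route that avoids this is to invoke the metric-entropy / Besov-embedding argument referenced in the footnote: show directly that draws lie in $B^s_{\infty\infty}$ for $s<\alpha$ by bounding $\E\sup_{k,l} 2^{k(1/2+s)}|\uf_{kl}|$ using the i.i.d.\ tails of $\xi_{kl}$ and a union bound over the $2^k$ locations at level $k$ (the $\sum_k 2^k e^{-c 2^{(\alpha-s)kp/?}}$ type sum converges since $\alpha>s$), and then use $B^s_{\infty\infty}\hookrightarrow C^s$ for non-integer $s<1$ via the last bullet in the list of wavelet properties; this reproduces the result for $s<\alpha\wedge1$ directly, and is essentially the argument of \cite[Theorem 7]{DS15} and \cite[Corollary 5]{DS15}. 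I would present the Kolmogorov-test version as the primary proof since the statement only claims $C^s$ regularity for $s<\alpha\wedge1$, for which it suffices and is self-contained given \eqref{eq:hold}, \eqref{eq:supnorm}.
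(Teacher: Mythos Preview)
Your overall strategy coincides with the paper's: both prove the claim via the Kolmogorov continuity test, splitting the increment $u(x)-u(y)$ by resolution level and controlling each level with the two-regime bound on $|\psi_{kl}(x)-\psi_{kl}(y)|$ coming from \eqref{eq:hold} and the sup-norm scaling of $\psi_{kl}$. The final step is the same in both.

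The execution differs in one substantive point. You work directly with $L_m$ moments and invoke the compact support of the wavelets to argue that at each level $k$ only $O(1)$ indices $l$ contribute to $u(x)-u(y)$, so that $\|\sup_l|\xi_{kl}|\|_{L_m}$ stays bounded in $k$. The paper instead works with \emph{cumulants}: since cumulants of independent sums are additive, one has
\[
\kappa_{2q}\big(u(x)-u(y)\big)=\sum_{k,l}\kappa_{2q}(\xi_{kl})\,\gamma_{kl}^{2q}\big(\psi_{kl}(x)-\psi_{kl}(y)\big)^{2q},
\]
which reduces the problem to a purely deterministic sum over all $2^k$ indices $l$ (the extra factor $2^k$ this introduces is harmless and is absorbed in the cutoff computation). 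Only at the end does the paper convert the cumulant bound into the moment bound $\E|u(x)-u(y)|^{2q}\lesssim |x-y|^{4\alpha\vartheta q^2/(1+2q\vartheta)}$ needed for Kolmogorov. This sidesteps entirely the need to count active wavelets or to control $\sup_l|\xi_{kl}|$, and uses only \eqref{eq:hold} and the consequence $\sum_l|\psi_{kl}(x)|\lesssim 2^{k/2}$ of \eqref{eq:supnorm}. Your route is more elementary for readers unused to cumulants but leans on the pointwise localization of the basis; the paper's route uses the independence structure more cleanly and would go through for any basis satisfying \eqref{eq:hold} and \eqref{eq:supnorm}. Both are correct, and your anticipated ``logarithmic factor'' worry is precisely what the cumulant trick avoids without appealing to compact support. (Your first pass at the level-wise $L_m$ bound via \eqref{eq:supnorm} is a bit garbled, since \eqref{eq:supnorm} is a sup-norm statement and you are evaluating at fixed points; your second pass via $O(1)$ active indices is the correct one.)
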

In particular, the last proposition implies that indeed $\pe$ is a measure on $X=C[0,1]$. We call $\pe$ defined in \eqref{eq:randomtent} an  \emph{$\alpha$-regular $p$-exponential measure in $C[0,1]$.}

By Proposition \ref{prop:supp2} the topological support of $\pe$ is the space $C[0,1]$.
We next study the concentration function $\varphi_w(\cdot)$ of $\pe$, defined for centers $w\in C[0,1]$; see Definition \ref{defn:conc} where $X=C[0,1]$, with $\norm{\cdot}_X=\norm{\cdot}_{L_\infty}$. The next lemma identifies the space $\om$ in which we approximate the center $w\in C[0,1]$ in the first term of $\varphi_w$, as well as the shift space $\sh$. Note that these spaces can be defined in sequence space, independently of the parameter space $X$ and the Schauder basis in which we work. The lemma follows immediately by Proposition \ref{p:shift} and Definition \ref{defn:om}. 

\begin{lemma}\label{lem:omshl8}
Assume $\pe$ is an $\alpha$-regular $p$-exponential measure in $C[0,1]$. Then 
\[\om=\oma:=\{h\in\R^\infty: \sum_{k=1}^\infty\sum_{l=1}^{2^k} |h_{kl}|^p2^{(\frac{1}2+\alpha)pk}<\infty\}, \;\norm{h}_{\oma}=\Big(\sum_{k=1}^\infty\sum_{l=1}^{2^k} |h_{kl}|^p2^{(\frac{1}2+\alpha)pk}\Big)^\frac1p,\] 
\[\sh=\sha:=\{h\in\R^\infty: \sum_{k=1}^\infty\sum_{l=1}^{2^k}h_{kl}^22^{(1+2\alpha)k}<\infty\}, \;\norm{h}_{\sha}=\Big(\sum_{k=1}^\infty\sum_{l=1}^{2^k} h_{kl}^22^{(1+2\alpha)k }\Big)^\frac12.\] 
\end{lemma}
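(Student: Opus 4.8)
The plan is to apply Proposition \ref{p:shift} and Definition \ref{defn:om} directly to the $\alpha$-regular $p$-exponential measure in $C[0,1]$, after translating the double-indexed wavelet notation into the single-indexed sequence-space notation of Section \ref{sec:props}. First I would note that the measure $\pe$ of \eqref{eq:randomtent} is, in sequence space, precisely a $p$-exponential measure in the sense of Definition \ref{def:pexp}, where the scaling sequence is the collection $\{\gamma_{kl}\}_{k\in\N_0,\,1\le l\le 2^k}$ with $\gamma_{kl}=2^{-(\frac12+\alpha)k}$, under any fixed bijection between the index set $\{(k,l): k\in\N_0,\, 1\le l\le 2^k\}$ and $\N$. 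Since the defining formulas in Proposition \ref{p:shift} and Definition \ref{defn:om} are sums over the whole index set, they are invariant under the choice of such a bijection, so we may work with the natural $(k,l)$-indexing throughout.

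Next I would substitute $\gamma_\ell^{-p}=\gamma_{kl}^{-p}=2^{(\frac12+\alpha)pk}$ into the definition of $\om(\pe)$ from Definition \ref{defn:om}: this gives exactly
\[
\om=\Big\{h\in\R^\infty:\ \sum_{k}\sum_{l=1}^{2^k}|h_{kl}|^p\,2^{(\frac12+\alpha)pk}<\infty\Big\},
\qquad
\norm{h}_\om=\Big(\sum_k\sum_{l=1}^{2^k}|h_{kl}|^p\,2^{(\frac12+\alpha)pk}\Big)^{1/p},
\]
which is the claimed description of $\oma$. The only subtlety is that the weight $2^{(\frac12+\alpha)pk}$ depends on $k$ but not on $l$, so inside each resolution level all $2^k$ coefficients carry the same weight; this is immediate from \eqref{eq:randomtent}. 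The identity for $\sh(\pe)$ follows identically from Proposition \ref{p:shift}, replacing the exponent $p$ by $2$ and $\gamma_\ell^{-2}=2^{(1+2\alpha)k}$, giving the stated weighted $\ell_2$ space $\sha$ with norm $\norm{h}_{\sha}=\big(\sum_k\sum_l h_{kl}^2\,2^{(1+2\alpha)k}\big)^{1/2}$.

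There is essentially no obstacle here — the lemma is a direct bookkeeping specialization of the general results already proved, and the main point worth stressing in the write-up is the reindexing remark, namely that the sums appearing in Proposition \ref{p:shift} and Definition \ref{defn:om} are unconditionally convergent series of nonnegative terms and hence independent of the enumeration of the index set, so the double-sum expressions are well defined and agree with the single-sum expressions. I would therefore keep the proof to two or three sentences: identify the scaling sequence, invoke reindexing invariance, and read off the two spaces and norms from Definition \ref{defn:om} and Proposition \ref{p:shift}.
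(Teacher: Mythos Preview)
Your proposal is correct and takes essentially the same approach as the paper, which simply states that the lemma follows immediately from Proposition \ref{p:shift} and Definition \ref{defn:om}. Your additional remark about reindexing invariance via unconditional convergence of nonnegative series is a helpful elaboration but not a different route.
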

In fact, due to the asymptotic equivalence of the sequences $\gamma_{kl}=2^{-(\frac12+\alpha)k}$, $k\in\N, 1\leq l\leq 2^k$ and $\gamma_\ell=\ell^{-\frac12-\alpha}, \ell\in\N,$ we have that $\oma=B^{\alpha+\frac1p}_p$ and $\sha=B^{\alpha+\frac12}_2$, where $B^s_q$ are the Besov-type spaces of sequences defined in Definition \ref{defn:besov} for $d=1$.

We next study the centered small ball probability term in the concentration function. For the proof we use the techniques of \cite{WS96} which studies the Gaussian case.

\begin{proposition}\label{prop:supnorm}
Let $\pe$ be an $\alpha$-regular $p$-exponential measure in $C[0,1].$ Then as $\epsilon\to0$
\[-\log\pe(\{u\in C[0,1]: \norm{u}_{L_\infty}\leq \epsilon\})\lesssim \epsilon^{-\frac1\alpha}.\]
\end{proposition}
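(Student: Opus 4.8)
The plan is to adapt the wavelet blocking technique of \cite{WS96} developed for Gaussian measures, the only genuinely new point being the replacement of Gaussian tail bounds by the corresponding estimates for the density $f_p(x)\propto e^{-|x|^p/p}$. Writing a draw as $\uf=\sum_{k\ge0}\uf_k$ with $\uf_k=\sum_{l=1}^{2^k}\gamma_{kl}\xi_{kl}\psi_{kl}$, the choice $\gamma_{kl}=2^{-(\frac12+\alpha)k}$ in \eqref{eq:randomtent} combined with \eqref{eq:supnorm} gives
\[
\norm{\uf_k}_{L_\infty}\le C_2\,2^{-\alpha k}\max_{1\le l\le 2^k}|\xi_{kl}|.
\]
First I would fix a small $\delta>0$ and some $\eta\in(0,\alpha)$, set an anchor level $k_0:=\big\lceil\tfrac1\alpha\log_2(1/\epsilon)+c\big\rceil$ for a large enough constant $c$ (depending only on $p,\alpha,\delta,\eta,C_2$), and define the per-level thresholds
\[
t_k:=2^{-(\alpha+\delta)(k_0-k)}\quad(0\le k\le k_0),\qquad t_k:=2^{\eta(k-k_0)}\quad(k>k_0),
\]
together with $\epsilon_k:=C_2\,2^{-\alpha k}t_k$. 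A short computation shows that both $\sum_{k\le k_0}\epsilon_k$ and $\sum_{k>k_0}\epsilon_k$ are geometric series of order $2^{-\alpha k_0}$, so for $c$ large enough $\sum_{k\ge0}\epsilon_k\le\epsilon$, while still $2^{k_0}\asymp\epsilon^{-1/\alpha}$. No truncation in $k$ is needed.

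On the event $\bigcap_{k\ge0}\{\max_{1\le l\le 2^k}|\xi_{kl}|\le t_k\}$ one has $\norm{\uf_k}_{L_\infty}\le\epsilon_k$ for every $k$, hence $\norm{\uf}_{L_\infty}\le\sum_k\epsilon_k\le\epsilon$ (the series converges $\pe$-a.s.\ by Proposition~\ref{prop:hol}). Using independence of the coordinates,
\[
-\log\pe\big(\norm{\uf}_{L_\infty}\le\epsilon\big)\le\sum_{k\ge0}2^k\big[-\log\rp(|\xi|\le t_k)\big],\qquad\xi\sim f_p.
\]
To control the right-hand side I would use, for $t\le1$, the bound $\rp(|\xi|\le t)\ge 2f_p(1)\,t$ (valid because $f_p$ is bounded below on $[-1,1]$), which gives $-\log\rp(|\xi|\le t)\lesssim 1+\log(1/t)$; and for $t\ge1$, the tail estimate $\rp(|\xi|>t)\le\tfrac2{Z_p}\int_t^\infty x^{p-1}e^{-x^p/p}\,dx=\tfrac2{Z_p}e^{-t^p/p}$, where $Z_p:=2\int_0^\infty e^{-x^p/p}\,dx$, which gives $-\log\rp(|\xi|\le t)\lesssim e^{-t^p/p}$ for $t$ large (and a bounded constant for $1\le t$ bounded). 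Plugging in the $t_k$, the levels $k\le k_0$ contribute $\lesssim\sum_{k\le k_0}2^k\big(1+(k_0-k)\big)\lesssim 2^{k_0}\sum_{j\ge0}(1+j)2^{-j}\lesssim 2^{k_0}$, and the levels $k>k_0$ contribute $\lesssim\sum_{k>k_0}2^k e^{-t_k^p/p}=2^{k_0}\sum_{j\ge1}2^j e^{-2^{\eta p j}/p}\lesssim 2^{k_0}$, the last series converging because $t_k^p=2^{\eta p(k-k_0)}$ grows exponentially in $k$. Combining, $-\log\pe(\norm{\uf}_{L_\infty}\le\epsilon)\lesssim 2^{k_0}\asymp\epsilon^{-1/\alpha}$, which is the claim.

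The delicate point is the two-rate allocation of the budget $\sum_k\epsilon_k\le\epsilon$. Decaying the thresholds at rate $\alpha+\delta$ \emph{below} the anchor $k_0$ makes $\sum_{k\le k_0}\epsilon_k$ of order $2^{-\alpha k_0}$ \emph{independently of} the rate $\delta$; this is exactly what pins $k_0$ at $\tfrac1\alpha\log_2(1/\epsilon)$ and yields the sharp exponent $1/\alpha$. A single geometric rate $\eta<\alpha$ used throughout would instead place the anchor at $\tfrac1\eta\log_2(1/\epsilon)$ and give the weaker exponent $1/\eta$. Letting the thresholds \emph{grow} at rate $\eta$ above $k_0$ then makes the entropy series summable over all levels with tail contribution still $O(2^{k_0})$, which removes the need for a high-frequency truncation or a Markov-type tail estimate, and in particular avoids any logarithmic correction to $\epsilon^{-1/\alpha}$.
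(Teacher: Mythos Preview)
Your proof is correct and follows essentially the same route as the paper: both reduce to the event $\bigcap_k\{\max_l|\xi_{kl}|\le t_k\}$ via \eqref{eq:supnorm}, choose thresholds that decay geometrically below an anchor level $k_0\asymp\frac1\alpha\log_2(1/\epsilon)$ and grow geometrically above it, and then bound the log-probability levelwise. The paper fixes the specific rates $b_k=2^{\frac32\alpha(k-n)}$ below and $b_k=2^{\frac12\alpha(k-n)}$ above (i.e.\ your $\delta=\eta=\alpha/2$) and packages the probability estimate as a $p$-exponential version of \cite[Lemma~2.1]{WS96} via the univariate bounds of Lemma~\ref{lem:stolz}, whereas you carry out that computation inline with free parameters $\delta,\eta$; the content is the same.
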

Finally, in the next lemma we compute upper bounds on the first term in the concentration function $\varphi_w$, depending on the $(\beta,\infty,\infty)$-Besov regularity of $w$, which recall is identified with $\beta$-H\"older regularity when $\beta$ is non-integer. 
\begin{lemma}\label{lem:infl8}
Assume that $\pe$ is an $\alpha$-regular $p$-exponential measure in $C[0,1]$ and that $w_0\in B^\beta_{\infty\infty}$, $\beta>0$. Then, as $\epsilon\to0$
\begin{equation*}\inf_{h\in\oma:\norm{h-{w_0}}_{L_\infty}\leq\epsilon}\norm{h}_{\oma}^p\lesssim\left\{\begin{array}{ll}
    \epsilon^{\frac{\beta p -\alpha p -1}\beta}, & \;\mbox{if \, $\beta<\alpha+\frac1p$,}

                                     \\ \log(1/\epsilon), & \;\mbox{if \, $\beta=\alpha+\frac1p$,}
\\ 1, & 
\;\mbox{if \, $\beta> \alpha+\frac1p$.} 
                                                                    \end{array}\right.\end{equation*}
\end{lemma}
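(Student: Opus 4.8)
The plan is to bound the infimum by exhibiting an explicit near-minimizer: truncate $w_0$ at a suitable resolution level $N$ and estimate the $\om_\alpha$-norm of the truncation together with the $L_\infty$-error of the tail. Write $w_0 = \sum_{k\geq 0}\sum_{l=1}^{2^k}(w_0)_{kl}\psi_{kl}$ and, for a cutoff $N\in\N$ to be chosen, set $h^N = \sum_{k\le N}\sum_{l}(w_0)_{kl}\psi_{kl}$, so that $h^N\in\oma$ (it is a finite sum). The assumption $w_0\in B^\beta_{\infty\infty}$ gives $|(w_0)_{kl}|\le \|w_0\|_{B^\beta_{\infty\infty}}2^{-k(1/2+\beta)}$. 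Then I would estimate the two quantities:
\begin{itemize}
\item The approximation error in $L_\infty$: using the upper bound \eqref{eq:supnorm} on the sup-norm of a single level, $\|w_0-h^N\|_{L_\infty}\le \sum_{k>N}\|\sum_l (w_0)_{kl}\psi_{kl}\|_{L_\infty}\lesssim \sum_{k>N}2^{k/2}\cdot 2^{-k(1/2+\beta)} = \sum_{k>N}2^{-k\beta}\lesssim 2^{-N\beta}$.
\item The $\oma$-norm: by Lemma 6.2 (or directly from the definition), $\|h^N\|_{\oma}^p = \sum_{k\le N}\sum_l |(w_0)_{kl}|^p 2^{(1/2+\alpha)pk}\lesssim \sum_{k\le N}2^k\cdot 2^{-kp(1/2+\beta)}2^{kp(1/2+\alpha)} = \sum_{k\le N}2^{k(1+p(\alpha-\beta))}$.
\end{itemize}
The geometric sum in the last display behaves according to the sign of the exponent $1+p(\alpha-\beta)$: it is bounded by a constant when $\beta>\alpha+1/p$, is $\asymp N$ when $\beta=\alpha+1/p$, and is $\asymp 2^{N(1+p(\alpha-\beta))}$ when $\beta<\alpha+1/p$.

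To conclude, I would choose $N=N(\epsilon)$ so that the $L_\infty$-error is $\lesssim\epsilon$, i.e. $2^{-N\beta}\asymp\epsilon$, which gives $2^N\asymp\epsilon^{-1/\beta}$ and $N\asymp\log(1/\epsilon)$. Substituting into the three cases for $\|h^N\|_{\oma}^p$: in the regime $\beta>\alpha+1/p$ the bound is $O(1)$; in the regime $\beta=\alpha+1/p$ the bound is $O(N)=O(\log(1/\epsilon))$; and in the regime $\beta<\alpha+1/p$ the bound is $2^{N(1+p(\alpha-\beta))}\asymp\epsilon^{-(1+p(\alpha-\beta))/\beta}=\epsilon^{(\beta p-\alpha p-1)/\beta}$ (noting the exponent is positive since $\beta p-\alpha p-1<0$ makes $\epsilon^{(\beta p-\alpha p-1)/\beta}\to0$ — consistent with the infimum blowing up, so I should double-check the sign: we want the $\oma$-norm to $blow up$ as $\epsilon\to0$ in this regime, and indeed $1+p(\alpha-\beta)>0$ here so $2^{N(1+p(\alpha-\beta))}\to\infty$, matching $\epsilon^{(\beta p-\alpha p-1)/\beta}$ with negative exponent). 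Since the infimum is $\le \|h^N\|_{\oma}^p$ for this particular choice, all three bounds follow, with constants depending on $w_0$ only through $\|w_0\|_{B^\beta_{\infty\infty}}$.

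The routine parts are the two geometric-sum estimates and the bookkeeping of exponents. The one place requiring a little care is the use of \eqref{eq:supnorm}: it controls the sup-norm of each fixed resolution level by the $\ell_\infty$-norm of its coefficients times $2^{k/2}$, and I need to make sure the level-by-level triangle inequality for $\|w_0-h^N\|_{L_\infty}$ is legitimate, i.e. that the tail series converges in $C[0,1]$ — this is exactly guaranteed by $w_0\in B^\beta_{\infty\infty}\subset C[0,1]$ for $\beta>0$ together with the same geometric bound $\sum_{k>N}2^{-k\beta}<\infty$. The main (mild) obstacle is therefore just ensuring the boundary case $\beta=\alpha+1/p$ is handled cleanly and that no hidden dependence on $N$ (hence on $\epsilon$) sneaks into the constants beyond the stated logarithmic factor; there is no deep difficulty here, as this is the direct analogue of the classical Gaussian computation, with the Hilbert $\sh$-norm replaced by the Banach $\oma$-norm.
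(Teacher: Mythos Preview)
Your proposal is correct and follows essentially the same approach as the paper: truncate $w_0$ at resolution level $K$ (your $N$), use \eqref{eq:supnorm} together with the Besov coefficient bound $|(w_0)_{kl}|\le \|w_0\|_{B^\beta_{\infty\infty}}2^{-k(1/2+\beta)}$ to control the $L_\infty$-error as $2^{-K\beta}$, and then bound $\|h_{1:K}\|_{\oma}^p$ by the geometric sum $\sum_{k\le K}2^{k(1+p(\alpha-\beta))}$ with the same three-case analysis. The paper's proof is virtually identical in structure and detail.
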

Combining the previous lemmas, we can find upper bounds on the minimal solution $\epsilon_n$ of the inequality $\varphi_{w_0}(\epsilon_n)\leq n\epsilon_n^2$ depending on the H\"older regularity of $w_0$. Since the rates on the right hand sides of the bounds in Proposition \ref{prop:supnorm} and Lemma \ref{lem:infl8} are identical to the ones in {Lemmas \ref{lem:l2sb} and \ref{lem:infl2} (for $d=1$ and $q\geq2$), respectively, the proof is identical to the proof of part (i) of Proposition \ref{prop:ubl2} and is hence omitted.}
\begin{proposition}\label{prop:ubl8}
Assume that $\pe$ is an $\alpha$-regular $p$-exponential measure in $C[0,1]$ and that $w_0\in B^\beta_{\infty\infty}$, $\beta>0$. Then
as $n\to\infty$ the rate $\epsilon_n\asymp\rho_n^{\alpha,\beta,p}$ satisfies the inequality $\varphi_{w_0}(\epsilon_n)\leq n\epsilon_n^2$, where \[\rho_n^{\alpha,\beta,p}:=\left\{\begin{array}{ll}n^{-\frac{\beta}{1+2\beta+p(\alpha-\beta)}}, &  
\;\mbox{if \,$\beta\leq \alpha$,} 
                                     \\ n^{-\frac{\alpha}{1+2\alpha}}, &  \;\mbox{if \,$\beta>\alpha$.}
                                    \end{array}\right.\]
\end{proposition}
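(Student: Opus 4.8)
The plan is to feed the two estimates already in hand — the centered small ball bound of Proposition~\ref{prop:supnorm} and the approximation bound of Lemma~\ref{lem:infl8} — into the definition of the concentration function and then balance rates, exactly mirroring the proof of part (i) of Proposition~\ref{prop:ubl2} with $d=1$. With $X=C[0,1]$ and $\norm{\cdot}_X=\norm{\cdot}_{L_\infty}$, Definition~\ref{defn:conc} reads
\[\varphi_{w_0}(\epsilon)=\frac1p\inf_{h\in\oma:\,\norm{h-w_0}_{L_\infty}\leq\epsilon}\norm{h}_{\oma}^p-\log\pe(\epsilon B_X).\]
By Proposition~\ref{prop:supnorm} the second term is $\lesssim\epsilon^{-1/\alpha}$ as $\epsilon\to0$, and by Lemma~\ref{lem:infl8} the first term is $\lesssim\epsilon^{(\beta p-\alpha p-1)/\beta}$ if $\beta<\alpha+\frac1p$, $\lesssim\log(1/\epsilon)$ if $\beta=\alpha+\frac1p$, and $\lesssim1$ if $\beta>\alpha+\frac1p$ (the constant factor $1/p\in[\frac12,1]$ being harmless). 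Hence $\varphi_{w_0}(\epsilon)$ is bounded, up to constants, by the sum of these two contributions.

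Next I would identify which contribution blows up faster as $\epsilon\to0$. Outside the regime $\beta<\alpha+\frac1p$ the infimum term stays bounded or grows only logarithmically, so $\varphi_{w_0}(\epsilon)\lesssim\epsilon^{-1/\alpha}$. Within that regime, comparing the negative exponents $(\beta p-\alpha p-1)/\beta$ and $-1/\alpha$ by cross-multiplication shows that the infimum term dominates precisely when $\beta\leq\alpha$; note that $p\leq2$ ensures $1+2\beta+p(\alpha-\beta)>0$, so the exponents met below are genuinely negative. Therefore, if $\beta\leq\alpha$ then $\varphi_{w_0}(\epsilon)\lesssim\epsilon^{(\beta p-\alpha p-1)/\beta}$, and $\varphi_{w_0}(\epsilon_n)\leq n\epsilon_n^2$ is implied by $\epsilon_n^{(\beta p-\alpha p-1)/\beta-2}\lesssim n$, i.e.\ by $\epsilon_n^{-(1+2\beta+p(\alpha-\beta))/\beta}\lesssim n$, which is solved by $\epsilon_n\asymp n^{-\beta/(1+2\beta+p(\alpha-\beta))}$. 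If instead $\beta>\alpha$ then $\varphi_{w_0}(\epsilon)\lesssim\epsilon^{-1/\alpha}$, and $\varphi_{w_0}(\epsilon_n)\leq n\epsilon_n^2$ is implied by $\epsilon_n^{-(1+2\alpha)/\alpha}\lesssim n$, solved by $\epsilon_n\asymp n^{-\alpha/(1+2\alpha)}$. In either regime $n\epsilon_n^2\to\infty$, hence $\epsilon_n\to0$ and the asymptotic bounds invoked above are legitimate; moreover, when $\beta>\alpha$ a further cross-multiplication gives $\alpha/(1+2\alpha)\leq\beta/(1+2\beta+p(\alpha-\beta))$, so the chosen rate is slow enough to also absorb the subleading infimum term. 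This yields $\epsilon_n\asymp\rho_n^{\alpha,\beta,p}$.

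The argument carries no genuine difficulty; the only care required is in the sign checks on the exponents — to decide which of the two summands of $\varphi_{w_0}$ dominates as $\epsilon\to0$, to confirm that solving each of the resulting inequalities produces an honest decaying rate, and to verify that the single candidate $\epsilon_n=\rho_n^{\alpha,\beta,p}$ meets both requirements simultaneously. These are exactly the computations already carried out in the proof of Proposition~\ref{prop:ubl2}(i), which is why the statement follows by repeating that argument verbatim.
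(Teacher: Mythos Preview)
Your proposal is correct and follows exactly the approach indicated in the paper: combine the small ball bound from Proposition~\ref{prop:supnorm} with the approximation bound from Lemma~\ref{lem:infl8}, determine which term of the concentration function dominates (the infimum term when $\beta\leq\alpha$, the small ball term when $\beta>\alpha$), and solve $\varphi_{w_0}(\epsilon_n)\lesssim n\epsilon_n^2$ for each regime. The paper itself omits the proof, noting only that it is identical to that of Proposition~\ref{prop:ubl2}(i) with $d=1$ and $q\geq2$; you have simply written out those details, and your exponent comparisons and rate computations are all correct.
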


We next study the quality of the approximation of elements of $\sha$ by elements of $\oma$ in the supremum norm, that is, determine functions $g, f$ such that \eqref{eq:inf} in Section \ref{sec:main} holds. 

\begin{lemma}\label{lem:l8dom}
Let $\pe$ be an $\alpha$-regular $p$-exponential measure in $C[0,1]$. Then there exists $c>0$ depending only on the Schauder basis, $p$ and $\alpha$, such that the functions $f(a)=ca^{\frac{2-p+2\alpha p}{2\alpha}}$ and $g(\epsilon)=\epsilon^{-\frac1\alpha}$ satisfy \eqref{eq:inf}. 
\end{lemma}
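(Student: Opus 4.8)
The plan is to reduce the statement to an elementary approximation problem on wavelet coefficients, truncating at an appropriate resolution level. Fix $h\in a B_{\sha}$, so that by Lemma~\ref{lem:omshl8} (and the characterization $\sha=B^{\alpha+\frac12}_2$) we have $\sum_{k\ge 1}\sum_{l=1}^{2^k}h_{kl}^2 2^{(1+2\alpha)k}\le a^2$. Given $\epsilon>0$, I would choose a cut-off level $J=J(\epsilon)\in\N$ and define $x$ by keeping the coefficients of $h$ up to level $J$ and zeroing the rest, i.e.\ $x_{kl}=h_{kl}$ for $k\le J$ and $x_{kl}=0$ for $k>J$. Then $x\in\oma$ automatically (it is a finite expansion), so it is an admissible competitor in the infimum, and the two quantities to control are the supremum-norm error $\norm{x-h}_{L_\infty}$ and the $\oma$-norm $\norm{x}_{\oma}^p$.

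For the error term, $x-h$ is the tail $\sum_{k>J}\sum_l h_{kl}\psi_{kl}$, and using the level-wise supremum bound \eqref{eq:supnorm} together with Cauchy--Schwarz over $l$ at each level $k$, one gets
\[
\Bigl\|\sum_{l=1}^{2^k}h_{kl}\psi_{kl}\Bigr\|_{L_\infty}\le C_2 2^{k/2}\sup_l|h_{kl}|\le C_2 2^{k/2}\Bigl(\sum_l h_{kl}^2\Bigr)^{1/2}=C_2 2^{k/2}2^{-(1/2+\alpha)k}\Bigl(2^{(1+2\alpha)k}\sum_l h_{kl}^2\Bigr)^{1/2},
\]
so summing the geometric-type series in $k>J$ and using $\sum_{k>J}2^{(1+2\alpha)k}\sum_l h_{kl}^2\le a^2$ (crudely) or a level-wise splitting gives $\norm{x-h}_{L_\infty}\lesssim a\,2^{-\alpha J}$. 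Requiring this to be $\le\epsilon$ forces essentially $2^{J}\asymp (a/\epsilon)^{1/\alpha}$, which is the source of the exponent $1/\alpha$ appearing in $g$. For the $\oma$-norm, $\norm{x}_{\oma}^p=\sum_{k\le J}\sum_l |h_{kl}|^p 2^{(1/2+\alpha)pk}$; bounding $\sum_l|h_{kl}|^p$ by $\sum_l h_{kl}^2$ raised to power $p/2$ times a factor $(2^k)^{1-p/2}$ (the standard $\ell_p$--$\ell_2$ inequality on $\R^{2^k}$, valid since $p\le 2$), and then inserting $\sum_l h_{kl}^2\le a^2 2^{-(1+2\alpha)k}$, one obtains a sum $\lesssim a^p\sum_{k\le J}2^{k(1-p/2)+(1/2+\alpha)pk-(1/2+\alpha)pk}$-type expression that is geometric in $k$ and dominated by its top term $2^{J(1-p/2)}$ (this is where $1-p/2\ge 0$ is used). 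Substituting $2^J\asymp(a/\epsilon)^{1/\alpha}$ yields $\norm{x}_{\oma}^p\lesssim a^p (a/\epsilon)^{(1-p/2)/\alpha}=c\,a^{p+(2-p)/(2\alpha)}\epsilon^{-(2-p)/(2\alpha)}=c\,a^{\frac{2-p+2\alpha p}{2\alpha}}\cdot\bigl(\epsilon^{-1/\alpha}\bigr)^{1-p/2}$, which is exactly $f(a)g(\epsilon)^{1-p/2}$ with the stated $f,g$. Monotonicity of $f$ (non-decreasing, polynomial growth in $a$ since the exponent $\frac{2-p+2\alpha p}{2\alpha}>0$) and of $g$ (non-increasing) is immediate.

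The main obstacle I anticipate is bookkeeping the level-wise geometric sums carefully enough that the constants depend only on $p$, $\alpha$ and the basis (via $C_2$), and in particular handling the boundary regime $p=2$ where $1-p/2=0$ and the $\oma$-norm bound must not blow up (there the sum over $k\le J$ is genuinely of order $J$ unless one is careful, but with $p=2$ one has $\oma=\sha$ and $\norm{x}_{\oma}\le\norm{h}_{\sha}\le a$, so the trivial bound suffices and matches $f(a)=ca^{1+1/(2\alpha)\cdot 0}=ca$ up to the harmless monotone factor). A clean way to unify both regimes is to split each level's contribution as in the Gaussian case and optimize $J$ as a free parameter rather than fixing $2^J\asymp(a/\epsilon)^{1/\alpha}$ at the outset; optimizing the sum $a\,2^{-\alpha J}$ (error) against $a^p 2^{J(1-p/2)}$ ($\oma$-norm, after the $\ell_p$--$\ell_2$ step) reproduces the claimed exponents and makes the $p=2$ degeneracy transparent. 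Everything else is routine.
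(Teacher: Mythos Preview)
Your proposal is correct and follows essentially the same approach as the paper: truncate $h$ at a resolution level $J$ chosen so that the sup-norm tail (controlled via \eqref{eq:supnorm} and Cauchy--Schwarz) is at most $\epsilon$, then bound $\norm{x}_{\oma}^p$ using the $\ell_p$--$\ell_2$ inequality for $p\le 2$. The only cosmetic difference is that the paper applies H\"older once over the full double index set $\{(k,l):k\le K,\,1\le l\le 2^k\}$, obtaining the factor $(2^K-1)^{1-p/2}$ directly, whereas you apply it level by level and then sum the geometric series; both yield the same bound. (A minor slip: in your $p=2$ remark the exponent in $f$ is $\frac{2-p+2\alpha p}{2\alpha}\big|_{p=2}=2$, not $1$, but this is harmless since the needed bound $\norm{x}_{\oma}^2\le a^2$ is exactly what you obtain.)
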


A straightforward computation shows that for the above $g$ and $f$, the rate $\epsilon_n=\rho_n^{\alpha,\beta,p}$ is such that the right hand side of the complexity bound \eqref{eq:adh22} in Theorem \ref{t:main} is dominated by $n\epsilon_n^2$ only if $\beta\leq \alpha-\frac1{2-p+2\alpha p}$. This means that the complexity bound we obtain from Theorem \ref{t:main} does not match the conditions of general results like \cite[Theorems 8.9 and 8.19]{GV17} and we need to use Corollary \ref{c:main} to get contraction rates. To this end we solve \eqref{eq:sub} and find that for these functions $f,g$ and for $\epsilon_n=\rho_n^{\alpha,\beta,p}$, the fastest decaying solution is $\tilde{\epsilon}_n\asymp\tilde{\rho}_n^{\alpha,\beta,p},$ where
\begin{equation}\label{eq:ratel8}\tilde{\rho}_n^{\alpha,\beta,p}:=\left\{\begin{array}{ll}n^{\frac{(2-p)(1-2\alpha)}{8\alpha}-\frac{p\beta}{2(1+2\beta+p(\alpha-\beta))}}, &  
\;\mbox{if \,$\beta\leq \alpha$,} 
                                     \\ n^{\frac{2-p-8\alpha^2}{8\alpha(1+2\alpha)}}, &  \;\mbox{if \,$\beta>\alpha$.}
                                    \end{array}\right.\end{equation}
For a fixed value of the regularity of the truth $\beta>0$, note that  $\tilde{\rho}_n^{\alpha,\beta,p}$ decays only for sufficiently large prior regularity $\alpha$. For example, if $\alpha<\beta$, we have decay only for  $\alpha>\sqrt{\frac{2-p}8}$. As $p\to2$, since the difficulty in the complexity bound \eqref{eq:adh22} disappears, the rates $\tilde{\rho}_n^{\alpha,\beta,p}$ approach the rates $\rho_n^{\alpha,\beta,p}$.

For example, combining these considerations with Theorem \ref{thm:de}, we get immediately the following result giving contraction rates for density estimation.

\begin{theorem}\label{thm:decontr}
Consider the density estimation model  of Subsection \ref{ssec:de}, and let $W$ be an $\alpha$-regular $p$-exponential random element in $C[0,1]$, $\alpha>0, p\in[1,2]$. Assume $w_0=\log\pi_0\in B^\beta_{\infty\infty}, \beta>0$ and denote by $P^n_0$ the distribution of the vector $(X_1,\dots,X_n)$. Let $\rho_n^{\alpha,\beta,p}, \tilde{\rho}_n^{\alpha,\beta,p}$ be defined as in Proposition \ref{prop:ubl8} and \eqref{eq:ratel8}, respectively. Then for $M$ large enough, as $n\to\infty$
\[\Pi_n(\pi:d_H(\pi,\pi_0)>M(\rho_n^{\alpha,\beta,p}\vee\tilde{\rho}_n^{\alpha,\beta,p})|X_1,\dots,X_n)\to0,\] in $P^n_0$-probability. 
\end{theorem}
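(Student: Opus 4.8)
The plan is to deduce Theorem~\ref{thm:decontr} as a direct application of the abstract density-estimation result Theorem~\ref{thm:de}, once we have assembled the ingredients prepared in the surrounding lemmas. First I would check the structural hypotheses of Theorem~\ref{thm:de}: the prior $W$ is an $\alpha$-regular $p$-exponential random element in $C[0,1]$, which by Proposition~\ref{prop:hol} indeed lives in a separable Banach subspace of $L_\infty([0,1])$ possessing a Schauder basis (the wavelet basis $\{\psi_{kl}\}$), and its sample paths are almost surely continuous (in fact $s$-H\"older for every $s<\alpha\wedge1$). Since by Proposition~\ref{prop:supp2} the topological support of $\pe$ is all of $C[0,1]$, the assumption $w_0=\log\pi_0\in B^\beta_{\infty\infty}\subset C[0,1]$ guarantees that $w_0$ belongs to the support of $W$, so $\pi_0$ is a valid centre for the prior. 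Thus all qualitative conditions of Theorem~\ref{thm:de} are met.

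Next I would identify the two rate sequences that enter the conclusion of Theorem~\ref{thm:de}. The sequence $\epsilon_n$ must satisfy the concentration-function inequality $\varphi_{w_0}(\epsilon_n)\le n\epsilon_n^2$ with respect to $\norm{\cdot}_{L_\infty}$; Proposition~\ref{prop:ubl8} supplies exactly such a sequence, namely $\epsilon_n\asymp\rho_n^{\alpha,\beta,p}$, where the two regimes $\beta\le\alpha$ and $\beta>\alpha$ are as stated there. The second sequence $\tilde\epsilon_n$ must satisfy the approximation inequality \eqref{eq:sub}, i.e. $f(n^{1/2}\epsilon_n)g(\epsilon_n)^{1-p/2}\lesssim n\tilde\epsilon_n^2$, where $f,g$ are \emph{any} pair of functions verifying the approximation bound \eqref{eq:inf}. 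Here I would invoke Lemma~\ref{lem:l8dom}, which produces the explicit admissible pair $f(a)=ca^{(2-p+2\alpha p)/(2\alpha)}$ and $g(\epsilon)=\epsilon^{-1/\alpha}$; with this choice one substitutes $\epsilon_n=\rho_n^{\alpha,\beta,p}$ into \eqref{eq:sub} and solves for the fastest decaying $\tilde\epsilon_n$, arriving at $\tilde\epsilon_n\asymp\tilde\rho_n^{\alpha,\beta,p}$ with the two-regime formula \eqref{eq:ratel8}. This substitution is a routine computation: one writes $n^{1/2}\rho_n^{\alpha,\beta,p}$ as a power of $n$, raises it to the exponent $(2-p+2\alpha p)/(2\alpha)$, multiplies by $(\rho_n^{\alpha,\beta,p})^{-(1/\alpha)(1-p/2)}$, divides by $n$, and takes the square root, doing the $\beta\le\alpha$ and $\beta>\alpha$ cases separately.

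With $\epsilon_n=\rho_n^{\alpha,\beta,p}$ and $\tilde\epsilon_n=\tilde\rho_n^{\alpha,\beta,p}$ in hand, together with the admissible functions $f,g$ from Lemma~\ref{lem:l8dom}, Theorem~\ref{thm:de} applies verbatim and yields $\Pi_n(\pi:d_H(\pi,\pi_0)>M(\epsilon_n\vee\tilde\epsilon_n)\mid X_1,\dots,X_n)\to0$ in $P^n_0$-probability for $M$ large, which is exactly the assertion of Theorem~\ref{thm:decontr}. I do not expect any genuine obstacle here, since essentially all the work has been front-loaded into Proposition~\ref{prop:hol}, Proposition~\ref{prop:supnorm}, Lemma~\ref{lem:infl8}, Proposition~\ref{prop:ubl8}, Lemma~\ref{lem:l8dom}, and the general machinery of Theorem~\ref{t:main}/Corollary~\ref{c:main} feeding Theorem~\ref{thm:de}. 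The only point demanding a little care is bookkeeping the two regimes $\beta\le\alpha$ versus $\beta>\alpha$ consistently across $\rho_n^{\alpha,\beta,p}$ and $\tilde\rho_n^{\alpha,\beta,p}$, and confirming that Lemma~\ref{lem:l8dom}'s functions $f,g$ are exactly of the form required by \eqref{eq:inf} (monotonicity of $f,g$ and at most polynomial growth of $f$), so that the hypotheses of Theorem~\ref{thm:de} — which references \eqref{eq:blow}, \eqref{eq:sub} and \eqref{eq:inf} — are literally satisfied; thus the proof reduces to citing these results and performing the elementary exponent arithmetic, and can be stated very briefly.
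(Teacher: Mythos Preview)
Your proposal is correct and follows exactly the paper's approach: the paper states that the theorem follows ``immediately'' by combining Theorem~\ref{thm:de} with Proposition~\ref{prop:ubl8}, Lemma~\ref{lem:l8dom}, and the computation of $\tilde\rho_n^{\alpha,\beta,p}$ in \eqref{eq:ratel8}. Your unpacking of the structural hypotheses (continuity of paths via Proposition~\ref{prop:hol}, support via Proposition~\ref{prop:supp2}, $w_0\in B^\beta_{\infty\infty}\subset C[0,1]$) and the exponent arithmetic is precisely what the paper leaves implicit.
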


For $p=2$, we have that $\rho_n^{\alpha,\beta,2}=\tilde{\rho}_n^{\alpha,\beta,2}$ and we recover existing contraction rates for Gaussian  priors, see \cite[Section 11.4]{GV17} or  \cite[Theorem 7.3.9]{GN16}. In this case, if the regularity of the prior matches the regularity of the truth $\alpha=\beta$, we get the minimax estimation rate in the Hellinger distance for functions which are $\beta$-H\"older continuous, $n^{-\frac{\beta}{1+2\beta}}$.  \mods{For $p\in[1,2)$, a straightforward calculation shows that the rate $\tilde{\rho}_n^{\alpha,\beta,p}$ is slower than $\rho_n^{\alpha,\beta,p}$ unless $0<\beta\leq \alpha-\frac1{2-p+2\alpha p}$. In particular, for $\alpha=\beta$ we only have contraction if the prior is sufficiently regular, $\alpha>\sqrt{\frac{2-p}8}$, with contraction rate $\tilde{\rho}_n^{\alpha,\alpha,p}$ which is slower than the minimax rate.} As $p$ increases towards $p=2$ the gap disappears; likewise for large $\alpha$.

It appears that contrary to the Gaussian case $p=2$, studying prior concentration and using general contraction results relying on prior mass and entropy conditions, is not optimal for proving contraction rates for $p$-exponential priors in $C[0,1]$ when $p\in[1,2)$. This is due to the more complicated complexity bound \eqref{eq:adh22} compared to the Gaussian case, which in this setting affects the rates because of the poor approximation quality of $\sha$ by $\oma$ in the supremum norm. {Note that rescaling the prior as considered in Subsection \ref{ssec:resc}, does not help with this issue}. In general contraction results like \cite[Theorem 8.9]{GV17}, the entropy condition is used to construct certain necessary tests. 
Directly constructing the necessary tests and using other general contraction results which do not rely on entropy conditions, for example \cite[Theorem 8.12]{GV17}, may resolve this issue. This is out of the scope of the present paper, but it is a possible future direction.

{\bf Acknowledgements} {The authors would like to thank Emanuel Milman and Mikhail Lifshits for providing very useful guidance regarding isoperimetry and small ball probabilities, respectively. SA would also like to thank Richard Nickl, Ismael Castillo, Kolyan Ray and Aad van der Vaart for answering many questions and providing important insight regarding posterior contraction rates. SA is particularly grateful to Sven Wang for sharing the concept of using rescaling for improving rates rather than just for adaptation, and to Richard Nickl for providing a very stimulating research environment during a research visit to Cambridge. Finally, the authors are thankful to an anonymous associate editor and two anonymous referees, for corrections and suggestions that led to a substantial improvement of some of the results in this article.}

\bibliographystyle{abbrv}
\bibliography{references}

\begin{thebibliography}{10}

\bibitem{ABDH18}
S.~Agapiou, M.~Burger, M.~Dashti, and T.~Helin.
\newblock Sparsity-promoting and edge-preserving maximum a posteriori
  estimators in non-parametric {B}ayesian inverse problems.
\newblock {\em Inverse Problems}, 34(4):045002, 2018.

\bibitem{APSS17}
S.~Agapiou, O.~Papaspiliopoulos, D.~Sanz-Alonso, and A.~M. Stuart.
\newblock Importance sampling: Intrinsic dimension and computational cost.
\newblock {\em Statistical Science}, 32(3):405--431, 2017.

\bibitem{AGR13}
J.~Arbel, G.~Gayraud, and J.~Rousseau.
\newblock Bayesian optimal adaptive estimation using a sieve prior.
\newblock {\em Scandinavian journal of statistics}, 40(3):549--570, 2013.

\bibitem{FA07}
F.~Aurzada.
\newblock On the lower tail probabilities of some random sequences in lp.
\newblock {\em Journal of Theoretical Probability}, 20(4):843--858, 2007.

\bibitem{BG03}
E.~Belitser and S.~Ghosal.
\newblock Adaptive bayesian inference on the mean of an infinite-dimensional
  normal distribution.
\newblock {\em The Annals of Statistics}, 31(2):536--559, 2003.

\bibitem{Boga_gaussian}
V.~I. Bogachev.
\newblock {\em Gaussian measures}, volume~62 of {\em Mathematical Surveys and
  Monographs}.
\newblock American Mathematical Society, Providence, RI, 1998.

\bibitem{Boga10}
V.~I. Bogachev.
\newblock {\em Differentiable measures and the {M}alliavin calculus}, volume
  164 of {\em Mathematical Surveys and Monographs}.
\newblock American Mathematical Society, Providence, RI, 2010.

\bibitem{CB74}
C.~Borell.
\newblock Convex measures on locally convex spaces.
\newblock {\em Arkiv f{\"o}r Matematik}, 12(1):239--252, 1974.

\bibitem{CB75}
C.~Borell.
\newblock The brunn-minkowski inequality in gauss space.
\newblock {\em Inventiones mathematicae}, 30(2):207--216, 1975.

\bibitem{rev5}
E.~Candes and J.~Romberg.
\newblock l1-magic: Recovery of sparse signals via convex programming.
\newblock {\em URL: www.acm.caltech.edu/l1magic/downloads/l1magic. pdf}, 4:14,
  2005.

\bibitem{rev6}
E.~J. Candes and D.~L. Donoho.
\newblock Curvelets: A surprisingly effective nonadaptive representation for
  objects with edges.
\newblock Technical report, Stanford University, Department of Statistics,
  2000.

\bibitem{IC08}
I.~Castillo.
\newblock Lower bounds for posterior rates with gaussian process priors.
\newblock {\em Electronic Journal of Statistics}, 2:1281--1299, 2008.

\bibitem{CN13}
I.~Castillo and R.~Nickl.
\newblock Nonparametric {B}ernstein--von {M}ises theorems in {G}aussian white
  noise.
\newblock {\em The Annals of Statistics}, 41(4):1999--2028, 2013.

\bibitem{CSV15}
I.~Castillo, J.~Schmidt-Hieber, and A.~van~der Vaart.
\newblock Bayesian linear regression with sparse priors.
\newblock {\em Ann. Statist.}, 43(5):1986--2018, 2015.

\bibitem{CDV93}
A.~Cohen, I.~Daubechies, and P.~Vial.
\newblock Wavelets on the interval and fast wavelet transforms.
\newblock {\em Applied and computational harmonic analysis}, 1993.

\bibitem{GDP06}
G.~Da~Prato.
\newblock {\em An introduction to infinite-dimensional analysis}.
\newblock Springer Science \&amp; Business Media, 2006.

\bibitem{DS15}
M.~Dashti and A.~M. Stuart.
\newblock The {B}ayesian approach to inverse problems.
\newblock In {\em Handbook of uncertainty quantification. {V}ol. 1, 2, 3},
  pages 311--428. Springer, Cham, 2017.

\bibitem{rev7}
I.~Daubechies, M.~Defrise, and C.~De~Mol.
\newblock An iterative thresholding algorithm for linear inverse problems with
  a sparsity constraint.
\newblock {\em Communications on pure and applied mathematics},
  57(11):1413--1457, 2004.

\bibitem{rev8}
D.~L. Donoho and M.~Elad.
\newblock Optimally sparse representation in general (nonorthogonal)
  dictionaries via l1 minimization.
\newblock {\em Proceedings of the National Academy of Sciences},
  100(5):2197--2202, 2003.

\bibitem{rev9}
D.~L. Donoho and X.~Huo.
\newblock Uncertainty principles and ideal atomic decomposition.
\newblock {\em IEEE Transactions on Information Theory}, 47(7):2845--2862,
  2001.

\bibitem{DJ98}
D.~L. Donoho and I.~M. Johnstone.
\newblock Minimax estimation via wavelet shrinkage.
\newblock {\em The Annals of Statistics}, 26(3):879--921, 1998.

\bibitem{DLM90}
D.~L. Donoho, R.~C. Liu, and B.~MacGibbon.
\newblock Minimax risk over hyperrectangles, and implications.
\newblock {\em Ann. Statist.}, 18(3):1416--1437, 1990.

\bibitem{GGV00}
S.~Ghosal, J.~K. Ghosh, and A.~W. Van Der~Vaart.
\newblock Convergence rates of posterior distributions.
\newblock {\em Annals of Statistics}, pages 500--531, 2000.

\bibitem{GV07}
S.~Ghosal and A.~Van Der~Vaart.
\newblock Convergence rates of posterior distributions for noniid observations.
\newblock {\em The Annals of Statistics}, 35(1):192--223, 2007.

\bibitem{GV17}
S.~Ghosal and A.~van~der Vaart.
\newblock {\em Fundamentals of nonparametric {B}ayesian inference}.
\newblock Cambridge Series in Statistical and Probabilistic Mathematics.
  Cambridge University Press, 2017.

\bibitem{GN11}
E.~Gin{\'e} and R.~Nickl.
\newblock Rates of contraction for posterior distributions in lr-metrics.
\newblock {\em The Annals of Statistics}, 39(6):2883--2911, 2011.

\bibitem{GN16}
E.~Gin{\'e} and R.~Nickl.
\newblock {\em Mathematical foundations of infinite-dimensional statistical
  models}, volume~40.
\newblock Cambridge University Press, 2016.

\bibitem{HKPT98}
W.~H\"{a}rdle, G.~Kerkyacharian, D.~Picard, and A.~Tsybakov.
\newblock {\em Wavelets, approximation, and statistical applications}, volume
  129 of {\em Lecture Notes in Statistics}.
\newblock Springer-Verlag, New York, 1998.

\bibitem{HB15}
T.~Helin and M.~Burger.
\newblock Maximum a posteriori probability estimates in infinite-dimensional
  {B}ayesian inverse problems.
\newblock {\em Inverse Problems}, 31(8):085009, 2015.

\bibitem{rev10}
I.~M. Johnstone.
\newblock Minimax bayes, asymptotic minimax and sparse wavelet priors.
\newblock In {\em Statistical Decision Theory and Related Topics V}, pages
  303--326. Springer, 1994.

\bibitem{AK03}
A.~Kagan.
\newblock Statistical approach to some mathematical problems.
\newblock {\em Austrian Journal of Statistics}, 32:71--83, 2003.

\bibitem{OV06}
O.~Kallenberg.
\newblock {\em Foundations of modern probability}.
\newblock Springer Science \&amp; Business Media, 2006.

\bibitem{KW70}
G.~S. Kimeldorf and G.~Wahba.
\newblock A correspondence between bayesian estimation on stochastic processes
  and smoothing by splines.
\newblock {\em The Annals of Mathematical Statistics}, 41(2):495--502, 1970.

\bibitem{KSVZ12}
B.~T. Knapik, B.~T. Szab\'{o}, A.~W. van~der Vaart, and J.~H. van Zanten.
\newblock Bayes procedures for adaptive inference in inverse problems for the
  white noise model.
\newblock {\em Probab. Theory Related Fields}, 164(3-4):771--813, 2016.

\bibitem{KVZ12}
B.~T. Knapik, A.~W. van Der~Vaart, and J.~H. van Zanten.
\newblock {B}ayesian inverse problems with {G}aussian priors.
\newblock {\em The Annals of Statistics}, 39(5):2626--2657, 2011.

\bibitem{KLNS12}
V.~Kolehmainen, M.~Lassas, K.~Niinim{\"a}ki, and S.~Siltanen.
\newblock Sparsity-promoting bayesian inversion.
\newblock {\em Inverse Problems}, 28(2):025005, 2012.

\bibitem{LSS09}
M.~Lassas, E.~Saksman, and S.~Siltanen.
\newblock Discretization-invariant {B}ayesian inversion and {B}esov space
  priors.
\newblock {\em Inverse Probl. Imaging}, 3(1):87--122, 2009.

\bibitem{ML05}
M.~Ledoux.
\newblock {\em The concentration of measure phenomenon}.
\newblock Number~89. American Mathematical Soc., 2005.

\bibitem{PL88}
P.~J. Lenk.
\newblock The logistic normal distribution for bayesian, nonparametric,
  predictive densities.
\newblock {\em Journal of the American Statistical Association},
  83(402):509--516, 1988.

\bibitem{MP03}
P.~Math\'{e} and S.~V. Pereverzev.
\newblock Geometry of linear ill-posed problems in variable {H}ilbert scales.
\newblock {\em Inverse Problems}, 19(3):789--803, 2003.

\bibitem{Meyer92}
Y.~Meyer.
\newblock Wavelets and operators.
\newblock volume 37 of Cambridge Studies in Advanced Mathematics, 1992.

\bibitem{PPRS12}
O.~Papaspiliopoulos, Y.~Pokern, G.~O. Roberts, and A.~M. Stuart.
\newblock Nonparametric estimation of diffusions: a differential equations
  approach.
\newblock {\em Biometrika}, 99(3):511--531, 2012.

\bibitem{PSZ13}
Y.~Pokern, A.~M. Stuart, and J.~H. van Zanten.
\newblock Posterior consistency via precision operators for {B}ayesian
  nonparametric drift estimation in {SDE}s.
\newblock {\em Stochastic Process. Appl.}, 123(2):603--628, 2013.

\bibitem{KR13}
K.~Ray et~al.
\newblock Bayesian inverse problems with non-conjugate priors.
\newblock {\em Electronic Journal of Statistics}, 7:2516--2549, 2013.

\bibitem{CR10}
C.~Roberto.
\newblock Isoperimetry for product of probability measures: recent results.
\newblock {\em Markov Process. Related Fields}, 16(4):617--634, 2010.

\bibitem{RS17}
J.~Rousseau and B.~Szabo.
\newblock Asymptotic behaviour of the empirical {B}ayes posteriors associated
  to maximum marginal likelihood estimator.
\newblock {\em Ann. Statist.}, 45(2):833--865, 2017.

\bibitem{schmeisser1987topics}
H.~J. Schmeisser and H.~Triebel.
\newblock {\em Topics in {F}ourier analysis and function spaces}, volume~42 of
  {\em Mathematics and its Applications in Physics and Technology}.
\newblock Akademische Verlagsgesellschaft Geest \& Portig K. G., Leipzig, 1987.

\bibitem{SW01}
X.~Shen and L.~Wasserman.
\newblock Rates of convergence of posterior distributions.
\newblock {\em The Annals of Statistics}, 29(3):687--714, 2001.

\bibitem{LAS65}
L.~Shepp.
\newblock Distinguishing a sequence of random variables from a translate of
  itself.
\newblock {\em The Annals of Mathematical Statistics}, 36(4):1107--1112, 1965.

\bibitem{SKW99}
T.~S. Shively, R.~Kohn, and S.~Wood.
\newblock Variable selection and function estimation in additive nonparametric
  regression using a data-based prior.
\newblock {\em Journal of the American Statistical Association},
  94(447):777--794, 1999.

\bibitem{WS96}
W.~Stolz.
\newblock Some small ball probabilities for {G}aussian processes under
  nonuniform norms.
\newblock {\em Journal of Theoretical Probability}, 9(3):613--630, 1996.

\bibitem{stuart}
A.~M. Stuart.
\newblock Inverse problems: a {B}ayesian perspective.
\newblock {\em Acta Numer.}, 19:451--559, 2010.

\bibitem{SVZ13}
B.~T. Szab{\'o}, A.~W. van~der Vaart, and J.~H. van Zanten.
\newblock Empirical {B}ayes scaling of {G}aussian priors in the white noise
  model.
\newblock {\em Electronic Journal of Statistics}, 7:991--1018, 2013.

\bibitem{TA94}
M.~Talagrand.
\newblock The supremum of some canonical processes.
\newblock {\em American Journal of Mathematics}, 116(2):283--325, 1994.

\bibitem{HT78}
H.~Triebel.
\newblock {\em Interpolation theory, function spaces, differential operators},
  volume~18 of {\em North-Holland Mathematical Library}.
\newblock North-Holland Publishing Co., Amsterdam-New York, 1978.

\bibitem{TR10}
H.~Triebel.
\newblock {\em Bases in function spaces, sampling, discrepancy, numerical
  integration}, volume~11.
\newblock European Mathematical Society, 2010.

\bibitem{AT09}
A.~B. Tsybakov.
\newblock {\em Introduction to nonparametric estimation}.
\newblock Springer Series in Statistics. Springer, New York, 2009.
\newblock Revised and extended from the 2004 French original, Translated by
  Vladimir Zaiats.

\bibitem{VZ07}
A.~W. Van Der~Vaart and J.~H. van Zanten.
\newblock Bayesian inference with rescaled gaussian process priors.
\newblock {\em Electronic Journal of Statistics}, 1:433--448, 2007.

\bibitem{VZ08}
A.~W. van~der Vaart and J.~H. van Zanten.
\newblock Rates of contraction of posterior distributions based on gaussian
  process priors.
\newblock {\em The Annals of Statistics}, pages 1435--1463, 2008.

\bibitem{VZrkhs}
A.~W. van~der Vaart and J.~H. van Zanten.
\newblock Reproducing kernel {H}ilbert spaces of {G}aussian priors.
\newblock pages 200--222, 2008.

\bibitem{VZ09}
A.~W. van~der Vaart and J.~H. van Zanten.
\newblock Adaptive bayesian estimation using a {G}aussian random field with
  inverse gamma bandwidth.
\newblock {\em The Annals of Statistics}, 37(5B):2655--2675, 2009.

\bibitem{LZ00}
L.~H. Zhao.
\newblock Bayesian aspects of some nonparametric problems.
\newblock {\em Annals of statistics}, pages 532--552, 2000.

\end{thebibliography}
\newpage
\vspace{0.4cm}
\begin{center}{\bf SUPPLEMENTARY MATERIAL}\end{center}

\setcounter{section}{0}
\setcounter{equation}{0}
\renewcommand\thesection{\Alph{section}}

\renewcommand{\theequation}{\Alph{section}.\arabic{equation}}

{Note that in order to ease readability, in this supplementary material we use a different type of numbering for sections, results and displayed equations, compared to the main body of the article. In particular, we use letters for sections, and the letter of the section together with number, for results and displayed equations.}

\section{Proofs of Section \ref{sec:props}}

\begin{proof}[Proof of {Proposition \ref{prop:supp2}}]
By {Proposition \ref{prop:comp}} we have $\overline{\om}^{\norm{\cdot}_X}\subset\overline{\sh}^{\norm{\cdot}_X}\subset X$. For any arbitrary $x\in X$ and given $\gep>0$, there exists $N$ such that $x^N=\sum_{\ell=1}^N x_\ell \psi_\ell$ satisfies $\|x^N-x\|_X<\gep$, {where $(\psi_\ell)$ is the Schauder basis in $X$ and $(x_\ell)$ the corresponding coefficients of $x$}. Since clearly $x^N\in \sh\cap\om$, we conclude that $X=\overline{\om}^{\norm{\cdot}_X}=\overline{\sh}^{\norm{\cdot}_X}$. 

Since $\pe$ is a measure on $X$, we have ${\rm supp}(\pe)\subset X$.
On the other hand, the topological support of any Radon measure in $X$ is non-empty and by definition closed in $X$. By Proposition \ref{prop:anderson} we get that $0\in{\rm supp}(\pe)$, thus $\sh\subset {\rm supp}(\pe)$. Taking closures in $X$, we get $X=\overline{\sh}^{\norm{\cdot}_X}\subset {\rm supp}(\pe)$ and thus the claimed result.
\end{proof}

\begin{proof}[Proof of {Proposition \ref{prop:lbd}}]
By {Proposition \ref{p:shift}}, letting $V=\frac{|\cdot|^p}p$, we have 
\begin{align*}
& \pe( \epsilon B_X  +h)
= \int_{\epsilon B_X}\lim_{N\to\infty}e^{\left(\sum_{\ell=1}^N\left(V\left(\frac{u_\ell}{\gamma_\ell}\right)-V\left(\frac{u_\ell-h_\ell}{\gamma_\ell}\right)\right)\right)}\pe(du)\\
&=e^{-\sum_{\ell=1}^\infty V\left(\frac{h_\ell}{\gamma_\ell}\right)}
\int_{\epsilon B_X}\lim_{N\to\infty}e^{\sum_{\ell=1}^N\left(V\left(\frac{u_\ell}{\gamma_\ell}\right)+V\left(\frac{h_\ell}{\gamma_\ell}\right)-V\left(\frac{u_\ell-h_\ell}{\gamma_\ell}\right)\right)}\pe(du)\\
&=e^{-\sum_{\ell=1}^\infty V\left(\frac{h_\ell}{\gamma_\ell}\right)}
\int_{\epsilon B_X}\lim_{N\to\infty}\frac{1}{2}\left(e^{\sum_{\ell=1}^N\left(V\left(\frac{u_\ell}{\gamma_\ell}\right)+V\left(\frac{h_\ell}{\gamma_\ell}\right)-V\left(\frac{u_\ell-h_\ell}{\gamma_\ell}\right)\right)}\right.\\
&\hspace{45mm}\left.+e^{\sum_{\ell=1}^N\left(V\left(\frac{u_\ell}{\gamma_\ell}\right)+V\left(\frac{h_\ell}{\gamma_\ell}\right)-V\left(\frac{u_\ell+h_\ell}{\gamma_\ell}\right)\right)}\right)\pe(du),
\end{align*}
where in the last equality we used symmetry.
In the following, we show that the integrand in the last line above is bounded below by $1$. Notice that our proof applies for any $V:\R\to\R_{\geq0}$ convex and symmetric with $V(0)=0$, differentiable in $\R\setminus\{0\}$ with concave derivative on the positive axis. The functions $\frac{|\cdot|^p}p$, $1\leq p \leq 2$, clearly satisfy this assumption.

Since $\e^{a}+\e^{-a}\ge 2$ for any $a\in\R$, we observe that
\begin{align*}
&\e^{V(x)+V(y)-V(x-y)}+\e^{V(x)+V(y)-V(x+y)}\\
&\quad= \e^{V(x)+V(y)-\frac{1}{2}V(x-y)-\frac{1}{2}V(x+y)}
\left(
\e^{-\frac{1}{2}V(x-y)+\frac{1}{2}V(x+y)}+\e^{\frac{1}{2}V(x-y)-\frac{1}{2}V(x+y)}
\right)\\
&\quad\ge 2\,\e^{V(x)+V(y)-\frac{1}{2}V(x-y)-\frac{1}{2}V(x+y)}.
\end{align*}
In consequence, we need to show that
$$
G(x,y):=V(x)+V(y)-\frac{1}{2}V(x-y)-\frac{1}{2}V(x+y)\ge 0.
$$
Notice that $G$ has a number of symmetries. Namely, it satisfies
\begin{equation}
	\label{eq:g_is_symmetric}
	G(x,y) = G(-x,y) = G(x,-y) = G(y,x).
\end{equation}
Therefore, without loss of generality, we can assume that $x,y \ge 0$.  We note that if $x=0$ or $y=0$ then clearly $G(x,y)=V(0) = 0$. 
Consequently, due to \eqref{eq:g_is_symmetric} it will be sufficient to show that $\frac{\partial G}{\partial x}(x,y)\ge 0$ for any $x>0$ and $y>0$.

Let us briefly consider the derivative $R(x)=V'(x)$ for $x>0$ and define $R(0)=\lim_{x\to0^+} V'(x)$. By assumption on $V$, $V'(x)$ is concave hence continuous for all $x>0$, implying that the limit exists although it may be $-\infty$. Combining with the convexity of $V$ and since $V$ has a minimum at the origin, we get that the limit is non-negative, $R(0)\geq0$. The function $R$ defined on $[0,\infty)$ is concave with $R(0)\geq0$, hence it is subadditive. 

We first observe that 
\begin{equation*}
	G(x,x) = 2V(x)-\frac12V(2x)
\end{equation*}
and due to the subadditivity of $R$, we must have $G(x,x)\geq0$. For $x\neq y$, we have
$$
\frac{\partial G}{\partial x}(x,y)=V'(x)-\frac{1}{2}V'(x-y)-\frac{1}{2}V'(x+y).
$$
For $x> y$, by concavity of $V'$ on the positive axis, we have
$$
V'(x)=V'\Big(\frac{1}{2}(x-y)+\frac{1}{2}(x+y)\Big)\ge\frac{1}{2}V'(x-y)+\frac{1}{2}V'(x+y),
$$
implying that $\frac{\partial G}{\partial x}(x,y)\ge 0$ for $x\ge y$. If $x<y$, since by symmetry of $V$  it holds $V'(x-y)=-V'(y-x)$, we can write
$$
\frac{\partial G}{\partial x}(x,y)=V'(x)+\frac{1}{2}V'(y-x)-\frac{1}{2}V'(x+y)
$$
where the arguments of $V'$ in the right-hand side are positive and we can use the concavity of $V'$ on the positive axis. As above, using the auxiliary function $R$ and since concave functions which are non-negative at zero are subadditive, we have 
\[V'(x+y)=V'(2x+y-x)\leq 2V'(x)+V'(y-x).\] 
Thus $\frac{dG}{dx}(x;y)\geq0$ for any $y>x>0$ as well, and the result follows.
\end{proof}

\begin{proof}[Proof of {Theorem \ref{thm:cf}}]
Let $h\in \om$ such that $\norm{h-w}_X\leq \epsilon$. Then by the triangle inequality, for any $x\in X$ we have $\norm{x-w}_X\leq \epsilon +\norm{x-h}_X$, hence if $\norm{x-h}_X
\leq\epsilon$ then $\norm{x-w}_X\leq2\epsilon$. We thus have,
\[\pe(w+2\epsilon B_X)\geq \pe(h+ \epsilon B_X)\geq e^{-\frac1p\norm{h}^p_\om}\pe(\epsilon B_X),\] where for the last inequality we used {Proposition \ref{prop:lbd}}. To finish the proof we take the negative logarithm and optimize over $h\in \om$.
\end{proof}

\begin{proof}[Proof of {Proposition \ref{p:tal}}]
Without loss of generality we work in $\R^\infty$. Recall  $\gamma=(\gamma_\ell)$ and $\xi=(\xi_\ell)$ from the definition of the $p$-exponential measure $\pe$, {Definition \ref{def:pexp}}. The inequality follows from \cite[Theorem 2.4]{TA94}, see also \cite[Theorem 4.19]{ML05}. These theorems state that for the infinite (unscaled) independent product of standard $p$-exponential one-dimensional measures, $\pe_\infty$ in $\R^{\infty}$, there exists a universal constant $K>0$ depending only on $p$, such that for all $\tilde{r}>0$ \[\pe_{\infty}(A+\sqrt{\tilde{r}}B_2+\tilde{r}^\frac1pB_p)\geq1-\frac{1}{\pe_\infty(A)}\exp\left(-\frac{\tilde{r}}K\right),\] where $B_p$, $B_2$ are the closed unit balls in $\ell_p$, $\ell_2$ respectively. Letting $r=\tilde{r}^\frac1p$, we get \[\pe_{\infty}(A+r^\frac{p}2B_2+rB_p)\geq1-\frac{1}{\pe_\infty(A)}\exp\left(-\frac{r^p}K\right).\]
Defining $\Gamma: \R^\infty\to\R^\infty$, such that $x\in\R^\infty \mapsto (\gamma_\ell x_\ell)$, 
we get that for any $\pe$-measurable set $A\subset\R^{\infty}$
\begin{align*}&\pe(A+r^\frac{p}2B_\sh+rB_\om)=\rp(\Gamma \xi\in A+r^\frac{p}2B_\sh+rB_\om)=\rp(\xi\in \Gamma^{-1}(A+r^\frac{p}2B_\sh+rB_\om))\\
&=\pe_\infty( \Gamma^{-1}A+r^\frac{p}2B_2+rB_p)
\geq 1-\frac{1}{\pe_\infty(\Gamma^{-1}A)}\exp\left(-\frac{r^p}K\right)=1-\frac{1}{\pe(A)}\exp\left(-\frac{r^p}K\right).\end{align*}
\end{proof}

\section{Proof of general contraction theorem in Section \ref{sec:main}}
\begin{proof}[Proof of {Theorem \ref{t:main}}] 
Assume $\varphi_{w_0}(\epsilon_n)\leq n\epsilon_n^2$. It follows by  {Theorem \ref{thm:cf}} that
\begin{equation*}
\rp(\norm{W-w_0}_X<2\epsilon_n)
= \exp\left(\log \mu(w_0 + 2 \epsilon_n B_X)\right)
\geq \exp(-\varphi_{w_0}(\epsilon_n))\geq  e^{-n\epsilon_n^2},
\end{equation*}
and, consequently, the {claim \eqref{eq:adh24}} follows.

We now consider the existence of sets $X_n$ such that {\eqref{eq:adh22} and \eqref{eq:adh23}} hold. We set
\begin{equation}
\label{eq:def_Xn}
X_n=\epsilon_nB_X+M_n^{\frac{p}2}B_\sh+M_nB_\om,
\end{equation}
where $M_n>0$ will be chosen below. By {Proposition \ref{p:tal}}, we have 
\begin{equation}
\label{eq:t:main_aux1}
\rp(W\notin X_n) 
\leq \frac{1}{\pe(\epsilon_nB_X)}\exp\left(-\frac{M_n^p}K\right)
= \exp\left(\varphi_0(\epsilon_n)-\frac{M_n^p}K\right).
\end{equation}
Next, for any $C>1$, we denote 
$M_n=(K(C+1)n\epsilon_n^2)^\frac1p$
which is bounded away from zero for all $n$ by assumption. 
Since 
\begin{equation}
\label{eq:t:main_aux2}
\varphi_0(\epsilon_n)\leq \varphi_{w_0}(\epsilon_n)\leq n\epsilon_n^2
\end{equation}
we obtain the {claim \eqref{eq:adh23}} by combining \eqref{eq:t:main_aux1} with \eqref{eq:t:main_aux2}.

\mods{For the final {claim \eqref{eq:adh22}}, we cannot use directly {Proposition \ref{prop:lbd}} to bound the complexity of $X_n$, since {Proposition \ref{prop:lbd}} refers to shifts in $\om$ while $X_n$ involves a ball in $\sh$.  We can however, find a large enough ball $\overline{M}_n\om$ which is such that a $2\epsilon_n$-cushion in $X$ around it contains $X_n$.  We can then use {Proposition \ref{prop:lbd}} to bound the complexity of $2\epsilon_nB_X+ \overline{M}_n B_\om$, which in turn implies a bound on the complexity of $X_n$.}

Define
\begin{equation*}
\overline{M}_n=2\left(M_n\vee (1+\frac1n)f(M_n^{\frac{p}2})^\frac1pg(\epsilon_n)^{\frac1p-\frac12}\right).
\end{equation*}
Then {using \eqref{eq:inf}} we can show that \begin{equation}\label{eq:bigball}X_n\subset 2\epsilon_nB_X+\overline{M}_nB_\om. \end{equation} Indeed, for every $x\in {M_n}^{\frac{p}2}B_\sh$, we have {by \eqref{eq:inf}} 
that 
\begin{equation*}
	\inf_{z\in\om:\norm{z-x}_X\leq\epsilon}\norm{z}_\om^p\leq f(M_n^\frac{p}2)g(\epsilon)^{1-\frac{p}2}
\end{equation*}
and, in consequence, there exists 
$y\in \left(1+\frac1n \right){f(M_n^{\frac{p}2})}^{\frac{1}p}g(\epsilon_n)^{\frac1p-\frac12}B_\om$, 
with $\norm{x-y}_X \leq \epsilon_n$. \mods{The term $1+1/n$ does not play any significant role, and can be replaced by any constant over $1$.}
Thus any $x\in{M_n}^{\frac{p}2}B_\sh+M_nB_\om$  is within $\epsilon_n$ $\norm{\cdot}_X$-distance from some point in $\overline{M}_nB_\om$ and \eqref{eq:bigball} follows. 

Let $h_1,\dots,h_N\in \overline{M}_nB_\om$ be $2\epsilon_n$-apart in $\norm{\cdot}_X$.  Clearly, the balls $h_j+\epsilon_nB_X$ are disjoint and hence by {Proposition \ref{prop:lbd}} we obtain
\begin{multline}\label{pf:adh}
1 \geq \sum_{j=1}^N\rp(W\in h_j+\epsilon_n B_X) 
 \geq \sum_{j=1}^N e^{-\frac{\norm{h_j}^p_{\om}}p} \rp(W\in\epsilon_nB_X)
 \geq  N e^{-\frac{\overline{M}_n^p}p-\varphi_0(\epsilon_n)}.
\end{multline} 
If the set of points $h_1,\dots, h_N$ is maximal in $\overline{M}_nB_\om$ (that is, it achieves the maximum number of points $2\epsilon_n$-apart in $\norm{\cdot}_X$ that can fit in $\overline{M}_nB_\om$), then the balls $h_j+2\epsilon_nB_X$ cover $\overline{M}_nB_\om$
and combining with \eqref{eq:bigball} we get that \begin{equation}\label{eq:t:main_aux4}X_n\subset  \bigcup_{j=1}^N (h_j+4\epsilon_nB_X).\end{equation}

Combining \eqref{pf:adh} together with \eqref{eq:t:main_aux4} we obtain
 \[N(4\epsilon_n,X_n,\norm{\cdot}_X)
 \leq N \leq \exp\left(\frac{\overline{M}_n^p}p + \varphi_0(\epsilon_n)\right).\] 
Using the definitions of $M_n,\overline{M}_n$ we get
\begin{align*}
&\log N(4\epsilon_n,X_n,\norm{\cdot}_X)\\
&\leq \frac{2^p}p\left(K(C+1)n\epsilon_n^2\vee(1+\frac1n)^pf(\sqrt{K(C+1)}n^\frac12\epsilon_n)g(\epsilon_n)^{1-\frac{p}2}\right)+n\epsilon_n^2.\end{align*}
Finally, using that $n\epsilon_n^2\gtrsim 1$ and $f$ is non-decreasing with $f(a)\to\infty$ at most polynomially as $a\to\infty$, we {get \eqref{eq:adh22}} . This completes the proof.
\end{proof}

\section{Proofs of Section \ref{sec:l2}}
\begin{proof}[Proof of Lemma \ref{lem:bessup}]
{The proof is very similar to the proof of \cite[Theorem 5]{DS15}, taking into account {Proposition \ref{prop:01}}. We include it for the reader's convenience.}

We first show that for $s<\alpha$, it holds $\pe(B^s_q)=1$. Indeed, we have 
\[\E_\pe\norm{u}^q_{B^s_q}=\E\sum_{\ell=1}^\infty \ell^{q(\frac{s}d+\frac12)-1}|\gamma_\ell|^q|\xi_\ell|^q=\E|\xi_1|^q\sum_{\ell=1}^\infty\ell^{\frac{q(s-\alpha)}d-1}.\] If $s<\alpha$, then the expectation is finite, hence $\mu(B^s_q)=1$.

We next show that if $\pe(B^s_q)=1$ then $s<\alpha$. If $\mu(B^s_q)=1$ then $\norm{u}_{B^s_q}<\infty$ almost surely with respect to $\pe$, hence 
\begin{equation}\label{eq:bessup1}\sum_{\ell=1}^\infty \ell^{\frac{q(s-\alpha)}d-1}|\xi_\ell|^q<\infty, \quad\text{almost surely}.\end{equation}
By contradiction to the Law of Large Numbers, we then get that 
\begin{equation}\label{eq:bessup2}s<\alpha+\frac{d}q.\end{equation} Define $\zeta_\ell=\ell^{\frac{q(s-\alpha)}d-1}|\xi_\ell|^q$, which are independent non-negative random variables. By \cite[Proposition 4.14]{OV06}, \eqref{eq:bessup1} also implies that  \begin{equation}\label{eq:bessup3}\sum_{\ell=1}^\infty \E[\zeta_\ell \wedge 1]<\infty.\end{equation}
We have 
\begin{align*}
\E \zeta_\ell=\E[\zeta_\ell\mathbbm{1}_{\zeta_\ell\leq1}]+\E[\zeta_\ell\mathbbm{1}_{\zeta_\ell>1}]
&\leq \E[\zeta_\ell\wedge 1]+I_\ell,
\end{align*}
where 
\begin{equation*}
I_\ell=\E \left[\ell^{\frac{q(s-\alpha)}d-1}|\xi_\ell|^q\mathbbm{1}_{\{|\xi_\ell|>\ell^{\frac{\alpha-s}d+\frac1q}\}}\right]=c_p\ell^{\frac{q(s-\alpha)}d-1}\int_{\ell^{\frac{\alpha-s}d+\frac1q}}^\infty x^qe^{-\frac{x^p}p}dx,
\end{equation*}
where $c_p$ depends only on $p$.
Since $p,q\geq1$, it holds that $x^qe^{-\frac{x^p}p}\leq C_1e^{-C_2x},$ for constants $C_1, C_2>0$ sufficiently large and small, respectively. This results in the bound \[I_\ell\leq c_p\frac{ C_1}{C_2}\ell^{\frac{q(s-\alpha)}d-1}\exp(-C_2\ell^{\frac{\alpha-s}d+\frac1q}):=\iota_\ell,\] where by \eqref{eq:bessup2} $\iota_\ell$ are summable. Combining, we get that
\[\sum_{\ell=1}^\infty \E[\zeta_\ell]\leq \sum_{\ell=1}^\infty\E[\zeta_\ell\wedge1]+\sum_{\ell=1}^\infty \iota_\ell<\infty.\] We thus have \[\sum_{\ell=1}\E[\zeta_\ell]=\E[|\xi_1|^q]\sum_{\ell=1}^\infty \ell^{\frac{q(s-\alpha}d-1}<\infty,\] and therefore $s<\alpha$.

Finally, {Proposition \ref{prop:01}} implies that $\pe(B^s_q)=0,$ for $s\geq \alpha$.
\end{proof}


\begin{proof}[Proof of Proposition \ref{prop:ubl2}]
We examine each case separately.
\begin{enumerate}
\item[i)]For $q\geq 2$,
by {Lemmas \ref{lem:l2sb} and \ref{lem:infl2}}, we have that as $\epsilon\to0$ the concentration function satisfies
\begin{equation}\label{eq:concl2}
	\varphi_{w_0}(\epsilon) \lesssim
	\begin{cases}
		  \epsilon^{\frac{\beta p-\alpha p-d}{\beta}} + \epsilon^{-\frac d\alpha}, & {\rm for }\; \beta < \alpha + \frac dp, \\
		 {(-\log \epsilon)^{\frac{q-p}q} }+ \epsilon^{-\frac d\alpha},
		& {\rm for }\; \beta = \alpha + \frac dp, \\
		1 + \epsilon^{-\frac d\alpha}, & {\rm for }\; \beta > \alpha + \frac dp. \\
	\end{cases}
\end{equation}
Determining which term dominates in each case, we find  
\begin{equation}\label{eq:concl22}
	\varphi_{w_0}(\epsilon) \lesssim
	\begin{cases}
		  \epsilon^{\frac{\beta p-\alpha p-d}{\beta}},  & {\rm for }\; \alpha \geq \beta,  \\
	          \epsilon^{-\frac d\alpha}, & {\rm for }\; \alpha < \beta . \\
	\end{cases}
\end{equation}
Computing the minimal solution $\epsilon_n$ such that  $\varphi_{w_0}(\epsilon_n)\leq n\epsilon_n^2,$  we arrive at $\epsilon_n\asymp r_n^{\alpha,\beta,p, q}$. 
\item[ii)]For $q<2$ and $p\leq q$,
by {Lemmas \ref{lem:l2sb} and \ref{lem:infl2}}, we have that as $\epsilon\to0$ the concentration function satisfies
\begin{equation}\label{eq:concl23}
	\varphi_{w_0}(\epsilon) \lesssim
	\begin{cases}
		  \epsilon^{\frac{2\beta pq-2\alpha pq-2qd}{2\beta q+qd-2d}} + \epsilon^{-\frac d\alpha}, & {\rm for }\; \beta < \alpha + \frac dp, \\
		 {(-\log \epsilon)^{\frac{q-p}q} }+ \epsilon^{-\frac d\alpha}
		& {\rm for }\; \beta = \alpha + \frac dp, \\
		1 + \epsilon^{-\frac d\alpha}, & {\rm for }\; \beta > \alpha + \frac dp. \\
	\end{cases}
\end{equation}
In this case, determining which of the two terms dominates for $\beta<\alpha+\frac{d}p$ is a bit more complicated and leads to a quadratic equation for the value of $\alpha$ balancing the two terms. This quadratic equation has two solutions, a negative one which is rejected since $\alpha>0$ and $\alpha=\frac{\beta p -d +a}{2p}$, where $a$ as in the statement of the lemma. Notice that the expression under the square root in $a$ is positive by the assumption on $\beta$. We thus have the following bound:
\begin{equation}\label{eq:concl24}
	\varphi_{w_0}(\epsilon) \lesssim
	\begin{cases}
		  \epsilon^{\frac{2\beta pq-2\alpha pq-2qd}{2\beta q+qd-2d}}, & {\rm for }\; \alpha\geq \frac{\beta p -d+a}{2p}, \\
		\epsilon^{-\frac d\alpha},
		& {\rm for }\; \alpha< \frac{\beta p -d+a}{2p}. \\	\end{cases}
\end{equation}
Computing the minimal solution $\epsilon_n$ such that  $\varphi_{w_0}(\epsilon_n)\leq n\epsilon_n^2,$  we again arrive at $\epsilon_n\asymp r_n^{\alpha,\beta,p, q}$. 
\item[iii)]For $q< 2$, thus $p>q$, the proof is similar to (ii) above, using the expressions corresponding to this case from {Lemma \ref{lem:infl2}.}
\end{enumerate}
Finally, notice that the constants in all the used upper bounds on the concentration function from {Lemmas \ref{lem:l2sb} and \ref{lem:infl2}}, depend on $w_0$ only through its $B^\beta_q$ norm, hence so do the constants in the rates derived above.
\end{proof}



\begin{proof}[Proof of Proposition \ref{prop:rescaledrate}]
In all of the studied cases except when $\alpha=\beta-\frac{d}p$ for $q>p$, {Lemmas \ref{lem:l2sb} and \ref{lem:infl2}} result in a bound of the form
\begin{equation*}
\varphil_{w_0}(\epsilon)\lesssim \lambda^{-p}\epsilon^{-s}+(\epsilon/\lambda)^{-\frac{d}\alpha},
\end{equation*}
for some $s=s(\alpha, \beta, p, q)\geq0$. We optimize the choice of $\lambda$ by balancing the two terms, obtaining the choice
\begin{equation}\label{eq:lamo}\lambda\asymp\epsilon^{\frac{d-\alpha s}{d+\alpha p}}.\end{equation}
We then equate the resulting bound on $\varphil_{w_0}(\epsilon)$ with $n\epsilon^2$, thus getting 
 \begin{equation}\label{eq:epsdef}\epsilon_n\asymp n^{-(2+\frac{dp+ds}{d+\alpha p})^{-1}}.\end{equation}
The last rate coincides with the rate $m_n=n^{-\frac{\beta}{d+2\beta}}$, if and only if \[s=s_0(\alpha,\beta,p):=\frac{d+\alpha p -\beta p}{\beta}.\] The corresponding optimal $\lambda$ is 
 \begin{equation}\label{eq:lam}\lambda_n\asymp n^{\frac{\alpha-\beta}{d+2\beta}}.\end{equation}For $s>s_0,$ the rate $\epsilon_n$ is polynomially slower than $m_n$, and this holds for the optimal and hence any choice of $\lambda_n$. {The case $s<s_0$ does not arise, as seen below (it cannot arise as it would lead to a faster rate than the minimax rate $m_n$).} 
The proof thus proceeds by comparing the values of $s$ obtained in {Lemma \ref{lem:infl2}} to $s_0$.

Recall that $q<2$.
If $q\leq p$ and $\alpha\geq \beta-\frac{d}q$, we have 
\begin{equation}\label{eq:s}s=\frac{2\alpha pq+2pd-2\beta p q}{2\beta q+dq-2d}.\end{equation} It holds 
\begin{equation*}
	s-s_0=\frac{dq(\beta-\alpha-d/q)(p-2){+}(d^2+2\alpha d)(p-q)}{\beta(2\beta q+d q-2d)}\geq0,
\end{equation*}
since both numerator and denominator are positive by our assumptions.
The difference $s-s_0$ vanishes if and only if $\alpha=\beta-\frac{d}q$ and $q=p$, in which case $\epsilon_n\asymp m_n$ for $\lambda_n \asymp n^{-\frac{d}{p(d+2\beta)}}.$
In all other cases, for any $\lambda_n$, $\epsilon_n$ is polynomially slower than $m_n$.

If $\alpha<\beta-\frac{d}p$ for any relationship between $q$ and $p$, we have $s=0>s_0$, hence for any $\lambda_n$, $\epsilon_n$ is polynomially slower than $m_n$. 

If $q>p$ and $\alpha>\beta-\frac{d}p$, then $$s=\frac{2\alpha p q+ 2qd-2\beta p q}{2\beta q+q d-2d}.$$ It holds $$s-s_0=\frac{(\beta-\alpha-\frac{d}p)(q-2)dp}{\beta(2\beta q+qd-2d)}>0,$$ 
hence for any $\lambda_n$, $\epsilon_n$ is polynomially slower than $m_n$. 

We next turn to the case $q>p$ for $\alpha=\beta-\frac{d}p$ (recall $q<2$). 
By {Lemmas \ref{lem:l2sb} and \ref{lem:infl2}} we have the bound
\begin{equation*}
\varphil_{w_0}(\epsilon)\lesssim \lambda^{-p}(\log1/\epsilon)^{\frac{q-p}q}+(\epsilon/\lambda)^{-\frac{d}\alpha}.
\end{equation*}
We again optimize the choice of $\lambda$ by balancing the two terms, to find 
\begin{equation}\label{eq:lamlog}\lambda\asymp \epsilon^{\frac{d}{\beta p}}(\log1/\epsilon)^{\frac{q-p}{q}\frac{\beta p -d}{\beta p^2}}.\end{equation}
The resulting bound on $\varphil_{w_0}(\epsilon)$ is \[\varphil_{w_0}(\epsilon)\lesssim (\log1/\epsilon)^{\frac{d(q-p)}{\beta qp}}\epsilon^{-\frac{d}\beta}.\]
We equate this bound with $n\epsilon^2$, obtaining the equation
\[\epsilon_n^{\frac{d+2\beta}{\beta}}\log^{-\frac{(q-p)d}{\beta qp}}(1/\epsilon_n)=n^{-1}.\]
 We then solve for $\epsilon$ using \cite[Lemma 3]{MP03}, included {below as Lemma \ref{lem:MP} for the reader's convenience}. This gives the claimed value of $\bar{r}_n$ and we can in turn compute the value of $\lambda_n$, by plugging $\epsilon_n\asymp \bar{r}_n$ in \eqref{eq:lamlog}.
It holds that $\omega>0$ (see the expression for $\lambda_n$ in the statement), since $\alpha=\beta-\frac{d}p>0$.

{Finally, we return to the case $q<p$ and determine the best achievable rate. If $\alpha\leq \beta-\frac{d}q$, then $s=0$ and by \eqref{eq:epsdef}, the rate is \[\epsilon_n\asymp n^{-(2+\frac{dp}{d+\alpha p})^{-1}},\] which is (uniquely) optimized when we choose $\alpha$ as large as possible, $\alpha=\beta-\frac{d}q.$ Plugging this choice of $\alpha$ into the last expression and \eqref{eq:lamo}, we obtain the claimed values for $\bar{r}_n$ and $\lambda_n$, respectively. It remains to verify that for $\alpha>\beta-\frac{d}q$ the resulting rate  $\epsilon_n$ in \eqref{eq:epsdef} is not better than the aforementioned $\bar{r}_n$. Indeed, comparing $\epsilon_n$ and $\bar{r}_n$, we conclude that this is the case if and only if the following inequality holds \[s\geq p^2\frac{\alpha q+d-\beta q}{dq-dp+p\beta q},\] with equality if and only if the two rates coincide. Using the expression for $s$ from \eqref{eq:s}, we get that the last inequality is equivalent to 
\[\frac{2}{2\beta q+d q-2d}\geq \frac{p}{\beta p q+dq-dp},\] where notice that the two denominators are positive since $\beta\geq \frac{d}q-\frac{d}2$ and $p\leq2$. It is then straightforward to see that the last inequality is equivalent to $p\leq2$ and so indeed the rate for $\alpha>\beta-\frac{d}q$ is strictly worse than $\bar{r}_n$ when $p<2$, while for $p=2$ the rates are identical for any $\alpha\geq\beta-\frac{d}q$.}

As always, notice that the constants in all the used upper bounds on the concentration function from {Lemmas \ref{lem:l2sb} and \ref{lem:infl2}}, depend on $w_0$ only through its $B^\beta_q$ norm, hence so do the constants in the rates derived above.
\end{proof}



\begin{proof}[Proof of Lemma \ref{lem:infl2}]
Let $w_0:=(w_{0,\ell})_{\ell\in\N}\in B^\beta_q$, so that under the assumption on $\beta$ it holds that $w_0\in\ell_2$, {see Lemma \ref{lem:emb} below}. Since $w_0\in B^\beta_q$ we find that
\begin{equation}
	\label{eq:asymp_of_w_ell}
	|w_{0,\ell}| \leq\norm{w_0}_{B^\beta_q} \ell^{-\frac\beta d-\frac 12+\frac1q}.
\end{equation}

Consider now approximations $h_{1:L} = (w_1, ..., w_L, 0, ...) \in \R^\infty$ of $w_0$, where $L\in \N$. Obviously, we have $h_{1:L} \in \oma$ for any $L$. We first study how large $L$ needs to be, in order to have
\begin{equation}
	\label{eq:h_L_condition}
	\norm{h_{1:L} - w_0}_{\ell_2} \leq \epsilon.
\end{equation}
In case $q>2$, we have
\begin{align*}
\norm{h_{1:L}-{w_0}}^2_{\ell_2}&=\sum_{\ell>L}\ell^{-\frac{2\beta}d-1+\frac2q} \ell^{\frac{2\beta}d+1-\frac2q}w_{0,\ell}^2 \\
&\leq\left(\sum_{\ell>L}\ell^{\frac{\beta q}d+\frac{q}2-1}|w_{0,\ell}|^q\right)^\frac2q
 \left(\sum_{\ell>L}\ell^{-\frac{2\beta q}{d(q-2)}-1}\right)^{\frac{q-2}q}
\leq \norm{w_0}_{B^\beta_q}^2 L^{-{\frac{2\beta}d}},
\end{align*}
where we have used the H\"older inequality $(\frac{q}2, \frac{q}{q-2})$ and comparison of the sum to an integral. 
If $q=2$, we obtain
\begin{equation*}
	\norm{h_{1:L}-{w_0}}^2_{\ell_2}=\sum_{\ell>L}\ell^{\frac{2\beta}d}w_{0,\ell}^2 \ell^{-\frac{2\beta}d}
	\leq  L^{-\frac{2\beta}d} \norm{w_0}_{B^\beta_q}^2.
\end{equation*}
If $q<2$, applying \eqref{eq:asymp_of_w_ell} yields
\begin{align*}
	\norm{h_{1:L}-{w_0}}^2_{\ell_2}&=\sum_{\ell>L}|w_{0,\ell}|^q|w_{0,\ell}|^{2-q}\leq\norm{w_0}^{2-q}_{B^\beta_q} \sum_{\ell>L}|w_{0,\ell}|^q\ell^{\frac{\beta q}d+\frac{q}2-1}\ell^{-\frac{2\beta}d-1+\frac{2}q}\\&\leq  L^{\frac2q-1-\frac{2\beta}d}\norm{w_0}^2_{B^\beta_q},
\end{align*} where we used the assumption on $\beta$ and where all the constants depend on $w_0$ only through its $B^\beta_q$-norm.

For \eqref{eq:h_L_condition} to hold with minimal $L\in\N$, we choose $L$ as 
\begin{equation}\label{eq:Lchoice}L=c 
	\begin{cases}	
		\epsilon^{-\frac{d}\beta}, & {\rm if }\; q\geq2, \; \\		
		\epsilon^{\frac1{\frac1q-\frac12-\frac{\beta}d}},  & {\rm if }\; q<2, 
\end{cases}\end{equation} where the constant $c$ depends on $w_0$ only through its $B^\beta_q$-norm.

Testing $h_{1:L} \in \oma$  in the infimum we aim to bound, yields
an upper bound 
\begin{equation*}
	I(\epsilon) := \inf_{h\in\oma:\norm{h-{w_0}}_{\ell_2}\leq\epsilon}\norm{h}_{\oma}^p
	\leq \norm{h_{1:L}}_{\oma}^p
	=\sum_{\ell\leq L} \ell^{\frac{p}2+\frac{\alpha p}d}  |w_{0,\ell}|^p.
\end{equation*} 

\begin{itemize}
\item[i)] For $q\leq p$, we use \eqref{eq:asymp_of_w_ell} to get
\begin{align}\label{eq:qleqp} \nonumber
\norm{h_{1:L}}^p_{\oma}&=\sum_{\ell\leq L}\ell^{\frac{p}2+\frac{\alpha p}d}|w_{0,\ell}|^p=\sum_{\ell\leq L}
\ell^{\frac{\beta q}d+\frac{q}2-1}|w_{0,\ell}|^q|w_{0,\ell}|^{p-q}\ell^{\frac{p}2+\frac{\alpha p}d -\frac{\beta q}d-\frac{q}2+1}\\\nonumber
&\leq\norm{w_0}_{B^\beta_q}^{p-q}\sum_{\ell\leq L}\ell^{\frac{\beta q}d+\frac{q}2-1}|w_{0,\ell}|^q\ell^{\frac{(\alpha-\beta)p}d+\frac{p}q}\\
&  \le  
	\begin{cases}	
		\norm{w_0}_{B^\beta_q}^p, & {\rm if }\; \beta \geq\alpha + \frac dq, \; \\
	        \norm{w_0}_{B^\beta_q}^pL^{\frac{(\alpha-\beta)p}d+\frac{p}q},& {\rm if }\; \beta <\alpha + \frac dq. \; \\
\end{cases}
\end{align}

\item[ii)] For $q> p$, we use H\"older inequality with $(\frac{q}p,\frac{q}{q-p})$ to bound
\begin{align}\label{eq:qlargerp} \nonumber
\norm{h_{1:L}}^p_{\oma}&=\sum_{\ell\leq L}\ell^{\frac{p}2+\frac{\alpha p}d}|w_{0,\ell}|^p=\sum_{\ell\leq L}
\ell^{\frac{\beta p}d+\frac{p}2-\frac{p}q}|w_{0,\ell}|^p\ell^{\frac{\alpha p}d -\frac{\beta p}d+\frac{p}q}\\ \nonumber
&\leq\left(\sum_{\ell\leq L}\ell^{\frac{\beta q}d+\frac{q}2-1}|w_{0,\ell}|^q\right)^{\frac{p}q}\left(\sum_{\ell\leq L}\ell^{(\frac{\alpha p}d -\frac{\beta p}d+\frac{p}q)\frac{q}{q-p}}\right)^{\frac{q-p}q}\\
&  \leq  
	\begin{cases}	
		\norm{w_0}_{B^\beta_q}^p, & {\rm if }\; \beta \geq\alpha + \frac dp, \; \\
		\norm{w_0}_{B^\beta_q}^p(\log L)^{\frac{q-p}q}, & {\rm if }\; \beta =\alpha + \frac dp, \; \\
	        \norm{w_0}_{B^\beta_q}^pL^{\frac{\alpha p-\beta p+d}d},& {\rm if }\; \beta <\alpha + \frac dp. \; \\
\end{cases}
\end{align}

\end{itemize}
Combining the bounds \eqref{eq:qleqp} and \eqref{eq:qlargerp} with the choice of $L$ from \eqref{eq:Lchoice} completes the proof.
 \end{proof}



\begin{proof}[Proof of Lemma \ref{lem:l2dom}]
We first show that $f$ and $g$ {defined by \eqref{eq:def_f_and_g}} 
satisfy {\eqref{eq:inf}.} 
Let $h=(h_\ell)\in aB_{\sha}$ and define
$x_{1:L}=(h_1, ..., h_L, 0, ...)$ for the smallest $L = L(\epsilon;a) \in \N$ such that
$\gamma_L \leq \frac{\epsilon}{a}.$ Such $L$ satisfies
$$L=\lceil a^{\frac {2d}{d+2\alpha}} \epsilon^{-\frac {2d}{d+2\alpha}}\rceil\leq  a^{\frac {2d}{d+2\alpha}} \epsilon^{-\frac {2d}{d+2\alpha}}+1.$$
Clearly, $x_{1:L}\in \oma$ and we obtain 
\begin{equation*}
\norm{h-x_{1:L}}_{\ell_2}^2 \leq \gamma_L^2 \sum_{\ell >L}\gamma_\ell^{-2} h_\ell^2 \leq \gamma_L^2 \norm{h}^2_{\sha}
 \leq \epsilon^2.
\end{equation*}
Consequently, by applying the H\"older inequality for $\left(\frac2p, \frac2{2-p}\right)$ we obtain
\begin{eqnarray*}
\inf_{x\in \oma:\norm{h-x}_{\ell^2}\leq \epsilon}\norm{x}_{\oma}^p
& \leq & \sum_{\ell\leq L}\gamma_\ell^{-p} |h_\ell|^p \\
& \leq & L^{1-\frac{p}2}\left(\sum_{\ell \leq L} \gamma_\ell^{-2} h_\ell^2\right)^{\frac{p}2} \\
& \leq & (a^{\frac {2d}{d+2\alpha}} \epsilon^{-\frac {2d}{d+2\alpha}}+1)^{1-\frac{p}2}a^p \\
& \leq & 2^{1-\frac{p}2}(a^{\frac {2d}{d+2\alpha}} \epsilon^{-\frac {2d}{d+2\alpha}}\vee1)^{1-\frac{p}2}a^p \\
& \leq & 2^{1-\frac{p}2}(a^\frac{2d-pd}{d+2\alpha}\vee1)(\epsilon^{-\frac{2d-pd}{d+2\alpha}}\vee1)a^p\\
& = & f(a)g(\epsilon)^{1-\frac{p}2}.
\end{eqnarray*}

For the second part of the claim, let $\epsilon_n=r_n^{\alpha,\beta,p,q}$ as in {Proposition \ref{prop:ubl2}} 
and observe that since $n\epsilon_n^2>1$ and $p\leq2$ we have $$f(n^\frac12\epsilon_n)=n^\frac{d+\alpha p}{d+2\alpha}\epsilon_n^{\frac{2d+2\alpha p}{d+2\alpha}}$$ and since $\epsilon_n<1$ we have $g(\epsilon_n)=2\epsilon_n^{-\frac{2d}{d+2\alpha}}$. For $p=2$ the claim holds trivially since  $f(n^\frac12\epsilon_n)=n\epsilon_n^2$ and $g(\epsilon_n)^{1-\frac{p}2}=1$. For $p\in[1,2)$, first notice that \[f(n^\frac12\epsilon_n)g(\epsilon_n)^{1-\frac{p}2}=2^{1-\frac{p}2}\epsilon_n^pn^{\frac{d+p\alpha}{d+2\alpha}},\] so that $f(n^\frac12\epsilon_n)g(\epsilon_n)^{1-\frac{p}2}\lesssim n\epsilon_n^2$ is equivalent to \begin{equation}\label{eq:optbnd}\epsilon_n\gtrsim n^{-\frac{\alpha}{d+2\alpha}}.\end{equation}
This is always true, since in the proof of {Proposition \ref{prop:ubl2}} 
we computed $\epsilon_n$ by finding an upper bound, say $B(\epsilon_n)$, on the concentration  function (see for example \eqref{eq:concl2}), and then solving $B(\epsilon_n)=n\epsilon_n^2$. Since $B(\epsilon_n)\geq \epsilon^{-\frac{d}\alpha}$, we have that \eqref{eq:optbnd} holds. 
\end{proof}



\begin{proof}[Proof of Lemma \ref{lem:l2domresc}]
Fix $a, \epsilon>0$ and let $h\in aB_{\shl}$. We need to show that $\f, \g$ satisfy the bound
\begin{equation}\label{eq:cmpl}\inf_{x\in\oml:\norm{h-x}_{\ell_2}\leq \epsilon}\norm{x}_{\oml}^p\leq \f(a)\g(\epsilon)^{1-\frac{p}2}.\end{equation}
Noticing that $a B_{\shl}=\lambda a B_{\sha}$ and using {Lemma \ref{lem:l2dom}}, we get that
\[\inf_{x\in \oml:\norm{h-x}_{\ell_2}\leq \epsilon }\norm{x}^p_{\oml}=\inf_{x\in\om:\norm{h-x}_{\ell_2}\leq\epsilon}\lambda^{-p}\norm{x}_{\oma}^p\leq \lambda^{-p}f(\lambda a)g(\epsilon)^{1-\frac{p}2}=\f(a)\g(\epsilon)^{1-\frac{p}2},\] hence \eqref{eq:cmpl} is indeed satisfied.

For the second part of the claim, let $\epsilon_n=\bar{r}_n$, for $\bar{r}_n$ as in {Proposition \ref{prop:rescaledrate}}
and observe that since $n\epsilon_n^2>1$ and $p\leq2$ we have $\f(n^\frac12\epsilon_n)=\lambda_n^{\frac{(2-p)d}{d+2\alpha}}n^\frac{d+\alpha p}{d+2\alpha}\epsilon_n^{\frac{2d+2\alpha p}{d+2\alpha}}$ and since $\epsilon_n<1$ we have $g(\epsilon_n)=2\epsilon_n^{-\frac{2d}{d+2\alpha}}$. 

For $p=2$ the claim holds trivially since  $\f(n^\frac12\epsilon_n)=n\epsilon_n^2$ and $\g(\epsilon_n)^{1-\frac{p}2}=1$. For $p\in[1,2)$, first notice that \[\f(n^\frac12\epsilon_n)\g(\epsilon_n)^{1-\frac{p}2}=2^{1-\frac{p}2}\lambda_n^{\frac{(2-p)d}{d+2\alpha}}\epsilon_n^pn^{\frac{d+p\alpha}{d+2\alpha}},\] so that $\f(n^\frac12\epsilon_n)\g(\epsilon_n)^{1-\frac{p}2}\lesssim n\epsilon_n^2$ is equivalent to \begin{equation}\label{eq:optbndl}\epsilon_n\gtrsim n^{-\frac{\alpha}{d+2\alpha}}\lambda_n^{\frac{d}{d+2\alpha}}.\end{equation}
This is always true, since in the proof of {Proposition \ref{prop:rescaledrate}} we computed $\epsilon_n$ by finding an upper bound, say $B(\epsilon_n,\lambda_n)$, on the concentration  function, and then solving $B(\epsilon_n,\lambda_n)=n\epsilon_n^2$. Since $B(\epsilon_n,\lambda_n)\geq \epsilon_n^{-\frac{d}\alpha}\lambda_n^{\frac{d}\alpha}$, we have that \eqref{eq:optbndl} holds. 
\end{proof}


\section{Proofs of Section \ref{sec:l8}}
\begin{proof}[Proof of Proposition \ref{prop:hol}] 
The proof follows the techniques of the proof of  \cite[Corollary 5]{DS15} taking into account the form of the Schauder basis functions $\psi_{kl}$.  In particular recall that $\psi_{kl}$ are $\vartheta$-H\"older continuous with $\vartheta\le S\wedge 1$, $S>\alpha$. Denote by $\kappa_n(W)$ the $n$th cumulant of a random variable $W$. Let $u\sim\pe$. Since the odd cumulants of centered random variables are zero and the cumulants are additive for independent random variables, we have for any integer $q\geq1$ and any $x,y\in[0,1], x\neq y$
\begin{align*}
&|\kappa_{2q}(u(x)-u(y))|  =  \left|\sum_{k=1}^\infty\sum_{l=1}^{2^k}\kappa_{2q}(\xi_{kl})2^{-(1+2\alpha)qk}(\psi_{kl}(x)-\psi_{kl}(y))^{2q}\right|\\
&\leq  C_q\sum_{k=1}^\infty\sum_{l=1}^{2^k}2^{-(1+2\alpha)qk}|\psi_{kl}(x)-\psi_{kl}(y)|^{2q} \\
&{\leq C_q\sum_{k=1}^\infty\min\Big\{  \sum_{l=1}^{2^k}2^{2(\vartheta-\alpha)qk}|x-y|^{2q \vartheta}, 2^{-(1+2\alpha) q k}\sum_{l=1}^{2^k}(|\psi_{kl}(x)|^{2q}+|\psi_{kl}(y)|^{2q})   \Big\}}\\
&{\leq C_q\sum_{k=1}^\infty \min\Big\{  2^k 2^{2(\vartheta-\alpha)qk}|x-y|^{2q \vartheta}, 2^{-2\alpha q k}   \Big\}}\\
&{\leq   C_q\sum_{\ell=1}^\infty \min\Big\{\ell^{2(\vartheta-\alpha)q}|x-y|^{2q \vartheta},\ell^{-2\alpha q-1}\Big\}}\\
 &{\leq  C_q\left(|x-y|^{2q \vartheta}\int_{1}^{|x-y|^{\frac{-2q\vartheta}{1+2q\vartheta}}}\ell^{2(\vartheta-\alpha)q}d\ell+\int_{|x-y|^{\frac{-2q\vartheta}{1+2q\vartheta}}}^\infty \ell^{-2\alpha q-1}d\ell\right)} \\
&{ \leq  C_q |x-y|^{\frac{4\alpha \vartheta q^2}{1+2q\vartheta}}},
\end{align*} 
where $C_q$ is a constant depending only on ($q, p$),  changing from line to line. {For 
the second inequality we used the $\vartheta$-H\"older continuity of $\psi_{kl}$ as described in (21) 
and the bound $(a+b)^{2q}\leq 2^{2q-1}(a^{2q}+b^{2q}),$ for any $a, b\in\R$. For the third inequality we used (22), which in turn implies that 
\[\norm{\sum_{l=1}^{2^k}|\psi_{kl}|}_{L_\infty}\leq C_22^{\frac{k}2},\] by letting $x^\ast\in \argmax_{x}\sum_{l=1}^{2^k}|\psi_{kl}(x)|$ and taking $u_{kl}=\sign\{\psi_{kl}(x^\ast)\}$.}

 Since the random variables $u(x)$ are centered, all moments of even order $2q, q\geq1$, can be written as homogeneous polynomials of the even cumulants of order up to $2q$, hence \[\E|u(x)-u(y)|^{2q}\leq C_q|x-y|^{\frac{4\alpha \vartheta q^2}{1+2q\vartheta}},\]
uniformly for $x,y\in[0,1].$ The result follows from Kolmogorov's continuity theorem, since we can choose $q$ arbitrarily large, see  {\cite[Corollary 4]{DS15}}.
\end{proof}

\begin{proof}[Proof of Proposition \ref{prop:supnorm}]
As a first step we generalize \cite[Lemma 2.1]{WS96} which holds for standard jointly normal variables, to the case of independent $p$-exponential variables with $p\in[1,2]$. Due to independence, we can use the product rule instead of Sidak's inequality. The lemma immediately generalizes due to the estimates in {Lemma \ref{lem:stolz} below}.

To get the result we then follow the proof of \cite[Theorem 1.3]{WS96}.  For $u$ drawn from an $\alpha$-regular $p$-exponential measure, it holds by  \eqref{eq:supnorm} 
 \[\norm{u}_{L_\infty}\leq c\sum_{k=0}^\infty2^{\frac{k}2}\sup_{1\leq l\leq 2^k}|u_{kl}|\leq c \sum_{k=0}^\infty2^{-\alpha k}\sup_{1\leq l\leq 2^k}|\xi_{kl}|.\] For $\epsilon>0$, let $n$ be an integer such that \[\frac{2}{1-2^{-\alpha/2}}2^{-\alpha n}\leq \epsilon<\frac{2}{1-2^{-\alpha/2}}2^{-\alpha(n-1)}.\] Define \[b_k:=\left\{\begin{array}{ll}2^{\frac32(k-n)\alpha}, & 
\;\mbox{if  $k<n$} 
                                     \\ 2^{\frac12(k-n)\alpha}, & \;\mbox{if  $k\geq n$}
                                    \end{array}\right.\] and notice that \[\sum_{k=0}^\infty2^{-\alpha k}b_k\leq \frac{2}{1-2^{-\alpha/2}}2^{-\alpha n}.\] Then, if $|\xi_{kl}|\leq b_k$ for all $k\geq 0$, $1\leq l\leq 2^k$, we have that $\norm{u}_{L_\infty}\leq\epsilon$. Therefore, by \cite[Lemma 2.1]{WS96} we have \[\rp(\norm{u}_{L_\infty}\leq \epsilon)\geq \exp(-C2^n)\] and the proof is complete since $2^n$ is of order $\epsilon^{-\frac1\alpha}$. 
\end{proof}

\begin{proof}[Proof of Lemma \ref{lem:infl8}]
Let $w_{kl}$ be the coefficients of $w_0$ in the wavelet basis $\psi_{kl}$. 
Consider  $h_{1:K}\in C[0,1]$ with coefficients $h_{kl}=w_{kl}$ for $k\leq K$ and $1\leq l\leq 2^k$ and $h_{kl}=0$ for $k>K$. Then $h_{1:K}\in\oma$ for any $K\in\N$ and  \[\norm{h_{1:K}-{w_0}}_{L_\infty}\leq c\sum_{k>K}2^{\frac{k}2}\sup_{1\leq l \leq 2^k}|w_{kl}|\leq c\norm{w_0}_{B^\beta_{\infty\infty}}\sum_{k>K}2^{-\beta k}\leq c\norm{w_0}_{B^\beta_{\infty\infty}}2^{-\beta K},\] for $c>0$ a changing constant independent of $w_0$ and where for the first inequality we used \eqref{eq:supnorm}. 
Choosing $K\in\N$ minimal so that $\epsilon\geq c\norm{w_0}_{B^\beta_{\infty\infty}}2^{-\beta K}$, we get $\norm{h_{1:K}-w_0}_{L_\infty}<\epsilon$. Then 
\begin{align*}
&\inf_{h\in \oma:\norm{h-w_0}_{L_\infty}\leq\epsilon}\norm{h}_{\oma}^p\leq\norm{h_{1:K}}^p_{\oma} =\sum_{k\leq K}\sum_{1\leq l\leq 2^k}|w_{kl}|^p2^{p(\frac12+\alpha)k} 2^{-\beta k}2^{\beta k}\\
&
\leq \norm{w_0}^p_{B^\beta_{\infty\infty}}
\sum_{k\leq K}\sum_{1\leq l\leq 2^k}2^{p(\alpha-\beta)k}=\norm{w_0}^p_{B^\beta_{\infty\infty}}\sum_{k\leq K}2^{(p(\alpha-\beta)+1)k}.
\end{align*}
The sum on the right hand side converges as $K\to\infty$ if $\beta>\alpha+\frac1p$, for $\beta= \alpha+\frac1p$ blows-up as $K\asymp\log(1/\epsilon)$ and for $\beta<\alpha+\frac1p$ blows-up as  $2^{K((\alpha-\beta)p+1)}\asymp\epsilon^{\frac{\beta p -\alpha p-1}\beta}$. 
\end{proof}

\begin{proof}[Proof of Lemma \ref{lem:l8dom}]
Let $h=(h_{kl})\in aB_{\sha}$ 
and define the approximation $x_{1:K}=\sum_{k\leq K}\sum_{l=1}^{2^k}h_{kl}\psi_{kl}$ for $K$ to be determined below. We have that $x_{1:K}\in{\oma}, \forall K\in \N$ and by \eqref{eq:supnorm} 
and Cauchy-Schwarz inequality, we get the bound
\begin{align*}&\norm{h-x_{1:K}}_{L_\infty}
\leq c\Bigg(\sum_{k=K+1}^\infty 2^{-2\alpha k}\Bigg)^\frac12\Bigg(\sum_{k=1}^\infty 2^{(1+2\alpha)k} \sup_{1\leq l\leq 2^k}|h_{kl}|^2\Bigg)^\frac12\leq c2^{-\alpha K}\norm{h}_{\sha},\end{align*} where $c>0$ is a constant with a value that changes below, dependent only on the Schauder basis, $\alpha$ and later on $p$.
For $K=K(\epsilon;a)\in\N$ minimal such that $\epsilon\geq ca 2^{-\alpha K}$, we have that $\norm{h-x_{1:K}}_{L_\infty}\leq\epsilon$. This $K$  satisfies $K=\lceil\log_2(ca^{\frac1\alpha}\epsilon^{-\frac{1}{\alpha}}) \rceil\leq \log_2(ca^{\frac1\alpha}\epsilon^{-\frac{1}{\alpha}})+1$, hence $2^K\leq 2ca^\frac1\alpha\epsilon^{-\frac1\alpha}$ and by H\"older inequality we have the bound 
\begin{align*}\inf_{x\in {\oma}:\norm{h-x}_{\infty}\leq\epsilon}\norm{x}^p_{\oma}&\leq \norm{x_{1:K}}^p_{\oma}= \sum_{k\leq K}\sum_{l=1}^{2^k}\frac{|h_{kl}|^p}{{\gamma_{kl}^p}}
\leq (2^K-1)^{1-\frac{p}2}\norm{h}_{\sha}^p\leq c\epsilon^{-\frac{2-p}{2\alpha}}a^{\frac{2-p+2\alpha p}{2\alpha}}.\end{align*}
Therefore,  $f(a)=ca^\frac{2-p+2\alpha p}{2\alpha}$ and $g(\epsilon)=\epsilon^{-\frac{1}{\alpha}}$ satisfy \eqref{eq:inf} 
and the proof is complete.
\end{proof}



\section{Shift spaces of scaled independent product measures}
\begin{proposition}\label{p:shiftgen}
Let $\nu$ be the law of the scaled sequence $(\gamma_\ell \xi_\ell)_{\ell\in\N}$, where $\xi_\ell$ independent and identically distributed univariate random variables and $\gamma=(\gamma_\ell)$ deterministic decaying sequence of positive numbers. Assume that the common distribution of $\xi_\ell$, has finite Fisher information and variance and has a density $\rho_\ell$ with respect to the Lebesgue measure which is everywhere positive and continuous. Then for any $h\in\R^\infty$ it holds that the translated measure $\nu_h$ and $\nu$ are either singular or equivalent. The shift space of the measure ${\nu}$ is 
\[\sh(\nu)=\{h\in \R^\infty: \sum_{\ell=1}^\infty h_\ell^2\gamma_\ell^{-2}<\infty\}.\]
Furthermore, letting $\rho_{\ell, h_\ell}=\rho_\ell(\cdot-h_\ell)$, we have for $h\in \sh(\nu)$, \[\frac{d\nu_h}{d\nu}(u)=\lim_{N\to\infty}\prod_{\ell=1}^N \frac{d\rho_{\ell,h_\ell}}{d\rho_\ell}(u_\ell) \quad\quad in \;\;{L^1(\R^\infty,\nu)}.\]
\end{proposition}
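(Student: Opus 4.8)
The plan is to reduce the statement to the classical Kakutani dichotomy for infinite products of probability measures, the only nontrivial analytic input being a two-sided estimate, in terms of $h_\ell^2\gamma_\ell^{-2}$, for the Hellinger affinity between the $\ell$-th factor of $\nu$ and its translate by $h_\ell$.

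First I would remove the scaling. The map $\Gamma\colon\R^\infty\to\R^\infty$, $\Gamma x=(\gamma_\ell x_\ell)_{\ell}$, is a Borel bijection with Borel inverse, and $\nu=\Gamma_*\mu_0$ where $\mu_0=\bigotimes_{\ell}\mu_\ell$ with $\mu_\ell=\rho_\ell\,dx$; moreover $\nu_h=\Gamma_*\big(\bigotimes_\ell\mu_\ell(\cdot-k_\ell)\big)$ with $k=\Gamma^{-1}h=(h_\ell\gamma_\ell^{-1})_\ell$. Since $\Gamma$ is a bijection, equivalence, mutual singularity, and the Radon--Nikodym derivative of $\nu_h$ with respect to $\nu$ are obtained from the corresponding statements for the unscaled product measures by transport through $\Gamma$, and $h\in\sh(\nu)\Leftrightarrow\sum_\ell h_\ell^2\gamma_\ell^{-2}<\infty\Leftrightarrow k\in\ell_2$. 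Because each $\rho_\ell$ is everywhere positive, $\mu_\ell(\cdot-k_\ell)\sim\mu_\ell$ for every $\ell$, with one-dimensional density $\rho_\ell(\cdot-k_\ell)/\rho_\ell(\cdot)$, so Kakutani's theorem applies and yields at once the dichotomy: the two product measures are either equivalent or mutually singular, and they are equivalent if and only if $\prod_\ell A_\ell>0$, i.e.\ $\sum_\ell(1-A_\ell)<\infty$, where
\[
A_\ell=\int_\R\sqrt{\rho_\ell(x)\,\rho_\ell(x-k_\ell)}\,dx
\]
is the Hellinger affinity. In the equivalent case Kakutani (via martingale convergence) also gives that $d\nu_h/d\nu$ is the $\mu_0$-a.s.\ and $L^1(\mu_0)$ limit of the partial products $\prod_{\ell\le N}\rho_\ell(\cdot-k_\ell)/\rho_\ell(\cdot)$, the $L^1$ convergence following from uniform integrability of this nonnegative martingale since its limit has full mass $1$; transporting back through $\Gamma$ produces the displayed formula.

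It remains to show, using that $\rho_\ell\equiv\rho$ has finite Fisher information, that $\sum_\ell(1-A_\ell)<\infty$ if and only if $\sum_\ell k_\ell^2<\infty$. Finite Fisher information is equivalent to $\sqrt\rho\in H^1(\R)$, and since $1-A_\ell=\tfrac12\bigl\|\sqrt\rho(\cdot-k_\ell)-\sqrt\rho\bigr\|_{L^2}^2$, Plancherel gives $1-A_\ell=2\int_\R\sin^2(k_\ell\xi/2)\,|\widehat{\sqrt\rho}(\xi)|^2\,d\xi$. Using $\sin^2(t/2)\le t^2/4$ yields the upper bound $1-A_\ell\le\tfrac12 k_\ell^2\,\|(\sqrt\rho)'\|_{L^2}^2$, so $\sum_\ell k_\ell^2<\infty$ implies $\sum_\ell(1-A_\ell)<\infty$. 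For the converse I would establish the matching lower bound $1-A_\ell\ge c\min(k_\ell^2,1)$ with $c>0$ depending only on $\rho$. Fix $t_0>0$ small enough that $\int_{|\xi|\le\pi/t_0}\xi^2|\widehat{\sqrt\rho}(\xi)|^2\,d\xi\ge\tfrac12\int_\R\xi^2|\widehat{\sqrt\rho}(\xi)|^2\,d\xi>0$ (the last integral is positive because $\sqrt\rho$ is not constant). For $|k_\ell|\le t_0$, restricting the frequency integral to $|\xi|\le\pi/|k_\ell|$ and using $\sin^2(k_\ell\xi/2)\ge(k_\ell\xi/\pi)^2$ there gives $1-A_\ell\gtrsim k_\ell^2\int_{|\xi|\le\pi/t_0}\xi^2|\widehat{\sqrt\rho}|^2\,d\xi\gtrsim k_\ell^2$. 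For $|k_\ell|\ge t_0$, one uses that $t\mapsto A(t):=\int_\R\sqrt{\rho(x)\rho(x-t)}\,dx$ is continuous, is $<1$ for every $t\ne0$ (equality in Cauchy--Schwarz would force $\rho$ to be $t$-periodic, impossible for an integrable probability density), and tends to $0$ as $|t|\to\infty$ (weak $L^2$-convergence of translates), hence $\sup_{|t|\ge t_0}A(t)<1$. Splitting the indices according to whether $|k_\ell|\ge1$ or $|k_\ell|<1$, the lower bound then forces $\sum_\ell(1-A_\ell)=\infty$ whenever $\sum_\ell k_\ell^2=\infty$, i.e.\ mutual singularity in that case.

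I expect the main obstacle to be precisely this uniform two-sided Hellinger estimate, and within it the large-$|t|$ lower bound $\inf_{|t|\ge t_0}(1-A(t))>0$, which rests on a soft compactness/decay argument rather than on Fisher information; the remaining ingredients are the textbook facts that $I(\rho)<\infty\Leftrightarrow\sqrt\rho\in H^1(\R)$ and that translation is $L^2$-continuous on $H^1$, together with Kakutani's dichotomy and the martingale identification of the density. (The finite-variance hypothesis is not actually needed for this argument.)
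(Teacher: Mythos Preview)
Your proof is correct and shares the overall architecture of the paper's argument: both reduce to Kakutani's dichotomy for product measures, use everywhere-positivity of $\rho$ to get equivalence of the one-dimensional factors, and identify the Radon--Nikodym density via the $L^1$-convergent partial products. The difference lies in how the criterion $\sum_\ell(1-A_\ell)<\infty\Leftrightarrow\sum_\ell k_\ell^2<\infty$ is established. The paper does not prove this directly but invokes results of Shepp and of Agapiou--Kekkonen (building on \cite{LAS65}), which give two separate criteria: a \emph{sufficient} condition for singularity in terms of the variances ($\sum_\ell h_\ell^2/\sigma_\ell^2=\infty$) and a \emph{necessary} condition for singularity in terms of the Fisher informations ($\sum_\ell h_\ell^2 I_\ell=\infty$); after scaling, both reduce to $\sum_\ell h_\ell^2\gamma_\ell^{-2}$. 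Your approach instead proves a direct two-sided estimate $c\min(k_\ell^2,1)\le 1-A_\ell\le Ck_\ell^2$ via the identity $1-A_\ell=\tfrac12\|\sqrt{\rho(\cdot-k_\ell)}-\sqrt\rho\|_{L^2}^2$ and Fourier analysis of $\sqrt\rho\in H^1(\R)$, with a soft compactness argument for large shifts. This is more self-contained and, as you observe, shows that the finite-variance hypothesis is not needed; the paper's route is shorter because it outsources the analytic work to the cited references, at the cost of carrying an extra assumption.
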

\begin{proof}
The positivity and continuity assumption on the density of $\xi_\ell$, secures that for each $\ell$ we have that $\rho_\ell$ and the translate $\rho_{\ell, h_\ell}=\rho_\ell(\cdot-h_\ell)$ are equivalent. Hence by the Kakutani Theorem \cite[Theorem 2.12.7]{Boga_gaussian} $\nu$ and $\nu_h$ are either singular or equivalent.

The rest of the proof relies on \cite[Section 1]{AK03} which builds on \cite{LAS65}. In these papers it is shown that if $Z=(Z_1, Z_2, \dots)$ is a sequence of independent of random variables with variance $0<\sigma_j^2<\infty$, then a sufficient condition for $Z$ and $Z+\alpha$, where $\alpha=(\alpha_1, \alpha_2, \dots)$, to be singular is that $\sum_{j=1}^\infty\alpha_j^2\sigma_j^{-2}=\infty$. If in addition the 
Fisher information $I_j$ of $Z_j$ is finite for all $j$, then a necessary condition for $Z$ and $Z+\alpha$,  to be mutually singular is that $\sum_{j=1}^\infty \alpha_j^2I_j=\infty.$

In our assumed setting, since the Fisher information of $\gamma_\ell \xi_\ell$ is $I_\ell=\gamma_\ell^{-2} I$ where $I$ is, the assumed to be finite, Fisher information of $\xi_\ell$, and  since $\var(\gamma_\ell \xi_\ell)=\gamma_\ell^2\var(\xi_\ell)$, we have that the necessary and sufficient condition for the singularity of $\nu$ with $\nu_h$ is $\sum_{\ell=1}^\infty h_\ell^2\gamma_\ell^{-2}=\infty.$ Since $\nu$ and $\nu_h$ are either singular or equivalent, the shift space is as claimed.

The Radon-Nikodym derivative follows again from Kakutani theorem in the form presented in \cite[Theorem 2.7]{GDP06}, noting that in \cite[Section 1]{AK03} it is shown that the Hellinger integral $H(\nu, \nu_h)$ is positive when $\sum_{\ell=1}^\infty h_\ell^2I_\ell<\infty$. 
\end{proof}


\section{Estimates for the univariate $p$-exponential distribution}
\begin{lemma}\label{lem:stolz}
Let $\xi\sim f_p(x)$, where $f_p(x)\propto\exp(-\frac{|x|^p}p), x\in\R$, $p\in[1,2]$. Then there exist constants $0<r_1<1$ and $r_2>0$ depending only on $p$, such that 
\[\rp(|\xi|\leq x)\geq \left\{\begin{array}{ll}r_1x, & {\rm if}
\;\mbox{$0\leq x\leq 1$} 
                                     \\ \exp(-r_2\exp(-\frac1p x^p)), & {\rm if} \;\mbox{$x> 1.$}\end{array}\right.\]
\end{lemma}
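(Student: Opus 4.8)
The plan is to work directly with the density $f_p(x)=c_p\exp(-|x|^p/p)$, where the normalising constant $c_p=\bigl(2\int_0^\infty e^{-t^p/p}\,dt\bigr)^{-1}$ depends only on $p$, and to treat the two ranges of $x$ separately.

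For $0\le x\le1$ I would bound the density from below on the interval $[-x,x]\subseteq[-1,1]$, on which $t^p/p\le 1/p$: this gives $\rp(|\xi|\le x)=2\int_0^x c_p e^{-t^p/p}\,dt\ge 2c_p e^{-1/p}\,x$, so the first bound holds with $r_1:=2c_p e^{-1/p}$. That $0<r_1<1$ is automatic, since $2c_p e^{-1/p}\le 2c_p\int_0^1 e^{-t^p/p}\,dt=\rp(|\xi|\le1)<1$, the last inequality because $f_p$ is strictly positive outside $[-1,1]$.

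For $x>1$ the main ingredient is a tail estimate for $f_p$. Because $p\ge1$, for $t\ge x$ we have $t^{p-1}\ge x^{p-1}\ge1$, and since $\frac{d}{dt}\bigl(-e^{-t^p/p}\bigr)=t^{p-1}e^{-t^p/p}$ this yields $\int_x^\infty e^{-t^p/p}\,dt\le x^{1-p}\int_x^\infty t^{p-1}e^{-t^p/p}\,dt=x^{1-p}e^{-x^p/p}\le e^{-x^p/p}$, hence $\rp(|\xi|\le x)\ge 1-2c_p e^{-x^p/p}$. To convert this into the stated doubly-exponential lower bound I would set $u_0:=2c_p e^{-1/p}$, already known to lie in $(0,1)$; for every $x>1$ the quantity $u:=2c_p e^{-x^p/p}$ then lies in $[0,u_0]$, on which $u\mapsto -\log(1-u)/u$ is increasing, so $1-u\ge e^{-cu}$ with $c:=-\log(1-u_0)/u_0>0$. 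Applying this with $u=2c_p e^{-x^p/p}$ gives $\rp(|\xi|\le x)\ge \exp\bigl(-2c_p c\, e^{-x^p/p}\bigr)$, so the second bound holds with $r_2:=2c_p c=-e^{1/p}\log\bigl(1-2c_p e^{-1/p}\bigr)$, which depends only on $p$.

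I do not expect a genuine obstacle here; the one point that needs care is the compatibility of the two regimes, i.e.\ verifying that the constant $2c_p e^{-1/p}$, which controls the hardest case $x\downarrow1$ of the second range, is strictly below $1$ --- this is precisely what lets the elementary inequality $1-u\ge e^{-cu}$ be applied uniformly for all $x>1$.
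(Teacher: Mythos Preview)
Your proof is correct. The case $0\le x\le 1$ is handled exactly as in the paper, though your verification of $r_1<1$ via $2c_pe^{-1/p}\le 2c_p\int_0^1 e^{-t^p/p}\,dt=\rp(|\xi|\le1)<1$ is arguably cleaner than the paper's, which computes $r_1=e^{-1/p}p^{-1/p}/\Gamma(1+1/p)$ explicitly and asserts this is below $1$ for $p\in[1,2]$.

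For $x>1$ your route genuinely differs from the paper's. The paper defines $g_p(x)=\rp(|\xi|\le x)-\exp(-r_2e^{-x^p/p})$, picks $r_2$ large enough that $g_p(1)>0$, and then argues via the sign structure of $g_p'$ that $g_p$ has at most one critical point on $[1,\infty)$ (necessarily a maximum), so together with $g_p(1)>0$ and $g_p(\infty)=0$ one gets $g_p\ge0$. You instead obtain the explicit tail bound $\rp(|\xi|>x)\le 2c_pe^{-x^p/p}$ by the standard trick $\int_x^\infty e^{-t^p/p}\,dt\le\int_x^\infty t^{p-1}e^{-t^p/p}\,dt$, and then convert $1-u\ge e^{-cu}$ via monotonicity of $u\mapsto-\log(1-u)/u$. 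Your approach is more elementary, yields an explicit constant $r_2=-e^{1/p}\log(1-2c_pe^{-1/p})$, and avoids any derivative analysis; the paper's argument is slightly more indirect but does not require the tail estimate. Both are short and self-contained.
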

\begin{proof}
For $x\leq 1$ we have \[\rp(|\xi|\leq x)=2\int_0^x c_pe^{-\frac{t^p}p}dt\geq 2\int_0^x c_p e^{-1/p}dt\geq r_1 x,\]
where $r_1=2c_pe^{-1/p}=\frac{e^{-\frac1{p}}p^{-\frac1p}}{\Gamma(1+\frac1p)}<1$ for $p\in[1,2]$.

For $x>1$, let \[g_p(x)=\rp(|\xi|\leq x)-\exp(-r_2\exp(-\frac1p x^p))=\frac{\int_0^xe^{-\frac{t^p}p}dt}{\int_0^\infty e^{-\frac{t^p}p}dt}-\exp(-r_2\exp(-\frac1p x^p)),\]
for some $r_2>0$ large enough, so that $g_p(1)>0$. Such an $r_2$ exists since the first term in $g_p(1)$ is fixed and positive and the second term is decreasing to zero as $r_2$ grows. The derivative of $g_p(x)$ is 
\[\frac{d}{dx}g_p(x)= e^{-\frac{x^p}p}\left(\frac{1}{\int_0^\infty e^{-\frac{t^p}p}dt}-r_2 x^{p-1}\exp(-r_2\exp(-\frac1p x^p))\right).\] The term inside the parenthesis, as $x\geq1$ grows, starts from a possibly positive value and is monotonically decreasing, eventually becoming negative. This means that   the derivative $\frac{d}{dx}g_p(x)$, as $x\geq1$ grows starts from a possibly positive value and eventually becomes negative too, and thus has at most one root which corresponds to at most a unique critical point of $g_p(x), x\geq1$, which if exists is a maximum. Noting that $\lim_{x\to+\infty}g_p(x)=0$, and since $g_p(1)>0$, we get that $g_p(x)\geq0, \forall x>1$ and the proof is complete.
\end{proof}

\section{Other technical results}\label{sec:tech}

\begin{lemma}\label{lem:emb}
Let $q\geq1, d\in\N$ and $\beta>\frac{d}q-\frac{d}2$. Then $B^\beta_q\subset \ell_2$.
\end{lemma}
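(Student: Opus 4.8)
The natural move is to rewrite the defining series of $B^\beta_q$ in terms of the weights $w_\ell := \ell^{\frac{\beta}{d}+\frac12-\frac1q}$, so that (by Definition \ref{defn:besov}) $u\in B^\beta_q$ is equivalent to $(w_\ell u_\ell)_{\ell\in\N}\in\ell_q$, since $(w_\ell|u_\ell|)^q=\ell^{q(\frac{\beta}{d}+\frac12)-1}|u_\ell|^q$. I would then split into two cases according to whether $q\le 2$ or $q>2$.

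First, suppose $q\le 2$. By the elementary inclusion $\ell_q\subset\ell_2$ (with $\|\cdot\|_{\ell_2}\le\|\cdot\|_{\ell_q}$), the sequence $(w_\ell u_\ell)_\ell$ also lies in $\ell_2$. The hypothesis $\beta>\frac dq-\frac d2$ says precisely that the exponent $\frac{\beta}{d}+\frac12-\frac1q$ is nonnegative, hence $w_\ell\ge 1$ for every $\ell\ge 1$; therefore $|u_\ell|^2\le (w_\ell u_\ell)^2$ for all $\ell$, and summing gives $u\in\ell_2$.

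Next, suppose $q>2$. Then $\frac dq-\frac d2<0<\beta$, so the hypothesis is automatic (recall $B^\beta_q$ is only defined for $\beta>0$), and I would use Hölder's inequality instead. Writing $|u_\ell|^2=(w_\ell u_\ell)^2\,w_\ell^{-2}$ and applying Hölder with conjugate exponents $\frac q2$ and $\frac{q}{q-2}$,
\[
\sum_{\ell=1}^\infty |u_\ell|^2\le\Big(\sum_{\ell=1}^\infty (w_\ell u_\ell)^q\Big)^{\frac2q}\Big(\sum_{\ell=1}^\infty w_\ell^{-\frac{2q}{q-2}}\Big)^{\frac{q-2}q}.
\]
The first factor is finite because $u\in B^\beta_q$. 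For the second, a short computation of the exponent gives $w_\ell^{-\frac{2q}{q-2}}=\ell^{-1-\frac{2q\beta}{d(q-2)}}$, which is summable exactly because $\beta>0$. Hence $u\in\ell_2$, completing the proof.

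There is no genuine obstacle here; the only thing to watch is the bookkeeping of exponents, so that the auxiliary series in the Hölder step in the case $q>2$ is summable — and this is precisely where $\beta>0$ is used, just as $\beta\ge \frac dq-\frac d2$ (equivalently $w_\ell\ge 1$) is what is used in the case $q\le 2$.
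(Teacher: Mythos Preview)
Your proof is correct. For $q>2$ your H\"older argument is essentially the same as the paper's (same exponents, same splitting), and your remark that summability of the auxiliary series in this range comes from $\beta>0$ rather than from $\beta>\tfrac dq-\tfrac d2$ is in fact sharper than what the paper states at that step.

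For $q\le 2$ your route is genuinely different and cleaner: you use the inclusion $\ell_q\subset\ell_2$ together with the fact that the weights $w_\ell=\ell^{\frac\beta d+\frac12-\frac1q}$ are at least $1$ under the hypothesis, so that $\|u\|_{\ell_2}\le\|(w_\ell u_\ell)\|_{\ell_2}\le\|(w_\ell u_\ell)\|_{\ell_q}=\|u\|_{B^\beta_q}$. The paper instead splits $|u_\ell|^2=|u_\ell|^q|u_\ell|^{2-q}$ and controls the factor $|u_\ell|^{2-q}$ via the pointwise bound $|u_\ell|\le\|u\|_{B^\beta_q}\ell^{-\frac\beta d-\frac12+\frac1q}$. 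Both arguments use exactly the assumption $\beta\ge\tfrac dq-\tfrac d2$ (strict in the lemma, but only the non-strict version is needed here), but yours avoids the coefficient-by-coefficient estimate and yields the tidy norm inequality $\|u\|_{\ell_2}\le\|u\|_{B^\beta_q}$ directly. The paper's approach, on the other hand, is the same manipulation used elsewhere (e.g.\ in Lemma~\ref{lem:infl2}), so it keeps the style uniform across proofs.
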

\begin{proof}
Let $w:=(w_\ell)_{\ell\in\N}\in B^\beta_q$. For $q=2$ the claim is trivially true. If $q>2$, then by H\"older inequality for $(\frac q2, \frac q{q-2})$, we have 
\[\norm{w}_{\ell_2}^2=\sum_{\ell=1}^\infty w_\ell^2\ell^{2(\frac{\beta}d+\frac12)-\frac2q}\ell^{\frac2q-2(\frac{\beta}d+\frac12)}\leq\norm{w}_{B^\beta_q}^2\left(\sum_{\ell=1}^\infty\ell^{\frac{2q}{q-2}(\frac1q-\frac{\beta}d-\frac12)}\right)^{\frac{q-2}q},\]
where the last sum is finite if and only if $\beta>\frac{d}q-\frac{d}2$. 
If $q<2$,
\[\norm{w}_{\ell_2}^2=\sum_{\ell=1}^\infty w_\ell^2=\sum_{\ell=1}^\infty|w_\ell|^q\ell^{\frac{\beta q}d+\frac{q}2-1}\ell^{-\frac{\beta q}d-\frac{q}2+1}|w_\ell|^{2-q}\leq c\norm{w}_{B^\beta_q}^q,\] provided $\ell^{-\frac{\beta q}d-\frac{q}2+1}|w_\ell|^{2-q}\leq c$. Using that $w\in B^\beta_q$, we have 
\[|w_\ell| \leq\norm{w}_{B^\beta_q} \ell^{-\frac\beta d-\frac 12+\frac1q}.\]
The last estimate gives
\[ \ell^{-\frac{\beta q}d-\frac{q}2+1}|w_\ell|^{2-q}\leq \norm{w}_{B^\beta_q}^{2-q}\ell^{-\frac{2\beta}d-1+\frac2q},\] which is bounded for $\beta\geq \frac{d}q-\frac{d}2.$ 
\end{proof}

\begin{lemma}[Lemma 3 \cite{MP03}]\label{lem:MP}
Given $a, b>0$ consider the functions
\[r_{a,b}(s):=s^a\log^{-b}(1/s), \;0<s<1,\]
and \[v_{a,b}(s):=s^{1/a}\log^{b/a}(1/s^{1/a}), \;0<s<1.\]
Then we have that \[\lim_{s\to0} \frac{r_{a,b}^{-1}(s)}{v_{a,b}(s)}=1.\]
\end{lemma}

\section{Minimax and linear minimax rates and their relationship to posterior contraction rates, in the white noise model}\label{sec:disc}
Consider the estimation of a function $w\in L_2[0,1]$ observed under scaled Gaussian white noise, as in Section \ref{ssec:wn}. Let $d$ be a metric on $L_2[0,1]$, and consider the minimax risk over a class $\mathcal{F}\subset L_2[0,1]$
\begin{equation}\label{eq:minexp}\inf_{\tilde{w}_n}\sup_{w\in\mathcal{F}}\E_{P^{(n)}_w}[d(\tilde{w}_n,w)],\end{equation} where the infimum is taken over  all estimators $\tilde{w}_n$ constructed using the sample path $X^{(n)}\sim P^{(n)}_w$. \emph{The minimax rate of estimation in $d$-risk over $\mathcal{F}$}, is the fastest rate of decay $r_n$ of the minimax risk in \eqref{eq:minexp}, as $n\to\infty$. See \cite[Definition 6.3.1]{GN16} for details.

We can also consider a more general minimax framework, and in particular can embed convergence in $d$-loss in probability in the minimax framework; see \cite[Remark (2) in Section 2.1]{AT09}. In this case, we study the minimax risk
\begin{equation}\label{eq:minprob}\inf_{\tilde{w}_n}\sup_{w\in\mathcal{F}}P^{(n)}_w(d(\tilde{w}_n,w)\geq r_n).\end{equation}
 The \emph{minimax rate of estimation in $d$-loss in probability over $\mathcal{F}$}, is the fastest rate $r_n$ for which the minimax risk in \eqref{eq:minprob} vanishes as $n\to\infty$.
 
In both \eqref{eq:minexp} and \eqref{eq:minprob}, we can restrict the infimum to \emph{linear} estimators, in which case we have the corresponding notions of \emph{linear minimax rates}. The minimax and linear minimax rates in $L_2$-risk (as in \eqref{eq:minexp}) under Besov smoothness for function estimation in the white noise model can be found in \cite[Theorem 1]{DJ98}; see also \eqref{eq:nlminimax} and \eqref{eq:lminimax} in Section \ref{sec:l2} of the present article.

The minimax rate in $L_2$-loss \emph{in probability} (as in \eqref{eq:minprob}) is a benchmark for rates of contraction in $L_2$-loss, because if the posterior contracts at a rate $\epsilon_n$ at a $w_0\in L_2[0,1]$, then the center of the smallest ball containing at least half the posterior mass, is an estimator converging at the same rate $\epsilon_n$ in $L_2$-distance, in $P^{(n)}_{w_0}$-probability; see for example \cite[Theorem 8.7]{GV17}. 
Furthermore, the typical approach for establishing minimax rates in $L_2$-risk (more generally $d$-risk, as in \eqref{eq:minexp}) and in particular lower bounds, is by establishing a lower bound in probability and using Markov's inequality to obtain a lower bound in expectation, see for example \cite[Section 6.3.1]{GN16}. Hence, it is implicit in the typical derivation of the minimax rate in $L_2$-risk, that the same rate is also the minimax rate in $L_2$-loss in probability. This is indeed the approach used when establishing the minimax rates in $L_2$-risk for the white noise model under Besov-type smoothness, see for example \cite[Section 6.3.3]{GN16}, and hence the minimax rate $m_n$  defined in \eqref{eq:nlminimax} \emph{is} a benchmark for rates of contraction in $L_2$-loss under Besov-type smoothness.

{We next turn to the question of whether Gaussian priors are fundamentally limited by the minimax rate in $L_2$-risk over linear estimators. In the white noise model, Gaussian priors are conjugate to the Gaussian likelihood, hence the posterior is also Gaussian. By Anderson's inequality, see for example {Proposition 2.4}, the posterior mean which is a linear estimator, coincides with the center of the smallest ball containing at least half the posterior mass. Following the train of thought of the previous paragraph, one would thus expect that the contraction rates in $L_2$-loss of Gaussian priors under Besov-type smoothness cannot be faster than the linear minimax rate $l_n$ in $L_2$-risk defined in \eqref{eq:lminimax}. However, linear minimax rates in $L_2$-risk over Besov-bodies are established by directly working with $L_2$-risk and not by establishing lower bounds in probability, see \cite[Section 6]{DJ98} and \cite{DLM90}. In other words, the linear minimax rates in $L_2$-risk (as in \eqref{eq:minexp}) do not necessarily coincide with the linear minimax rates in $L_2$-loss in probability (as in \eqref{eq:minprob}), over Besov-bodies. In order to establish that Gaussian priors are fundamentally limited by the linear minimax rate $l_n$ in $L_2$-risk, one needs either to establish that $l_n$ is also the linear minimax rate in $L_2$-loss in probability (that is, to establish the corresponding lower bound in probability), or to show that, in this Gaussian-conjugate setting, posterior contraction in $L_2$-loss at a rate $\epsilon_n$ implies convergence in expected $L_2$-distance of the posterior mean at the same rate. Both of these tasks appear to be non-trivial.} 

{An alternative, more modest, approach for establishing that posterior contraction rates in $L_2$-loss for Gaussian priors under Besov-type smoothness cannot be faster than the linear minimax rate in $L_2$-risk, is to study lower bounds on the contraction rate and establish that the upper bounds obtained in this article are sharp. Lower bounds on contraction rates in $L_2$-loss for Gaussian priors under \emph{Sobolev} smoothness, have been studied in \cite[Theorem 2]{IC08}. Once more, it is not immediately obvious how this result can be generalized to obtain good lower bounds under Besov-type smoothness, since its proof relies on the weighted $\ell_2$-type structure of Sobolev spaces. }

{Although beyond the scope of the present paper, we believe that the question of whether Gaussian priors are limited by linear minimax rates in $L_2$-risk is extremely interesting: an affirmative answer would rigorously show that when interested in reconstructing spatially inhomogeneous unknown functions (that is functions in $B^\beta_q$ for $q<2$), from the point of view of contraction rates in $L_2$-loss, it is better to use a Laplace prior rather than a Gaussian prior.}

\end{document}